\documentclass[11pt,oneside,final]{amsart}
\usepackage{fullpage}
\usepackage[english]{babel}
\usepackage[T1]{fontenc}
\usepackage{mathtools}
\usepackage{enumerate}

\usepackage{etex}
\usepackage[utf8x]{inputenc}
\usepackage{amsmath,amssymb,amsfonts,amsthm}
\usepackage{mathrsfs}
\usepackage{hyperref}
\usepackage{tikz-cd}
\usepackage{ctable}

\theoremstyle{plain}
\newtheorem{thm}{Theorem}[section]

\newtheorem{lem}[thm]{Lemma}
\newtheorem{prop}[thm]{Proposition}

\theoremstyle{definition}
\newtheorem{rem}[thm]{Remark}

\newtheorem*{condition*}{Condition}

\newtheorem{example}[thm]{Example}
\newtheorem{conj}[thm]{Conjecture}
\newtheorem*{example*}{Example}
\numberwithin{equation}{section}
\numberwithin{thm}{section}

\DeclareMathOperator{\Gal}{Gal}

\DeclareMathOperator{\bfk}{\mathbf{k}}

\DeclareMathOperator{\bfX}{\mathbf{X}}

\DeclareMathOperator{\bfx}{\mathbf{x}}
\DeclareMathOperator{\bfy}{\mathbf{y}}

\DeclareMathOperator{\ZZ}{\mathbb{Z}}

\DeclareMathOperator{\disc}{disc}

\DeclareMathOperator{\Id}{Id}

\DeclareMathOperator{\GL}{GL}

\newcommand{\aff}{\mathrm{aff}}
\newcommand{\proj}{\mathrm{proj}}
\newcommand{\cov}{\mathrm{cov}}
\newcommand{\Abb}{\mathbb{A}}
\newcommand{\Pbb}{\mathbb{P}}

\newcommand{\dimP}{\dim_{\Pbb}}
\newcommand{\dimA}{\dim_{\Abb}}

\newcommand{\maps}{\rightarrow}
\newcommand{\ratlmaps}{\dashrightarrow}
\newcommand{\R}{\mathbb{R}}
\newcommand{\Q}{\mathbb{Q}}
\newcommand{\Z}{\mathbb{Z}}
\newcommand{\F}{\mathbb{F}}

\newcommand{\C}{\mathbb{C}}
\newcommand{\A}{\mathbb{A}}
\renewcommand{\P}{\mathbb{P}}
\renewcommand{\GL}{\text{GL}}

\newcommand{\calE}{\mathcal{E}}

\newcommand{\calO}{\mathcal{O}}

\newcommand{\bP}{\mathbb{P}}
\newcommand{\bx}{\mathbf{x}}

\renewcommand{\bx}{\mathbf{x}}
\newcommand{\bX}{\mathbf{X}}

\newcommand{\ep}{\varepsilon}
\newcommand{\al}{\alpha}
\newcommand{\del}{\delta}

\newcommand{\ga}{\gamma}
\newcommand{\om}{\omega}
\newcommand{\sig}{\sigma}
\newcommand{\beq}{\begin{equation}}
\newcommand{\eeq}{\end{equation}}

\newcommand{\con}{\equiv}
\newcommand{\modd}[1]{\; ( \text{mod} \; #1)}

\definecolor{pink}{rgb}{1,.2,.6}
\definecolor{orange}{rgb}{0.7,0.3,0}
\definecolor{blue}{rgb}{.2,.6,.75}
\definecolor{green}{rgb}{.4,.7,.4}
\definecolor{purple}{RGB}{127,0,255}
 \definecolor{gray}{RGB}{211,211,211}

\begin{document}
\title{Counting points in   thin sets:\\ A survey}

\author[Bonolis]{Dante Bonolis}
\address{Duke University, 120 Science Drive, Durham NC 27708}
\email{dante.bonolis@duke.edu}

\author[Pierce]{Lillian B. Pierce}
\address{Duke University, 120 Science Drive, Durham NC 27708}
\email{lillian.pierce@duke.edu}

\author[Woo]{Katharine Woo}
\address{450 Serra Mall, Building 380, Stanford CA 94305}
\email{khwoo98@stanford.edu }

\begin{abstract}

  In the 1980's Serre asked how many points of bounded height can lie in a thin set. This has motivated significant research ever since, culminating in a series of recent breakthroughs. It is a good time to take stock of the central questions that have been resolved, and also to highlight remaining open questions.  
  First, we survey recent progress on counting points of bounded height in the four types of thin sets, according to the projective/affine and type I/type II designations. Second, we turn to questions of uniformity.  Famously, in the setting of type I thin sets, the best-known upper bound for the number of points of bounded height is independent of the maximum size, say  $\|F\|$, of the coefficients of the polynomials that define the thin set; such an upper bound is called uniform. A uniform upper bound in the setting of type II thin sets is not known. For type II thin sets, we explore the dependence on $\|F\|$ via several strategies, and construct counterexamples that suggest the question of uniformity is quite subtle in the setting of type II thin sets.
\end{abstract}

\maketitle
 \vspace{-1cm}
\section{Introduction}\label{sec_introduction}
\subsection{Overview}\label{sec_overview}
Circa 1980, Serre formulated the notion of a thin set. A  subset $M \subset \mathbb{P}^{n-1}(\Q)$  is thin if there is an algebraic variety $X$ defined over $\Q$ and a morphism $\pi : X \rightarrow \mathbb{P}^{n-1}$ such that

(i) $M \subset \pi(X(\Q))$, and 

(ii) the fibre of $\pi$ over the generic point is finite and $\pi$ has no rational section over $\Q$.  

\noindent
A thin set $M \subset \mathbb{A}^n(\Q)$ is defined accordingly, with $\mathbb{P}^{n-1}(\Q)$ replaced in each instance by $\mathbb{A}^n(\Q)$.

 This notion was initially recorded in a course of J.-P. Serre in 1980-1981 at the Coll\`ege de France, which we cite here in the form \cite{Ser97}.   Serre's   motivation in that text included the study of irreducibility, and specialization of Galois groups. Indeed, if $F(Y,X_1,\ldots,X_n)$ is an irreducible polynomial over $\Q(X_1,\ldots,X_n)$, there is a thin set $M \subset \mathbb{A}^n$ such that for $(x_1,\ldots,x_n) \not\in M$, $F(Y,x_1,\ldots,x_n)$ is irreducible over $\Q$, and  there is a thin set $M' \subset \mathbb{A}^n$ such that for $(x_1,\ldots,x_n) \not\in M'$, $F(Y,x_1,\ldots,x_n)$ has the same Galois group as $F(Y,X_1,\ldots,X_n)$   over $\Q(X_1,\ldots,X_n)$ \cite[\S 9.2-9.3]{Ser97}. 

  Serre   asked how many points of bounded height can lie in a thin set, and this question has motivated significant research since the 1980's, culminating in a series of recent breakthroughs (including \cite{Sal23} and \cite{BCSSV25x}).   The first goal of the present manuscript is to survey  recent results, within the context of   the four paradigms for thin sets, according to the projective/affine and type I/type II designations. Since some  central questions within the four paradigms have recently been resolved completely, it is a good time to take stock, and also to highlight remaining open questions.

The second  goal of the present manuscript is to consider questions of uniformity: how does an upper bound for the number of points in a thin set $M$ depend on the variety $X$ that characterizes the thin set? Famously, in the setting of type I thin sets, the best-known upper bound for the number of points of bounded height is independent of the maximum size, say $\|F\|$, of the coefficients of the polynomials defining $X$; such an upper bound is called uniform. In the case of type II thin sets, no such uniform upper bound is known. We first recall an  upper bound due to Serre \cite{Ser97}, with unspecified dependence on $\|F\|$, which has served as a baseline for over 40 years.  We   refine the method of Serre  to clarify how the implicit constant may depend on $\|F\|$, in certain special cases. Then, we describe  a different  method to prove the baseline upper bound, first noted in our recent work \cite{BPW25x}; this method applies in full generality and shows there is at most polylogarithmic dependence on $\|F\|$. Finally, we construct counterexamples to a specific type of strong, uniform upper bound for  points in affine thin sets of type II, indicating that the question of uniformity may be more subtle for type II   than for type I thin sets.

\subsection{Thin sets of type I and type II: definitions}\label{sec_types}
In order to  survey recent progress in detail, we now define two types of thin set, which Serre distinguished in \cite[Ch. 9]{Ser97}, and also in \cite[\S 3.1]{Ser92}. 
 We state the definitions first in the projective setting:

\emph{Type I:} A projective thin set $M \subset \mathbb{P}^{n-1}(\Q)$   is of type I if  there is a Zariski-closed subvariety $V \subsetneq \mathbb{P}^{n-1}$   such that $M \subset V(\Q)$.

\emph{Type II:} A  projective thin set $M \subset \mathbb{P}^{n-1}(\Q)$ 
  is of type II
 if there is an irreducible projective  algebraic variety $Z$ over $\Q$ with $\dimP  Z = \dimP  (\mathbb{P}^{n-1})$, and a   dominant morphism $\pi: Z \rightarrow \mathbb{P}^{n-1}$ with generically finite fibres,  of degree $d \geq 2$,  with $M \subset \pi (Z(\Q))$.

Type I and type II affine thin sets $M \subset \mathbb{A}^n(\Q)$ are defined in an analogous way, with $\mathbb{P}^{n-1}$ replaced in each instance by $\mathbb{A}^n$, and projective replaced by affine. (Serre's presentation of thin sets in \cite[Ch. 9, 13]{Ser97} is stated over any number field;  we restrict our attention in the present survey to $\Q$.) With an  analytic number theory audience in mind, we provide a motivation for these two types, and prove the following lemma in  \S \ref{sec_classifying_two types}-\ref{sec_proof_lemma_thin_set_two_types}.
\begin{lem}\label{lemma_thin_set_two_types}
 Every thin set in $\Pbb^{n-1}(\Q)$ (respectively in $\Abb^{n}(\Q)$) is a finite union of thin sets of type I and type II.
 \end{lem}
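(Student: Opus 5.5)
We begin with a thin set $M \subset \Pbb^{n-1}(\Q)$ as in the definition: $M \subset \pi(X(\Q))$ for a variety $X$ over $\Q$ and a morphism $\pi : X \to \Pbb^{n-1}$ whose generic fibre is finite and which has no rational section. The strategy is to decompose $X$ until each piece either maps into a proper closed subset (type I) or is an irreducible variety of full dimension mapping dominantly with degree $\geq 2$ (type II). The affine case is identical, with $\Abb^n$ in place of $\Pbb^{n-1}$.

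First, we write $X = \bigcup_i X_i$ as the finite union of its irreducible components over $\Q$. These are the $\Q$-irreducible components; they may be geometrically reducible. Then $M \subset \bigcup_i \pi(X_i(\Q))$, so it suffices to treat each $X_i$ separately.

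If $\pi|_{X_i}$ is not dominant, then $\overline{\pi(X_i)} =: V_i \subsetneq \Pbb^{n-1}$ is a proper Zariski-closed subset. Hence $\pi(X_i(\Q)) \subset V_i(\Q)$ is of type I.

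If $\pi|_{X_i}$ is dominant, then finiteness of the generic fibre gives $\dim X_i = n-1$, and $\pi|_{X_i}$ is generically finite of some degree $d_i = [\Q(X_i):\Q(\Pbb^{n-1})]$.

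\begin{itemize}
\item If $X_i$ is geometrically reducible, its $\Q$-points lie in the intersection of its (Galois-conjugate) geometric components, which is a proper closed subset $W_i \subsetneq X_i$. Then $\pi(X_i(\Q)) \subset \overline{\pi(W_i)}$. Since $\dim W_i < n-1$, the latter is proper closed, giving type I.
\item If $X_i$ is geometrically irreducible and $d_i \geq 2$, we pass to a projective model. We take the closure $Z_i$ of the graph of $\pi|_{X_i}$ in $\overline{X_i} \times \Pbb^{n-1}$, where $\overline{X_i}$ is a projective compactification. This $Z_i$ is irreducible and projective, and the projection $Z_i \to \Pbb^{n-1}$ is a dominant morphism of degree $d_i \geq 2$ containing the image of $X_i(\Q)$. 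This gives type II. In the affine case we instead take the graph closure inside $\overline{X_i}\times\Abb^n$ intersected appropriately, or simply keep the affine open pieces.
\item If $d_i = 1$, then $\pi|_{X_i}$ is birational, so it has a rational inverse $\Pbb^{n-1} \dashrightarrow X_i$. This is a rational section of $\pi$, contradicting hypothesis (ii).
\end{itemize}

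Here we interpret "no rational section" as applying to $\pi$ on $X$, so that any component of degree one would produce such a section. This is the step requiring care with the definition: hypothesis (ii) must exclude degree-one dominant components. If $X$ is allowed to have a birational component, the statement would be false, so the definition must be read this way.

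The main obstacle is making the type II piece satisfy the precise definition: irreducible, of full dimension, with a morphism rather than a rational map. There are two ways to handle it. The first is to resolve the indeterminacy of $\pi$ by taking the graph closure, as above. The second is to note that $\pi$ is already a morphism on $X_i$, so we may replace $Z_i$ by $X_i$ itself if projectivity is not needed. One must also check that any $\Q$-points on the boundary added by the compactification do no harm, since they only enlarge $\pi(Z(\Q))$ and $M$ is only required to be a subset. Finiteness of the whole union follows from the finiteness of the set of irreducible components.
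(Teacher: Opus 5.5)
Your proposal is correct and follows the paper's argument: decompose $X$ into irreducible components, send the non-dominant ones to type I, and observe that a degree-one dominant component would give a rational section of $\pi$. That last point forces degree $\geq 2$, hence type II, and you justify it (via the inclusion $X_i \hookrightarrow X$) more explicitly than the paper does. Your separate case of geometrically reducible components is harmless but unnecessary, since type II only requires irreducibility over $\Q$. Your graph-closure step, which the paper omits, tacitly assumes $X_i$ is quasi-projective; this is easily arranged by shrinking to a dense open affine and placing the lower-dimensional complement into type I.
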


There is a useful polynomial interpretation of the type I/type II dichotomy, which we review in the affine setting, as in Serre \cite[\S 9.1]{Ser97}. If $M\subset \mathbb{A}^n(\Q)$ is an affine thin set of type I, then there is a nonconstant    polynomial $G \in \Q[X_1,\ldots,X_n]$   such that 
\[ M \subset \{ \bx \in \Q^n: G(x_1,\ldots,x_n)=0\}.\]
On the other hand, given an irreducible  $F(Y,X_{1},...,X_{n})\in\mathbb{Q}(X_{1},...,X_{n})[Y]$,  a polynomial in $Y$   with $\deg_{Y} F\geq 2$,  then the following set   is an affine thin set of type II:
\beq\label{affine_typeII_polynomial_interp}
  \{\bfx\in  \Q^n:  \text{$\bfx$ not a pole of any coefficient of $F$, $F(Y,\bfx)=0$ is solvable over $\Q$}\} \subset \Abb^n(\Q).\eeq
By replacing  $F(Y,\bX)$ by a multiple $\tilde{F}(Y,\bX)$ of $F$ by an appropriate polynomial in $X_1,\ldots,X_n$ so that $\tilde{F}(Y,\bX) \in \Q[Y,X_1,\ldots,X_n]$, the set depicted above is contained in the set 
\[
\{\bx \in \Q^n: \text{$\tilde{F}(Y,\bx)=0$ is solvable over $\Q$}\}.\]
Thus it is no loss of generality   to assume that within (\ref{affine_typeII_polynomial_interp}), $F$ is   a polynomial $Y,X_1,\ldots,X_n$. Moreover, modulo a thin set of type I, every thin set of type II takes the form of (\ref{affine_typeII_polynomial_interp}), in the following sense (as we prove in \S \ref{sec_proof_typeII_poly_interp}, following \cite{Ser97,Bro03N,BCSSV25x}):
 \begin{lem}\label{lemma_typeII_poly_interp}
Let $M \subset \mathbb{A}^{n}(\Q)$ be an affine thin set of type II, with corresponding irreducible affine variety $Z$ over $\Q$ with $\dimA Z=n$, and dominant morphism $\pi: Z \maps \Abb^n$ with generically finite fibres, of degree $d \geq 2$, such that $M \subset \pi(Z(\Q))$. Then there exists a proper closed subset $V \subsetneq Z$ with associated morphism $\pi_V:=\left. \pi \right|_V$, and an irreducible polynomial $F \in \Z[Y,X_1,\ldots,X_n]$  that is monic in $Y$ and has $\deg_YF=d$, such that
\[ M \subset \left(\pi_V (V) \cup \{\bx \in \Q^n: \text{$F(Y,\bx)=0$ is solvable over $\Q$}\} \right).\]
 Moreover, $\pi_V(V)$ is a thin set of type I in $\Abb^n$.
 \end{lem}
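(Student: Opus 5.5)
The plan is to use a primitive element for the function field extension. Since $\pi: Z \maps \Abb^n$ is dominant and generically finite of degree $d$, the function field $\Q(Z)$ is a finite extension of $\Q(\Abb^n)=\Q(X_1,\ldots,X_n)$ of degree $d$. It is separable because we are in characteristic zero.

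\emph{Choosing a primitive element.} Write $Z \subset \Abb^N$ with coordinate functions $z_1,\ldots,z_N$; these are regular on $Z$ and generate $\Q(Z)$ over $\Q(\bX)$. By the primitive element theorem, some $\Q$-linear combination $y=\sum_j c_j z_j$ with $c_j \in \Z$ generates $\Q(Z)$ over $\Q(\bX)$. Then $y$ is a regular function on $Z$, so it is integral over $\Q[Z]$, and its minimal polynomial $G(Y)\in \Q(\bX)[Y]$ has degree exactly $d$ and is irreducible. After multiplying by a common denominator we obtain an irreducible $G_0\in\Z[\bX][Y]$ with leading coefficient $a(\bX)$. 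To make the polynomial monic, substitute $Y \mapsto Y/a(\bX)$ and multiply by $a^{d-1}$, i.e.\ replace $y$ by $ay$. Thus we may assume, replacing $y$ by $a(\bX)y$ (still a primitive element and regular on $Z$), that $F(Y,\bX)\in\Z[Y,\bX]$ is monic in $Y$ of degree $d$. It remains irreducible in $\Q(\bX)[Y]$, and hence irreducible in $\Z[Y,\bX]$ by Gauss's lemma, since it is monic and so primitive. By construction $F(y,\pi^*\bX)=0$ identically on $Z$.

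\emph{Locating the exceptional set.} For every $z\in Z(\Q)$ we have $F(y(z),\pi(z))=0$ with $y(z)\in\Q$. So every point of $\pi(Z(\Q))$ already satisfies the solvability condition, and we could even take $V=\emptyset$. To match the statement, and to guard against the possibility that $Z$ is reducible in some chart-dependent way, I would still record the locus where the birational correspondence fails. Each $z_j$ equals $R_j(y,\bX)/b_j(\bX)$ for polynomials $R_j$ and $b_j$. Define $V\subset Z$ as the zero set of $\prod_j b_j(\pi^*\bX)\cdot \disc_Y F(Y,\pi^*\bX)$. Since $F$ is separable and the $b_j$ are nonzero in $\Q(\bX)$, this function is nonzero on the irreducible $Z$, so $V$ is a proper closed subset. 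The containment $M\subset \pi_V(V)\cup\{\bx:F(Y,\bx)=0 \text{ solvable}\}$ then follows from the previous observation.

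\emph{Showing $\pi_V(V)$ is of type I.} We have $\pi_V(V)\subset \pi(V)$, and $\dim V\le n-1$, so the Zariski closure of $\pi(V)$ is a proper closed subvariety of $\Abb^n$. Concretely, $\pi(V)$ lies in the zero set of $\prod_j b_j\cdot\disc_Y F \in \Q[\bX]$, a nonzero polynomial. Its rational points therefore form a thin set of type I.

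The main obstacle is making the monic normalization compatible with regularity on $Z$. The fix is to take $y$ to be an integral combination of the coordinates, so that it is regular, and then rescale by the leading coefficient $a(\bX)$. One must then check that the degree stays $d$, which holds because primitivity is preserved under multiplication by a nonzero element of the base field.
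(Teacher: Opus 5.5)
Your proof is correct, and it reaches a stronger conclusion than the paper's by a different route.

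\textbf{The paper's route.} The paper takes an arbitrary generator $t$ of the function field of $Z$ over $\Q(\bX)$ whose minimal polynomial $F$ is monic with coefficients in $\Z[\bX]$. It uses Lemma \ref{lemma_birational} to get a birational map $r$ from $Z$ to $W=\{F=0\}\subset\Abb^{n+1}$, and removes the locus where $r$ is not a morphism. It then needs the dimension argument of Lemma \ref{lemma_piV_thin_set} to see that $\pi_V(V)$ is of type I, with no control on its degree (Remark \ref{remark_poly_interp_consequence}). That locus can genuinely be nonempty for a badly chosen $t$: take $t=v/u$ on the cusp $v^2=u^3$ with $\pi(u,v)=u$, which is not regular at the origin.

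\textbf{Your route.} You force the primitive element to be regular on $Z$: an integral combination of the coordinates, rescaled by $a(\pi^*\bX)$. Then $z\mapsto((ay)(z),\pi(z))$ is an everywhere-defined morphism $Z\to W$ over $\Q$. Evaluating the identity $F(ay,\pi^*\bX)=0$ in $\Q[Z]$ at points of $Z(\Q)$ gives $\pi(Z(\Q))\subset\{\bx\in\Q^n: F(Y,\bx)=0 \text{ solvable over } \Q\}$ outright. So $V=\emptyset$ already satisfies the lemma, and no exceptional type I set is needed in the affine case.

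\textbf{Minor points.} The nonempty $V$ you then construct is harmless but superfluous. The worry about $Z$ being reducible ``chart-dependently'' is unfounded, since $Z$ is irreducible and affine. The remark that $y$ is ``integral over $\Q[Z]$'' is vacuous and unused. If you keep that $V$, its properness uses injectivity of $\pi^*$, i.e.\ dominance. In exchange it comes with the explicit nonzero polynomial $\prod_j b_j\cdot\disc_Y F\in\Q[\bX]$ vanishing on $\pi(V)$, which is more explicit than the paper's dimension count.

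\textbf{What the paper's approach buys.} Its birational-map formulation is the one that adapts to the projective setting of Proposition \ref{prop_BCSSV_affineII_projII}. There, $Z$ is projective and has no global regular primitive element to play the role of your $y$.
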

We remark that  the  degree of $\pi_V$ is inexplicit; see Remark \ref{remark_poly_interp_consequence}.

\subsection{The counting problems}
Our focus is on counting points of bounded height in a thin set, a suite of problems formulated explicitly by Serre  \cite[\S 13]{Ser97}.
In the projective setting, let  $M \subset \mathbb{P}^{n-1}(\Q)$ be a thin set, and for each  $\bx=[x_1:\ldots:x_{n}] \in \Pbb^{n-1}(\Q)$  represented by $(x_1,\ldots,x_{n}) \in \Z^{n}$ with $\gcd(x_1,\ldots,x_{n})=1$, define 
$\|\bx\|=\max_{1 \leq i \leq n}|x_i|$.
Define the counting function  
\beq\label{thin_counting_function_projective_dfn} N_{\Pbb^{n-1}}(M,B) := \#\{\bx \in M \subset \mathbb{P}^{n-1}(\Q): \gcd(x_1,\ldots,x_n)=1, \|\bx\| \leq B\}.
\eeq
Certainly $N_{\Pbb^{n-1}}(M,B) \ll_n B^n$ is trivially true for all $B \geq 1$.

In the affine setting, suppose  $M \subset \mathbb{A}^n(\Z)  $ is a thin set, and for each integral point $\bx=(x_1,\ldots,x_n) \in \Abb^n(\Z)$ define 
$\|\bx\|=\max_{1 \leq i \leq n}|x_i|$. 
Then define the counting function
\beq\label{thin_counting_function_affine_dfn}
N_{\Abb^{n}}(M,B) := \#\{\bx \in M \subset \mathbb{A}^n(\Z): \|\bx\| \leq B\}.
\eeq
Certainly $N_{\Abb^{n}}(M,B) \ll_n B^n$ is trivially true for all $B \geq 1$. 

 The following nontrivial upper bound was established by Serre in  \cite[Ch. 13 Thm. 1]{Ser97}, and is closely related to work of Cohen \cite[Thm. 2.1]{Coh81}.
\begin{thm}[Baseline upper bound]\label{thm_baseline_affine}
For any thin set $M \subset \mathbb{P}^{n-1}(\Q)$, for some $C(M) \geq 1$ and $\ga(M)>0$,
\beq\label{thin_set_bound_proj_intro}
N_{\Pbb^{n-1}}(M,B) \leq C(M) B^{n-1/2} (\log (B+2))^{\ga(M)}, \qquad \text{for all $B \geq 1$.}
\eeq
For any thin set $M \subset \mathbb{A}^n(\Z)$, for some $C(M) \geq 1$ and $\ga(M)>0$,
\beq\label{thin_set_bound_affine_intro}
N_{\Abb^{n}}(M,B) \leq C(M) B^{n-1/2} (\log (B+2))^{\ga(M)}, \qquad \text{for all $B \geq 1$.}
\eeq
\end{thm}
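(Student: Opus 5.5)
By Lemma \ref{lemma_thin_set_two_types}, a thin set is a finite union of thin sets of type I and type II, so it suffices to treat each type separately. The plan is to handle type I by an elementary count and type II by the large sieve, working throughout with the affine statement (\ref{thin_set_bound_affine_intro}) and deducing the projective one at the end.

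For type I, $M \subset V(\Q)$ with $V$ a proper closed subvariety, so $M$ lies in the zero set of a nonzero polynomial $G$. The standard Schwartz--Zippel type count then gives $\ll_{G} B^{n-1}$ integral points of height at most $B$ in the affine case. In the projective case, the primitive representatives of points of $M$ lie on the affine cone over $V$ in $\Abb^n$, and the same count gives $\ll B^{n-1}$. Both bounds are stronger than required.

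For type II, Lemma \ref{lemma_typeII_poly_interp} reduces us, modulo a type I set already handled, to the set of $\bx \in \Z^n$ such that $F(Y,\bx)$ has a rational root. Here $F \in \Z[Y,\bX]$ is irreducible, monic in $Y$, and of degree $d\ge2$. A rational root is then integral, so for every prime $p$ the reduction $F(Y,\bx)$ has a root in $\F_p$. Let $K$ be the splitting field of $F$ over $\Q(\bX)$ and $G=\Gal$ its Galois group, acting transitively on the $d$ roots. Since $d \ge 2$, Jordan's lemma gives a proportion $c=|G_0|/|G|>0$ of elements of $G$ that fix no root.

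The central input is an equidistribution statement modulo $p$: for a set of primes of positive density, the proportion of $\bx\in\F_p^n$ for which $F(Y,\bx)$ has no root in $\F_p$ is $\ge c - O(p^{-1/2})$. I would prove this by applying the Lang--Weil estimates, or the Chebotarev density theorem for function fields over $\F_p$, to the Galois cover attached to $F$ reduced mod $p$. This requires restricting to primes $p$ that split completely in the algebraic closure of $\Q$ in $K$, so that the geometric and arithmetic monodromy groups coincide. These primes have positive Dirichlet density by Chebotarev. Thus $M$ avoids $\ge c p^n/2$ residue classes mod $p$ for all such $p$ beyond a threshold $p_0(F)$.

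We then apply the large sieve inequality with $Q=B^{1/2}$, which gives
\[ N \ll \frac{B^n}{L}, \qquad L=\sum_{q\le Q}\mu^2(q)\prod_{p\mid q}\frac{\omega(p)}{1-\omega(p)}, \]
where $\omega(p)\ge c/2$ on our set of primes. Since these primes have positive density, a standard Wirsing-type lower bound yields $L\gg Q(\log Q)^{-\ga}$ with $\ga = 1-\delta$ for some $\delta>0$. This gives the affine bound $B^{n-1/2}(\log B)^{\ga}$.

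For the projective statement, each point of $M$ has a primitive integer representative, and the cone over $M$ is an affine thin set of the same type; this is realized by the same $F$ after homogenizing, so the affine bound applies directly. The main obstacle I anticipate is the mod $p$ equidistribution step. It requires checking that $F$ remains irreducible with the same monodromy mod $p$ for almost all $p$, handling constant field extensions, and controlling the bad set where the discriminant vanishes. I would treat this via Noether's irreducibility forms together with Lang--Weil.
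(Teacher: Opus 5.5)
Your overall architecture matches the paper's. You reduce projective to affine via the cone (Lemma~\ref{lemma_affine_version_is_thin}), handle type I by Schwartz--Zippel, and pass to $N^\cov_{\Abb^n}(F,B)$ via Lemma~\ref{lemma_typeII_poly_interp}. You then run Serre's large sieve with $Q=B^{1/2}$ over primes splitting completely in the constant field, as in \S\ref{sec_Serre} and Lemma~\ref{lemma_Serre_splits_completely}.

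However, the key mod $p$ step fails as stated. The $F$ produced by Lemma~\ref{lemma_typeII_poly_interp} is only irreducible over $\Q$, because $Z$ is only assumed irreducible over $\Q$, and you never reduce to $F$ absolutely irreducible. Let $k$ be the algebraic closure of $\Q$ in $K$ and $G_{\mathrm{geom}}=\Gal(K/k(\bX))$. For $p$ splitting completely in $k$, the Frobenius classes $\mathrm{Frob}_{\bx}$, $\bx\in\F_p^n$, equidistribute in $G_{\mathrm{geom}}$, not in $G$. So the relevant constant is the proportion of fixed-point-free elements of $G_{\mathrm{geom}}$, not $|G_0|/|G|$.

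The factorization of $F$ over $\overline{\Q}(\bX)$ is already defined over $K\cap\overline{\Q}(\bX)=k(\bX)$. Hence $G_{\mathrm{geom}}$ is transitive on the roots exactly when $F$ is absolutely irreducible. Otherwise Jordan's lemma says nothing, and $G_{\mathrm{geom}}$ may contain no derangements at all.

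Concretely, $F=Y^2-2X_1^2$ is irreducible over $\Q$ with $|G_0|/|G|=1/2$, but $k=\Q(\sqrt2)$ and $G_{\mathrm{geom}}=1$. For every $p\equiv\pm1 \bmod 8$, which are exactly your sieving primes, $y^2\equiv 2x_1^2$ is solvable for every $x_1$. So $\omega(p)=0$ on all of them and the large sieve yields nothing.

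The missing ingredient is the paper's Lemma~\ref{lemma_typeII_geometrically_integral}. Suppose $F$ is irreducible over $\Q$ but not over $\overline{\Q}$. Then an integer root $y$ of $F(Y,\bx)$ is a root of some absolutely irreducible factor $F_1$ of $F$. It is therefore also a root of a conjugate $\tau(F_1)\neq F_1$, with $\tau\in\Gal(\overline{\Q}/\Q)$, which is a different factor of $F$. So $y$ is a double root and $\Delta_F(\bx)=0$. Since $F$ is separable over $\Q(\bX)$, $\Delta_F\not\equiv0$, and Lemma~\ref{lemma_Schwartz_Zippel} gives $N^\cov_{\Abb^n}(F,B)\ll B^{n-1}$.

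Once $F$ is absolutely irreducible, $G_{\mathrm{geom}}$ is transitive and Jordan's lemma applies to it. Your sieve argument then goes through with $c$ replaced by the derangement proportion in $G_{\mathrm{geom}}$; this is precisely the setting of Lemma~\ref{lemma_Serre_splits_completely}. Alternatively, you can keep general $F$ but sieve with primes whose Frobenius in $\Gal(k/\Q)\cong G/G_{\mathrm{geom}}$ is the image of a derangement $g\in G$. Then $\mathrm{Frob}_{\bx}$ equidistributes over a coset of $G_{\mathrm{geom}}$ that contains $g$.

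A minor point on the projective step: you do not need the cone to be ``realized by the same $F$ after homogenizing''. Lemma~\ref{lemma_affine_version_is_thin} shows the cone is a finite union of affine thin sets of types I and II, and your affine bound already covers all of these.
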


We will return momentarily to the dependence of $C(M)$ (and the exponent $\ga(M)$) on $M$, which is a significant focus of the present paper. 

A basic observation is in order,  following \cite[p. 187]{Ser97}; we provide a proof in  \S \ref{sec_simple_relation_projective_affine}.
\begin{lem}\label{lemma_affine_version_is_thin} 
Given a thin set $M \subset \mathbb{P}^{n-1}(\Q)$, 
define the set
\[M' = \{\bx \in \Z^{n}\setminus\{\boldsymbol{0}\}: p(\bx) \in M\} \subset \mathbb{A}^n(\Z),\]
in which $p$ maps the tuple $(x_1,\ldots,x_{n})$ into $\mathbb{P}^{n-1}(\Q)$. Then $M'$ is a thin set in $\mathbb{A}^n(\Z)$ and \beq\label{aff_proj_intro}
N_{\Pbb^{n-1}}(M,B) \leq N_{\Abb^{n}}(M',B).
\eeq
Moreover, if $M$ is of type I then $M'$ is a finite union of thin sets of type I; if $M$ is type II then $M'$ is a finite union of thin sets of type I and type II. 
\end{lem}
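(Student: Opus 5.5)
The plan is to pull back along the cone map. Let $p: \Abb^n \setminus\{0\} \maps \Pbb^{n-1}$ be the standard projection $(x_1,\ldots,x_n) \mapsto [x_1:\cdots:x_n]$, a morphism over $\Q$. Take $X$ and $\pi: X \maps \Pbb^{n-1}$ witnessing that $M$ is thin, so that $M\subset \pi(X(\Q))$, the generic fibre of $\pi$ is finite, and $\pi$ has no rational section. Form the fibre product
\[
X' = X \times_{\Pbb^{n-1}} (\Abb^n\setminus\{0\}),
\]
with projection $\pi': X' \maps \Abb^n\setminus\{0\} \subset \Abb^n$. By the universal property of the fibre product, any $\bx\in\Z^n\setminus\{0\}$ with $p(\bx)=\pi(z)$ for some $z\in X(\Q)$ gives a rational point $(z,\bx)\in X'(\Q)$. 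Hence $M'\subset \pi'(X'(\Q))$.

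Next I check condition (ii) for $\pi'$. The generic point of $\Abb^n$ maps to the generic point of $\Pbb^{n-1}$, so the generic fibre of $\pi'$ is the base change of the generic fibre of $\pi$ along $\Q(\Pbb^{n-1}) \subset \Q(\Abb^n)$, and in particular it is finite. The field extension $\Q(\Abb^n)=\Q(\Pbb^{n-1})(t)$ is purely transcendental: take $t=x_n$ on the chart $x_n\ne0$. The key claim is that a rational section of $\pi'$ would yield a rational section of $\pi$. Such a section is a $\Q(\Pbb^{n-1})(t)$-point of the generic fibre $X_\eta$. If $X_\eta$ is a finite scheme over $K=\Q(\Pbb^{n-1})$, then each of its points has residue field finite over $K$, and $K$ is algebraically closed in $K(t)$. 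So a $K(t)$-point of $X_\eta$ is already a $K$-point, which contradicts the assumption that $\pi$ has no rational section. This descent step is the main obstacle. One must be careful that the generic fibre is finite, hence zero-dimensional, and I would argue this directly through the structure of the residue fields. Alternatively, one can specialize $t$ to a generic rational value, which gives a section over an open subset of the hyperplane $x_n=1$, identified with an open subset of $\Pbb^{n-1}$.

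The inequality \eqref{aff_proj_intro} then follows by an injection. Each point counted in $N_{\Pbb^{n-1}}(M,B)$ has a primitive integral representative $\bx$ with $\|\bx\|\le B$. This $\bx$ lies in $M'$, and distinct projective points have distinct representatives.

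For the type statements, suppose first that $M\subset V(\Q)$ with $V\subsetneq\Pbb^{n-1}$ closed. Then $M'$ lies in the affine cone over $V$, which is a proper closed subset of $\Abb^n$, so $M'$ is type I. Now suppose $M$ is type II, witnessed by $\pi: Z\maps\Pbb^{n-1}$ with $Z$ irreducible and of degree $d\ge2$. I would take the fibre product $Z'$ as above, or more concretely the union of the affine charts where $x_i\neq0$. Then $Z'$ may be replaced by its irreducible components dominating $\Abb^n$; these have dimension $n$, and since the generic fibre is irreducible after the purely transcendental base change, there is in fact a single such component of degree $d\ge 2$. The non-dominant components, together with the points of $M'$ where $\pi'$ fails to behave well, have images contained in proper closed subsets and so contribute type I pieces. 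The remaining piece is type II, which gives the finite union claimed.
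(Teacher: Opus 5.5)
Your argument is correct, and it takes a genuinely different route from the paper.

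\textbf{The paper's route.} The paper first splits $M$ into type I and type II pieces (Lemma \ref{lemma_thin_set_two_types}) and then works chart by chart. For type I it uses the charts $U_i$ of $\Pbb^{n-1}$ and $A_i=\{x_i\neq 0\}$ of $\Abb^n\setminus\{0\}$. For type II it restricts $\pi$ to the affine pieces $Z_j=Z\cap U_j$ of the ambient $\Pbb^N$. It then checks, via Lemma \ref{lemma_dominant_degree1_rational-section}, that each dominant $\pi_j$ still has degree at least $2$, because a rational section of $\pi_j$ would be one of $\pi$. Finally it asserts that the sets $M'_{i,j}=\{\bx\in A_i: p(\bx)\in\pi_j(Z_j(\Q))\}$ are of type I or II, so $M'$ is thin as a finite union.

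\textbf{Your route.} You base change along the cone map $\Abb^n\setminus\{0\}\to\Pbb^{n-1}$. This gains several things.
\begin{itemize}
\item Condition (ii) is verified for an arbitrary thin set at once, by descending from $K(t)$ to $K=\Q(\Pbb^{n-1})$, using that $K$ is algebraically closed in $K(t)$.
\item The type I case becomes a single affine cone rather than a union over charts.
\item Most usefully, you actually construct the $n$-dimensional variety over $\Abb^n$ that witnesses the type II pieces. The paper leaves this step implicit.
\item You also see that the degree $d$ is preserved exactly, since $K(Z)\otimes_K K(t)=K(Z)(t)$.
\end{itemize}
The paper's version is more elementary in language, using only charts and the rational-section criterion.

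\textbf{One point to tidy in the type II step.} The affine definition requires the witnessing variety to be affine. The charts $Z'\cap\{x_i\neq 0\}\cong \pi^{-1}(U_i)\times\mathbb{G}_m$ are affine only when $\pi^{-1}(U_i)$ is, for instance when $\pi$ is finite. This can fail for a generically finite $\pi$ that contracts a curve lying over $U_i$. The fix is to pull back an affine open cover of $Z$ instead, such as the pieces $Z\cap U_j$ used in the paper.
\begin{itemize}
\item Since $\Abb^n\setminus\{0\}\to\Pbb^{n-1}$ is an affine morphism, these pullbacks are affine.
\item $Z'$ is irreducible, being a $\mathbb{G}_m$-torsor over the irreducible $Z$. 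So each pullback is dense in $Z'$, and hence dominant of degree $d$ over $\Abb^n$.
\end{itemize}
Irreducibility of $Z'$ also means there are no non-dominant components to discard, so the type I pieces in your last sentence do not actually arise.
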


Thus to establish the baseline upper bound (\ref{thin_set_bound_proj_intro}) for all projective thin sets, it suffices to prove the affine result (\ref{thin_set_bound_affine_intro}).
However, if one aims to improve on the baseline exponent $n-1/2$ in (\ref{thin_set_bound_proj_intro}), this relationship is    of limited use: 
 on the one hand Serre observed that the affine bound (\ref{thin_set_bound_affine_intro}) is essentially sharp (see Example \ref{example_affineII_lower_bound}). On the other hand, Serre asked an influential question about the projective setting:
 \begin{conj}[Thin set, projective]\label{conj_Serre_thin_proj}
 For any   thin set $M \subset \mathbb{P}^{n-1}(\Q)$,  for some $C(M) \geq 1$ and $\ga (M) >0$,
\beq\label{Serre_question_projective}
N_{\Pbb^{n-1}}(M,B) \leq C(M) B^{n-1}(\log (B+2))^{\ga(M)} \qquad \text{for all $B \geq 1$.}
\eeq
\end{conj}
This prediction is stated in both \cite[p. 178 after Thm. 3]{Ser97} and \cite[p. 27]{Ser92}. 
Comparing this conjecture to the fact that (\ref{thin_set_bound_affine_intro}) cannot be improved shows it is essential to consider the affine and projective settings separately---yet they also interact in important ways. In \S \ref{sec_lit_typeI}-\S \ref{sec_lit_typeII}, we present a survey  of results on counting points in thin sets, which clearly separates  out each of the four   combinations of projective/affine and type I/type II designations.  In particular, we remark that Conjecture \ref{conj_Serre_thin_proj} has now largely been resolved by the recent work of \cite{BCSSV25x}, with related progress  in the affine case in work of the authors \cite{BPW25x}.

\subsection{Questions of dependence and uniformity}
In the second half of the paper  we present original results, focused on obtaining refined information  for $C(M)$ and $\gamma(M)$ in Theorem \ref{thm_baseline_affine}, and with a specific focus on affine type II thin sets. 
Given any polynomial $F \in \Z[Y,X_1,\ldots,X_n]$ with $\deg_Y F \geq 2$, define
\beq\label{NFB_intro}
N^{\mathrm{cov}}_{\Abb^{n}}(F,B):=\#\{\bfx\in[-B,B]^n \cap \Z^n:  \text{ $F(Y,\bfx)=0$ is solvable over $\Z$}\}.
\eeq
It is always trivially true that $N^\cov_{\Abb^n}(F,B) \ll B^n$ (and this is sharp in the case that $F(Y,\bX)$ has $\deg_Y F \leq 1$).  
To prove   Theorem \ref{thm_baseline_affine} for an arbitrary affine thin set   (and hence also for an arbitrary projective thin set, by (\ref{aff_proj_intro})), it will suffice to prove for all polynomials  $F$ with $\deg_Y F \geq 2$ that are absolutely irreducible (that is, irreducible over $\overline{\Q}$),
 \beq\label{NFB_nonuniform_intro}
N^\cov_{\Abb^n}(F,B) \leq C(F) B^{n-\frac{1}{2}}  (\log(B+2))^{\ga(F)} \qquad \text{for all $B \geq 1$,}
\eeq
in which $C(F)$ and $\ga(F)$ are positive constants that may depend on $F$. (That this is sufficient is suggested already by Lemma \ref{lemma_typeII_poly_interp}, and we will provide the details in   \S \ref{sec_lit_typeII}.) Let $\|F\|$ denote the maximum absolute value of any coefficient of $F$. We say an upper bound of the form (\ref{NFB_nonuniform_intro}) is \emph{uniform} if $C(F)$ (and $\ga(F)$) depend at most on $n$ and the total degree of $F$, but are independent of $\|F\|$.

   Serre's original treatment  proves (\ref{NFB_nonuniform_intro}) via the large sieve, for any absolutely irreducible polynomial $F$, with   a choice of $\ga(F)<1$ but unspecified dependence of $C(F)$ on $\|F\|$ \cite[Ch. 13, Thm. 1]{Ser97}. In Theorem \ref{thm_Serre}, we prove that in the special case that 
 \beq\label{F_special_structure}
 \text{$F(Y,\bX)$ is   a monic polynomial in $Y^m$ for some $m \geq 2$,}
 \eeq
 Serre's strategy can be refined to prove (\ref{NFB_nonuniform_intro}) with a constant $C(F)$ that exhibits at most \textit{polylogarithmic} dependence on $\|F\|$.  For $m=1$ (so that $F(Y,\bX)$ is only assumed to be monic in $Y$), we observe that this refinement   can be obtained under the assumption of GRH for a single Dedekind zeta function.   (We also remark that contemporaneous with Serre's work, Cohen studied an upper bound for the number of specializations of a polynomial $F(Y,\bX)$ to $F(Y,\bx)$ for which the corresponding Galois group is not preserved \cite[Thm. 2.1]{Coh81}. Modifying Cohen's approach yields an upper bound of the form (\ref{NFB_nonuniform_intro}) with $C(F)$ having at most \emph{polynomial} dependence on $\|F\|$ (over an arbitrary number field). We consider Cohen's strategy in detail in a different work \cite{BPW25x_gen}.)

In our recent work \cite{BPW25x} we require a  stronger result, namely that (\ref{NFB_nonuniform_intro}) holds with $C(F)$ having at most \emph{polylogarithmic} dependence on $\|F\|$, without assuming the special structure (\ref{F_special_structure}) of $F$. In Theorem \ref{thm_polylog} we prove such a bound; this argument originally appeared as \cite[Thm. 2.13]{BPW25x}. This confirms that Theorem \ref{thm_baseline_affine} holds, with $C(M)$ having at most polylog dependence on the coefficients of the polynomial(s) defining the thin set $M$.

Finally, we consider the question of (potential) \emph{uniformity} when bounding $N^\cov_{\Abb^n}(F,B)$, in the setting of affine thin sets of type II. As we will survey in \S \ref{sec_lit_typeI}, the uniformity of estimates for $N_{\Pbb^{n-1}}(M,B)$ and  $N_{\Abb^{n}}(M,B)$ has been extensively studied when $M$ is a thin set of type I. 
In \S \ref{sec_uniformity_question} we raise the question of whether uniformity for  thin sets of type II is more subtle. In particular, we present   counterexamples to a  (putative) uniform upper bound  for affine thin sets of type II, and prove:

     \begin{thm}\label{thm_counterex} 
Let $n\geq 1$ be given. As $k$ varies over positive integers, there is a family of  thin sets $M_k \in \Abb^n(\Z)$  of type II, defined by polynomials $F_k$ with $\|F_k\|=k$,   for which there is no constant $c>0$ such that 
\[N_{\Abb^{n}} (M_k, B) \ll_{n,\deg F_k}B^{n-1}(\log B)^{c}\]
can hold as $k \rightarrow \infty$, with an implicit constant independent of $k$.
\end{thm}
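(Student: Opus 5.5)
We will construct an explicit family of degree-$2$ covers whose integer fibres are forced to be large for many $\bx$ once $B$ is comparable to a power of $k$. The simplest candidate is
\[
F_k(Y,\bX) = Y^2 - k X_1, \qquad (\text{or } F_k(Y,\bX)=kY^2 - X_1 \text{ if we want the thin set defined by a non-monic polynomial}),
\]
which has $\|F_k\|=k$, total degree $2$, and $\deg_Y F_k = 2$. It is irreducible over $\overline{\Q}$, since $kX_1$ is not a square in $\overline{\Q}[\bX]$. So the set $M_k=\{\bx\in\Z^n : F_k(Y,\bx)=0 \text{ solvable over }\Z\}$ is an affine thin set of type II, via the cover $Z_k=\{Y^2=kX_1\}\to\Abb^n$ of degree $2$. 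The count is elementary: $\bx\in M_k$ iff $kx_1$ is a perfect square, with $x_2,\dots,x_n$ free. If $k$ is squarefree, this happens iff $x_1=k m^2$. Hence
\[
N_{\Abb^n}(M_k,B) = (2B+1)^{n-1}\cdot\#\{m\in\Z : k m^2\le B\} \asymp B^{n-1}\bigl(1+\sqrt{B/k}\bigr).
\]
This is not large enough, since it decreases in $k$. So the family must instead make the count grow with $k$ in some range of $B$.

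The better idea is to make the cover split over many points. Take $F_k(Y,\bX)=Y^2-X_1\prod_{j=1}^{k}(\ldots)$, or more simply use coefficients to make $F_k(Y,\bx)$ reducible on a large lattice. For example, take
\[
F_k(Y,\bX)=(Y^2-X_1)(\ldots)+k\,G(\bX),
\]
with $F_k$ congruent modulo $k$ to a split polynomial. The cleanest version I would pursue is
\[
F_k(Y,\bX)=Y^2 - Y - kX_1 \quad\text{or}\quad F_k=Y^2-(X_1^2+k).
\]
For the latter, solvability means $x_1^2+k=y^2$, i.e. $(y-x_1)(y+x_1)=k$. The number of such $x_1$ is about $\tau(k)$, independent of $B$ once $B\ge k$. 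This gives $N\gg B^{n-1}\tau(k)$. Since $\tau(k)$ is unbounded, taking $k$ with $\tau(k)\ge \exp(c'\log k/\log\log k)$ and $B$ between $k$ and a power of $k$ gives $\tau(k)\gg(\log B)^{C}$ for every fixed $C$. This contradicts any bound $B^{n-1}(\log B)^c$ whose constant is independent of $k$.

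The steps are as follows.
\begin{enumerate}
\item Verify that $Y^2-X_1^2-k$ is absolutely irreducible for $k\neq0$. This holds because $X_1^2+k$ is not a square in $\overline{\Q}[\bX]$.
\item Count the solutions: every factorisation $k=ab$ with $a\equiv b \pmod 2$ gives $x_1=(b-a)/2$. Restricting to $k$ odd, every divisor pair qualifies, and each $|x_1|\le k\le B$. So $N_{\Abb^n}(M_k,B)\ge \tfrac12\tau(k)(2B+1)^{n-1}$ for $B\ge k$.
\item Choose $k$ to be a product of the first $r$ odd primes and set $B=k$. Then $\tau(k)=2^r$, while $\log B\asymp r\log r$, so $2^r$ exceeds $(\log B)^{c}$ times any constant for large $r$.
\end{enumerate}
If the theorem requires $\|F_k\|=k$ exactly for every positive integer $k$, the family is indexed by all $k$ and the contradiction is drawn along the subsequence of step 3.

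The main obstacle is arranging that the count grows faster than every power of $\log B$ while staying inside the constraints $\|F_k\|=k$ and fixed degree. The choice $B=k$ in step 3 handles this, because the divisor bound $\tau(k)$ is super-polylogarithmic in $k$ along primorial-type $k$.
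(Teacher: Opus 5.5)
Your final argument (steps 1--3) is correct, and it reaches the theorem by a different and more elementary route than the paper. Both constructions rest on the same mechanism: a degree-$2$ cover whose integral fibres over a line are counted by a divisor-type function. That function is super-polylogarithmic along highly composite $k$, with $B$ taken to be a fixed power of $k$.

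The paper works with sums of two squares. For $n=1$ it takes $Y^2+X_1^2-k$, with $k$ the product of the primes $p\equiv 1 \pmod 4$ up to $\log B$. The count is then $\tfrac12 r(k)=2\cdot 2^{\omega(k)}$, which uses the formula for $r(k)$ and Siegel--Walfisz to size $k$ and $\omega(k)$. For $n\ge 2$ it uses the coupled polynomial $Y^2+X_1^2-k(X_2+\cdots+X_n)$. The lower bound there needs the Busche--Ramanujan identity for $r(kz)$, the Gauss circle asymptotic, and control of the error terms, in order to show $\sum_{|z|\ll B} r(kz)\sim \pi B\, r(k)$.

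Your difference of two squares makes every divisor pair of an odd $k$ contribute. So for primorial-type $k$ the count $\tau(k)=2^r$ is exact, and only Chebyshev-strength information on $\log k$ is needed. For $n\ge 2$ you let $X_2,\dots,X_n$ be free, taking a cylinder over the one-variable example, so the factor $(2B+1)^{n-1}$ is trivial.

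What the paper's version buys is an $F_k$ that involves all of $X_1,\dots,X_n$, with the large factor emerging on average over the fibres $x_2+\cdots+x_n=z$ rather than from a fixed finite set of $x_1$. The statement imposes no such nondegeneracy, and the paper's own example is not $(1,n)$-allowable either. So that extra structure is not needed for the theorem as stated.

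Two small points for a write-up. First, discard the exploratory candidates at the start ($Y^2-kX_1$ and the incomplete expressions). Second, note that $a\mapsto (k/a-a)/2$ is strictly decreasing on positive divisors $a$ of $k$. This gives exactly $\tau(k)$ distinct values of $x_1$, not merely $\tfrac12\tau(k)$.
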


\subsection{Notation}
We use $A \ll B$ to denote that $|A| \leq C B$ for some constant $C>0$, where $B$ is non-negative. We use $A\ll_\kappa B$ to denote that the constant $C$ may depend upon the parameter $\kappa$. 
We record the five main counting functions extensively studied in this paper.
\begin{itemize}
\item $ N_{\Pbb^{n-1}}(M,B) := \#\{\bx \in M \subset \mathbb{P}^{n-1}(\Q):  \bx=(x_1,\ldots,x_n) \in \Z^n, \gcd(x_1,\ldots,x_n)=1, \|\bx\| \leq B\},$ for a thin set $M \subset \Pbb^{n-1}$.
\item 
$N_{\Abb^{n}}(M,B) := \#\{\bx \in M \subset \mathbb{A}^n(\Z): \bx=(x_1,\ldots,x_n) \in \Z^n, \|\bx\| \leq B\},$  for a thin set $M \subset \Abb^n$.
\item  
$ N_{\proj}(V,B) := \#\{ \bx \in V\cap \Pbb^{N}(\Q):  \bx=(x_1,\ldots,x_{N+1}) \in \Z^{N+1},\gcd(x_1,\ldots,x_{N+1})=1, \|\bx\| \leq B\}$, for an irreducible projective variety $V \subset \Pbb^N$.
\item 
$ N_{\aff}(V,B) := \#\{ \bx \in V\cap \Abb^{N}(\Z):   \bx=(x_1,\ldots,x_N) \in \Z^N, \|\bx\| \leq B\},$ for an irreducible affine variety $V \subset \Abb^N$.
    \item $N^\cov_{\Abb^n}(F,B):=\{\bfx\in [-B,B]^n\cap \Z^n:  \text{$F(Y,\bfx)=0$ is solvable over $\Z$}\}$, for a polynomial $F(Y,X_1,\ldots,X_n)\in \Z[Y,X_1,\ldots,X_n]$, relevant to affine thin sets of type II.
\end{itemize}
We may generally assume $B \geq 3$ so that $\log B \gg 1$.

 \section{Preliminaries: motivation for thin sets of type I and type II}\label{sec_types_motivation} 

In this section we prove Lemmas \ref{lemma_thin_set_two_types}, \ref{lemma_typeII_poly_interp}, and \ref{lemma_affine_version_is_thin}.
The underlying construction leading to the two types of thin sets is intrinsically geometric, and thus we also provide a brief motivation of these ideas, with analytic number theorists in mind. We consider here the projective setting, but analogous arguments motivate the affine setting.

\subsection{Terminology}\label{sec_morphism_terminology}
 We start by recalling some terminology. We define a variety $X$ over a field $k$ to be a (reduced) scheme $X$   such that the structure morphism $X\rightarrow \text{Spec} (k)$ is separated and of finite type (see \cite[\href{https://stacks.math.columbia.edu/tag/01T0}{Section $29.15$}]{StaPro} for the definition of morphism of finite type, and \cite[\href{https://stacks.math.columbia.edu/tag/01KH}{Section 26.21}]{StaPro} for the definition of separated morphism).  
For varieties $X,Y$, a morphism $\pi : X \rightarrow   Y$ is said to be dominant if the image $\pi(X)$ is dense in $Y$. A morphism is said to be generically finite if the generic fibre is finite. 
 In the setting of thin sets, we now confirm the following useful fact:
 \begin{lem} If $X$ is an irreducible variety  and a morphism $\pi: X \maps \Pbb^{n-1}$ is given, the morphism $\pi$ is of finite type (and locally of finite type). 
 \end{lem}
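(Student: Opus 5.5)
The plan is to use the standard cancellation property of morphisms of finite type, applied to the factorization of $\pi$ through the structure morphisms to $\mathrm{Spec}(\Q)$. By the definition of variety adopted above, the structure morphism $X \maps \mathrm{Spec}(\Q)$ is separated and of finite type. Likewise $\Pbb^{n-1}_{\Q} \maps \mathrm{Spec}(\Q)$ is separated (indeed proper) and of finite type, since $\Pbb^{n-1}$ is covered by the $n$ standard affine opens $\mathrm{Spec}\,\Q[x_1/x_i,\ldots,x_n/x_i]$, each the spectrum of a finitely generated $\Q$-algebra. Here $\pi$ is understood as a morphism over $\Q$, so that the composite $X \xrightarrow{\pi} \Pbb^{n-1} \maps \mathrm{Spec}(\Q)$ is the structure morphism of $X$.

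The key step is then the lemma that if $g\circ f$ is locally of finite type, then $f$ is locally of finite type, with no hypothesis on $g$. I would cite this from the Stacks Project (Morphisms, Lemma 29.15.8) or sketch it directly. Cover $\Pbb^{n-1}$ by affine opens $V=\mathrm{Spec}\, A$ with $A$ a finitely generated $\Q$-algebra. Cover $\pi^{-1}(V)$ by affine opens $U=\mathrm{Spec}\, R$ with $R$ a finitely generated $\Q$-algebra. Then $R$ is generated as an $A$-algebra by the images of the $\Q$-algebra generators, since $\Q\to R$ factors through $A$. Hence $\pi$ is locally of finite type.

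To upgrade to finite type we also need $\pi$ to be quasi-compact. If $X \maps \mathrm{Spec}(\Q)$ is quasi-compact and $\Pbb^{n-1}\maps\mathrm{Spec}(\Q)$ is quasi-separated (it is separated), then $\pi$ is quasi-compact (Stacks, Schemes Lemma 26.21.14). Directly: $X$ is a quasi-compact scheme, being of finite type over a field. The preimage $\pi^{-1}(V)$ of an affine open $V$ is the intersection of $X$ with $V$ inside the separated setting. More concretely, $\pi^{-1}(V) = \Gamma_\pi^{-1}(X\times V)$, where $\Gamma_\pi: X\maps X\times_\Q \Pbb^{n-1}$ is the graph morphism. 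This graph is a closed immersion because $\Pbb^{n-1}$ is separated, hence quasi-compact. Also $X\times_\Q V$ is quasi-compact, since $X$ is covered by finitely many affines and each product of affines over $\Q$ is affine. So $\pi^{-1}(V)$ is quasi-compact. Combining quasi-compactness with local finite type gives that $\pi$ is of finite type.

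I expect the only real obstacle to be this quasi-compactness step, that is, making sure the separatedness of the target is used correctly. Irreducibility of $X$ plays no essential role beyond $X$ being a variety.
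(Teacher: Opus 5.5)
Your proof is correct. The locally-of-finite-type step is the same as the paper's: both apply the cancellation lemma (Stacks Project, Lemma 29.15.8) to $X \to \Pbb^{n-1} \to \mathrm{Spec}(\Q)$. Your assumption that $\pi$ is a $\Q$-morphism is in fact automatic, since any scheme admits at most one morphism to $\mathrm{Spec}(\Q)$.

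The difference is in the quasi-compactness step. You combine quasi-compactness of $X$ with separatedness of the target. Concretely, the graph $\Gamma_\pi$ is a closed immersion, and $X\times_\Q V$ is quasi-compact, so $\pi^{-1}(V)$ is quasi-compact; this amounts to ``quasi-compact source, quasi-separated target.'' The paper instead notes that $X$, being of finite type over a field, is a Noetherian scheme. Every open subset of a Noetherian topological space is quasi-compact, so any morphism out of $X$ is quasi-compact (Stacks Project, Lemma 28.5.8).

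The paper's route is shorter and uses nothing about $\Pbb^{n-1}$, so the step you flagged as the main obstacle does not arise there. Your route does not need Noetherianity of $X$, and it would apply to any quasi-compact source mapping to a quasi-separated target. Either argument suffices here.
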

 \begin{proof}
To record the relevant  citations to verify this, consider in general    two integral schemes $X,Y$ of finite type over $\Q$ (and consequently locally of finite type over $\Q$) and let $f: X\rightarrow Y$ be a   morphism (in our case, $f=\pi$ and $Y = \bP^{n-1}$). In particular, since $X$ is locally of finite type over $\mathbb{Q}$, then a  morphism $f:X\rightarrow Y$ is locally of finite type, by Lemma 29.15.8  in \cite[\href{https://stacks.math.columbia.edu/tag/01T0}{Section 01T0}]{StaPro}. 
By   Dfn. 29.15.1 (ibid), to conclude that $f$ is of finite type, it suffices to show that in addition to being locally of finite type,  $f$ is quasi-compact. 
(As in Dfn. 5.12.1 \cite[\href{https://stacks.math.columbia.edu/tag/005A}{Definition 005A}]{StaPro}, a topological space $V$ is quasi-compact if every open covering of $V$ has a finite subcover. A continuous map $f:X\maps Y$ is quasi-compact if the inverse image $f^{-1}(V)$ of every quasi-compact open $V \subset Y$ is quasi-compact.) Thus suppose $V \subset Y$ is quasi-compact, and consider for the given morphism $f: X \maps Y$ the inverse image $f^{-1}(V)$. Since $X$ is a noetherian scheme ($X\rightarrow \text{Spec} (k)$ is of finite type and hence $X$ is noetherian \cite[\href{https://stacks.math.columbia.edu/tag/01T0}{Lemma $29.15.6$}]{StaPro}), then $f$ is quasi-compact thanks to \cite[\href{https://stacks.math.columbia.edu/tag/01OU}{Lemma $28.5.8$}]{StaPro}.
\end{proof}

 \subsection{The two types} \label{sec_classifying_two types}
Recall the definition:
A  subset $M \subset \mathbb{P}^{n-1}(\Q)$  is thin if there is an algebraic variety $X$ over $\Q$ and a morphism $\pi : X \rightarrow \mathbb{P}^{n-1}$   with the properties

(i) $M \subset \pi(X(\Q))$, and 

(ii) the fibre of $\pi$ over the generic point is finite and $\pi$ has no rational section over $\Q$.  
\\
Let us see how this   leads to the distinction between type I and type II thin sets, as defined in \S \ref{sec_types}, and the proof of Lemma \ref{lemma_thin_set_two_types}.

Let  $M \subset \Pbb^{n-1}(\Q)$ be a thin set,  with corresponding   variety $X$ defined over $\Q$ and   morphism $\pi : X \rightarrow \Pbb^{n-1}$. We furthermore assume $X$ is irreducible and reduced. (This is sufficient for studying the general case, after  decomposing $X$ into irreducible components.) 
 There are naturally two cases to consider: 

(I) $\pi: X \maps \Pbb^{n-1}$ is not dominant; 

(II) $\pi: X \maps \Pbb^{n-1}$ is dominant. 

Case (I): In the first case, the image $\pi(X)$ is not dense in $\Pbb^{n-1}$, that is to say the closure $\overline{\pi(X)}$ is a proper closed subset of  $\Pbb^{n-1}$. Since $M \subset \pi(X(\Q))$ by property (i) of the definition, it follows that  $M$ is contained within the rational points $V(\Q)$ of a proper closed subvariety $V \subsetneq \Pbb^{n-1}$. This leads to the notion of a thin set of type I.  

The second case, when $\pi$ is dominant, is more subtle. (Indeed, Serre   called thin sets of type II the ``most interesting case of a thin set'' \cite[p. 121]{Ser97}.)

We now record two lemmas:

\begin{lem}[{\cite[Corollary $13.1.6$]{EGAIV_part3}}]\label{lemma_fibre_dim}
  If   $f: X \maps Y$ is a dominant morphism that is locally of finite type, and $r=\dim X - \dim Y$, then every irreducible component of every fibre $f^{-1}(y)$ has dimension at least $r$. In particular, if $f$ has generically finite fibres, then $r=0$.
\end{lem}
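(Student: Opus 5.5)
This statement is cited from EGA IV, so the plan is to reduce it to the standard dimension theory of finitely generated algebras over a field, rather than reprove the general scheme-theoretic result. Since the fibre dimension at a point $x \in X$ with image $y = f(x)$ depends only on local data, I would first reduce to the affine, integral case. Replace $X$ by an irreducible component that dominates $Y$ (there is one, since $f$ is dominant and $Y$ is irreducible in our setting). Then restrict to affine opens $\mathrm{Spec}\, A \subset X$ and $\mathrm{Spec}\, R \subset Y$ with $f(\mathrm{Spec}\, A) \subset \mathrm{Spec}\, R$. Dominance gives an injection $R \hookrightarrow A$ of domains with $A$ finitely generated over $R$. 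In our application $X$ and $Y$ are of finite type over $\Q$, so I will assume this throughout; it is all the paper needs.

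The key algebraic input is the dimension formula. For a finitely generated extension of domains $R \subset A$ over a field $k$, we have $\dim A - \dim R = \mathrm{trdeg}_{K(R)} K(A) = r$. The generic fibre $A \otimes_R K(R)$ therefore has dimension exactly $r$. For a closed fibre, I would argue as follows.

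1. Let $\mathfrak p \subset R$ be the prime of $y$, and let $\mathfrak q$ be a minimal prime of $A$ over $\mathfrak p A$, corresponding to an irreducible component $W$ of $f^{-1}(y)$.
2. Localize at a closed point of $W$ and apply the inequality $\dim A_{\mathfrak m} \leq \dim R_{\mathfrak m \cap R} + \dim (A_{\mathfrak m}/(\mathfrak m\cap R) A_{\mathfrak m})$. This is the standard going-down-free inequality, valid for any local homomorphism of noetherian local rings (Matsumura, Thm.~15.1).
3. Since $A$ and $R$ are finitely generated domains over a field, they are catenary with all maximal chains of equal length. Hence $\dim A_{\mathfrak m} = \dim A$ and $\dim R_{\mathfrak n} = \dim R$ at closed points, by Noether normalization.
4. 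Rearranging gives fibre dimension at least $\dim X - \dim Y = r$ at every closed point of $W$, hence $\dim W \geq r$.

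Non-closed points $y$ reduce to this by taking a closed specialization. Alternatively, one can base-change along $\mathrm{Spec}\,\kappa(y)$ and use that $\dim W$ equals the transcendence degree of its function field over $\kappa(y)$, comparing it with $\dim \overline{\{y\}}$.

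For the final assertion, suppose $f$ has generically finite fibres. Then the generic fibre is finite, so its dimension, which equals $\mathrm{trdeg}_{K(Y)}K(X)$, is $0$. By the dimension formula above, $r = \dim X - \dim Y = 0$.

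The main obstacle is step 3: knowing that $\dim R_{\mathfrak n} = \dim R$ at every closed point, and that dimensions add along localizations. This is precisely where finite type over a field is essential; for general noetherian schemes the statement needs the more careful formulation in EGA. I would simply invoke Noether normalization and the catenarity of finitely generated $k$-algebras here.
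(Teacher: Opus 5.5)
The paper gives no argument for this lemma; it only cites EGA IV, Cor.~13.1.6. So your self-contained commutative-algebra proof for integral schemes of finite type over a field is a different route, and that setting is all the paper needs.

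Your proof of the final assertion is correct and stands on its own. Dominance gives $R\hookrightarrow A$, the generic fibre has dimension $\operatorname{trdeg}_{K(R)}K(A)=\dim A-\dim R$, and finiteness forces this to be $0$. This is the only part of the lemma the paper ever invokes, to get equality of dimensions for dominant generically finite maps. Your route is also cleaner than deducing it from the first claim, which would need that claim at the non-closed generic point of $Y$.

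The genuine gap is the first claim for non-closed $y$. The step ``non-closed points reduce to this by taking a closed specialization'' does not work as stated. Local fibre dimension is upper semicontinuous, so a lower bound on the components of $f^{-1}(y_0)$ for a closed $y_0\in\overline{\{y\}}$ says nothing about a component $W$ of $f^{-1}(y)$. Moreover, components of $f^{-1}(y_0)$ need not lie in $\overline W$. Your $\kappa(y)$ alternative yields $\dim W=\dim\overline W-\dim\overline{\{y\}}$, but you still need $\operatorname{codim}_X\overline W\le\operatorname{codim}_Y\overline{\{y\}}$, and this is never established.

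The repair is to apply your step-2 inequality at the generic point $\mathfrak q$ of $W$ instead of at a closed point. For the local map $R_{\mathfrak p}\to A_{\mathfrak q}$, the fibre ring $A_{\mathfrak q}/\mathfrak pA_{\mathfrak q}$ is zero-dimensional because $\mathfrak q$ is minimal over $\mathfrak pA$. Hence $\operatorname{ht}\mathfrak q\le\operatorname{ht}\mathfrak p$, and then
\[
\dim W=\operatorname{trdeg}_{\kappa(\mathfrak p)}\kappa(\mathfrak q)=(\dim A-\operatorname{ht}\mathfrak q)-(\dim R-\operatorname{ht}\mathfrak p)\ge \dim A-\dim R=r .
\]
This handles all $y$ at once. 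It also closes a small hole in your step 4: a bound on $\dim A_{\mathfrak m}/\mathfrak pA_{\mathfrak m}$ only controls the largest fibre component through $\mathfrak m$, so you would need a closed point of $W$ lying on no other component.

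Two smaller fixes:
\begin{enumerate}
\item In step 1, take $\mathfrak q$ lying over $\mathfrak p$ and minimal over $\mathfrak pA$. A minimal prime over $\mathfrak pA$ can contract to a larger prime, e.g.\ for $k[x,y]\to k[x,z]$, $y\mapsto xz$, $\mathfrak p=(x)$.
\item Do not replace $X$ by a dominating component, since that changes $\dim X$. Just assume $X$ integral, as the paper does. For reducible $X$ the statement is false: take $\mathbb{A}^1\sqcup\mathbb{A}^5\to\mathbb{A}^1$, the identity on the first piece and constant on the second.
\end{enumerate}
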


\begin{lem}\label{lemma_dominant_degree1_rational-section}
 Let $X,Y$ be irreducible varieties over a field $k$, and consider a dominant morphism $f: X \maps Y$ of finite type with generically finite fibres. Then $f$ has  degree $1$ if and only if $f$ has a rational section over $k$.
\end{lem}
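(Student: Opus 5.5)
The plan is to translate everything into function fields. Since $f$ is dominant and $X,Y$ are irreducible (we may take them reduced, hence integral), $f$ sends the generic point $\eta_X$ of $X$ to the generic point $\eta_Y$ of $Y$. It therefore induces a field extension $k(Y)\hookrightarrow k(X)$. By Lemma \ref{lemma_fibre_dim} we have $\dim X=\dim Y$, since the fibres are generically finite. Both function fields are finitely generated over $k$ of the same transcendence degree, so $k(X)/k(Y)$ is a finite extension. We take the degree of $f$ to mean $[k(X):k(Y)]$; equivalently, the generic fibre $X_{\eta_Y}$ is a finite scheme over $k(Y)$ whose points all map to $\eta_X$, with residue field $k(X)$.

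\emph{($\Leftarrow$)} Suppose $s: Y \ratlmaps X$ is a rational section, defined on a dense open $U\subset Y$ with $f\circ s=\Id_U$. Then $s(U)$ is irreducible, and its closure $W$ satisfies $f(W)\supset U$, so $W$ dominates $Y$. Since $f|_W$ is injective on $s(U)$, which is dense in $W$, the dimension $\dim W=\dim U=\dim X$. Because $X$ is irreducible, this forces $W=X$, so $s$ is dominant. It then induces $s^*: k(X)\to k(Y)$ with $s^*\circ f^* = \Id_{k(Y)}$. Thus $f^*:k(Y)\to k(X)$ is a field embedding that admits a left inverse $k(X)\to k(Y)$. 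The map $s^*$ is injective (every field homomorphism is) and $s^*\circ f^*$ is surjective, so $s^*$ is an isomorphism and $f^*$ is one too. Hence $[k(X):k(Y)]=1$.

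\emph{($\Rightarrow$)} Suppose $[k(X):k(Y)]=1$, so $f^*$ is an isomorphism of function fields. Varieties are separated and of finite type, so an isomorphism of function fields gives a birational map, by the standard equivalence between dominant rational maps of integral varieties and $k$-embeddings of function fields. Concretely, take affine opens $\operatorname{Spec}A\subset X$ and $\operatorname{Spec}R\subset Y$ with $f(\operatorname{Spec} A)\subset \operatorname{Spec} R$. Write $A=k[a_1,\dots,a_r]$ with each $a_i\in k(X)=k(Y)$. Choose $g\in R$ clearing all the denominators of the $a_i$ as elements of $\operatorname{Frac}(R)$. Then $A\subset R_g$ compatibly with $R\to A$, and $\operatorname{Spec}R_g\to \operatorname{Spec}A$ defines a morphism $s$ on the dense open $D(g)\subset Y$. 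The composite $f\circ s$ corresponds to the inclusion $R\to A\to R_g$, which is the localization map, so $f\circ s$ is the identity on $D(g)$. This $s$ is the desired rational section, defined over $k$ because everything is $k$-algebraic.

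The main obstacle is fixing conventions rather than any deep step. One must decide that ``degree'' means $[k(X):k(Y)]$ rather than the number of geometric points in the fibre; these differ in the inseparable case, but in characteristic $0$, which is our setting over $\Q$, they agree. One also needs the finiteness of the extension, which comes from Lemma \ref{lemma_fibre_dim}. In the ($\Leftarrow$) direction, the one point needing care is showing that the image of the rational section is dense in $X$; the dimension count above handles it.
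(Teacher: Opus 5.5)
Your proof is correct and follows essentially the same route as the paper: for ($\Leftarrow$) you use $s^*\circ f^*=\Id_{k(Y)}$ to force $f^*$ to be an isomorphism of function fields, and for ($\Rightarrow$) you realize $(f^*)^{-1}$ by a morphism on an affine chart. Your version is more careful than the paper's at two points. First, you check that the rational section $s$ is dominant, so that $s^*$ is defined on function fields. Second, you build the section only on $D(g)$ via $R\subset A\subset R_g$, whereas the paper applies \cite[Ch. I Prop. 3.5]{Har77} as if $(f^*)^{-1}$ came from a map of coordinate rings.
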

  We defer for now the proof of Lemma $\ref{lemma_dominant_degree1_rational-section}$, and see   how it applies to the case (II).

Case (II):  By Lemma \ref{lemma_fibre_dim}, since $\pi: X \maps \Pbb^{n-1}$ is a   dominant morphism of finite type with generically finite fibres, it follows that $\dimP X = \dimP \Pbb^{n-1}$. Next, by Lemma \ref{lemma_dominant_degree1_rational-section},   $\pi: X \maps \Pbb^{n-1}$ has degree 1 if and only if $\pi$ has a rational section over $\Q$. By property (ii) in the definition of thin set, $\pi$ does not have a rational section over $\Q$, and thus it must be that $\pi$ has degree at least 2. Thus case (II) leads directly to the notion of a thin set of type II.

In preparation for proving Lemma \ref{lemma_dominant_degree1_rational-section} it is useful to set some terminology. For varieties $X,Y$, a rational map $\pi: X \ratlmaps Y$ is a morphism defined on an open dense set in $X$. (More precisely, it is an equivalence class of pairs $\langle U, \phi_U \rangle$ where $U$ is a non-empty open subset of $X$ and $\phi_U$ is a morphism from $U$ to $Y$, and $\langle U, \phi_U \rangle$ is equivalent to $\langle V, \phi_V \rangle$ if $\phi_U$ and $\phi_V$ agree on $U \cap V$; see \cite[Ch. I Section 4]{Har77}, from which the definitions in this discussion are taken.) A section of a morphism $f: X \maps Y$ is a morphism $s: Y \maps X$ such that $f\circ s$ is the identity morphism on $Y$. A rational section $s$ of $f: X \maps Y$ is a rational map $s: Y \ratlmaps X$ such that $f \circ s = \mathrm{id}_Y$, as a rational map (i.e. on an open dense subset of $Y$). 
To consider the notion of the degree of a morhpism $f$, we recall another fact:
\begin{lem} \label{lemma_induced_injective}
For   varieties $X,Y$ over a field $k$,  a generically finite dominant morphism $f: X \maps Y$ yields an induced map 
\beq\label{induced_map_injective} 
f^* : k(Y) \rightarrow k(X)
\eeq
on the corresponding function fields, defined by pulling back rational functions from $Y$ to $X$; that is to say, acting by $f^*: r \mapsto r\circ f$. 
The induced map $f^*$ is  injective, under the hypothesis that $f$ is dominant. 
\end{lem}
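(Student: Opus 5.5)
The plan is to check first that $f^*$ is well defined on rational functions, and then to show injectivity by using the fact that $f^*$ is a ring homomorphism between fields.

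For well-definedness, take $r \in k(Y)$ and represent it by a pair $\langle V, r_V\rangle$, where $V \subset Y$ is a nonempty open set and $r_V \in \mathcal{O}_Y(V)$ is regular on $V$. The preimage $f^{-1}(V)$ is open in $X$ because $f$ is continuous. It is nonempty because $f$ is dominant: $f(X)$ is dense in $Y$, so it meets the nonempty open set $V$.

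Since $X$ is irreducible, the nonempty open set $f^{-1}(V)$ is dense in $X$. Hence $\langle f^{-1}(V), r_V\circ f|_{f^{-1}(V)}\rangle$ defines an element of $k(X)$.

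If $\langle V, r_V\rangle$ and $\langle V', r'_{V'}\rangle$ represent the same $r$, then $r_V$ and $r'_{V'}$ agree on $V\cap V'$. Therefore the two pullbacks agree on $f^{-1}(V\cap V')$, which is again a nonempty open set by dominance. So $f^*(r)$ does not depend on the representative. This is the only place where dominance is used in defining $f^*$. Without it, $f^{-1}(V)$ could be empty, for instance when $V$ is the complement of $\overline{f(X)}$, and no class in $k(X)$ would be produced.

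Pullback commutes with sums and products on common domains of definition, and it sends $1$ to $1$. Thus $f^*$ is a homomorphism of rings with identity. Its kernel is an ideal of the field $k(Y)$ not containing $1$, so the kernel is $\{0\}$, and $f^*$ is injective.

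Concretely, if $r\neq 0$ then $1/r$ exists in $k(Y)$. Then $f^*(r)\,f^*(1/r) = f^*(1) = 1$, so $f^*(r)\neq 0$.

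Equivalently, if $r\circ f$ vanished on a nonempty open subset of $X$, then $r$ would vanish on the image of that subset. Since $X$ is irreducible, every nonempty open subset of $X$ is dense, so its image under $f$ is dense in $Y$. Then $r$ vanishes on a dense subset of its domain $V$, and being regular, it is identically $0$.

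The only step needing care is verifying that the pulled-back domains are nonempty, and this is exactly where dominance enters. The generic finiteness hypothesis is not needed for injectivity. It is only used afterwards, to make $k(X)$ a finite extension of $f^*k(Y)$, whose degree is by definition $\deg f$.
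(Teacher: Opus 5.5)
Your proof is correct, but its main line differs from the paper's. The paper argues directly on the kernel. If $r\circ f=0$ with $r$ regular on a dense open $U_Y$, then $r$ (written locally as $P/Q$) vanishes on $U_Y\cap f(X)$. That set is dense in $Y$ by dominance and irreducibility, so $r=0$. The paper never checks that $f^*$ is well defined.

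You instead use the dominance hypothesis for well-definedness: the preimages $f^{-1}(V)$ are nonempty, and the pullbacks are compatible on $f^{-1}(V\cap V')$. Injectivity then comes for free, because a unital ring homomorphism out of a field has zero kernel. The paper itself invokes this same fact later, for $s^*$, in the proof of Lemma \ref{lemma_dominant_degree1_rational-section}. Your ``equivalently'' paragraph (the image of a dense open subset of $X$ is dense in $Y$, and a regular function vanishing on a dense subset of its domain is zero) is essentially the paper's density argument.

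What your route buys is a clean separation. It shows that dominance is exactly what is needed for $f^*$ to exist at all. It avoids writing $r$ locally as a quotient of polynomials, and it supplies the well-definedness step the paper omits. What the paper's density argument buys is independence from the target being a field. It therefore also shows, for example, that $f^*:A(Y)\to A(X)$ on coordinate rings is injective for a dominant morphism of affine varieties, where your field-theoretic shortcut is unavailable.

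You are also right that generic finiteness plays no role in the injectivity.
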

\begin{proof} Suppose that a rational function $r \in k(Y)$ (defined on a dense open set $U_Y \subset Y$) lies in the kernel of $f^*$, so that $r \circ f = 0$.  Without loss of generality we may assume that $r=P/Q$, for some $P,Q\in\mathbb{C}[\bfX]$ on $U_{Y}$; note also that since $U_Y$ is dense and open then its complement $Y \setminus U_Y$ is nowhere dense in $Y$. Since $r(f(x))=0$ for every $x \in X$ such that $f(x) \in U_Y$, it follows that upon defining $V:= U_Y \cap f(X)$, the restriction $\left. r\right|_{V}=0=\left. P\right|_{V}$.   
Now under the  hypothesis that $f:X \maps Y$ is dominant, the image $f(X)$ is dense in $Y$, and this implies that the set $U_{Y}\cap f(X)$ is also dense in $Y$. (For upon writing $f(X)= (U_{Y}\cap f(X))\cup  ((Y\setminus U_{Y})\cap f(X))$, the fact that $((Y\setminus U_{Y})\cap f(X))$ is nowhere dense (as a subset of the nowhere dense set $Y \setminus U_Y$)  forces $U_Y \cap f(X)$ to be dense.) Hence $P$ is zero on the dense set $U_{Y}\cap f(X)$ in $Y$, so it must be the zero polynomial, and consequently  $\left. P\right|_{U_{Y}}=0$. This implies  $\left. r\right|_{U_{Y}}=0$ on the dense open set $U_Y$,  so $r$ lies in the same equivalence class as the zero map on $Y$; that is, we may say $r=0$ (see Dfn. 29.49.1 in   \cite[\href{https://stacks.math.columbia.edu/tag/01RR}{Section 01RR}]{StaPro}). Thus $f^*$ is injective, as claimed.  
\end{proof}

With the knowledge that the induced map is injective, the degree of $f$ is then defined to be 
\[ \deg f:= [k(X): f^*k(Y)],\]
the degree of the function field $k(X)$ as a (finite) extension of fields of $f^*k(Y)$.  Notice that the fact that $f$ is a dominant, generically finite, morphism of finite type and $X,Y$ are integral schemes guarantees that $\deg f<\infty$. Indeed, since $f:X\rightarrow Y$ is generically finite, we can find two affine open subsets $U\subset X$, and $V\subset Y$ such that $\left.f\right|_{U}: U\rightarrow V$ is finite. Since $f_{|U}$ is finite and dominant ($U$ and $V$ are dense in $X$ and $Y$ respectively and $X,Y$ are irreducible), we have that $[k(U):k(V)]<\infty$. On the other hand since $U,V$ are dense in the integral schemes $X,Y$, one has $k(U)=k(X)$, and $k(V)=k(Y).$ See \cite[Chapter II Exercise 3.7]{Har77}, and also Dfn. 29.51.8 in \cite[\href{https://stacks.math.columbia.edu/tag/02NV}{Section 02NV}]{StaPro}. 

Next,  we consider in particular the case of degree 1:
\begin{lem}\label{lemma_birational}
    Under the hypotheses of Lemma \ref{lemma_induced_injective}, $f:X \maps Y$ has degree 1 if and only if $X$ and $Y$ are birationally equivalent.
\end{lem}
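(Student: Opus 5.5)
Both directions reduce to translating between the field inclusion $f^*k(Y)\subseteq k(X)$ and rational maps. The tool is the standard equivalence of categories, \cite[Ch. I, Thm. 4.4]{Har77} in the variety setting and its scheme-theoretic analogue for integral schemes of finite type over $k$. It says that dominant rational maps $X\ratlmaps Y$ correspond contravariantly to $k$-algebra homomorphisms $k(Y)\maps k(X)$, and that $X,Y$ are birationally equivalent if and only if $k(X)\cong k(Y)$ as $k$-algebras. Throughout, $X$ and $Y$ are irreducible, as in the setting where this lemma is applied.

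First suppose $\deg f=1$. Then $[k(X):f^*k(Y)]=1$, so $f^*k(Y)=k(X)$. Since $f^*$ is injective by Lemma \ref{lemma_induced_injective}, $f^*$ is an isomorphism of $k$-algebras $k(Y)\xrightarrow{\sim}k(X)$. The inverse $(f^*)^{-1}\colon k(X)\maps k(Y)$ then corresponds to a dominant rational map $g\colon Y\ratlmaps X$.

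Functoriality of the correspondence gives $(f\circ g)^*=g^*\circ f^*=\mathrm{id}_{k(Y)}$ and $(g\circ f)^*=\mathrm{id}_{k(X)}$. Faithfulness then gives $f\circ g=\mathrm{id}_Y$ and $g\circ f=\mathrm{id}_X$ as rational maps, so $f$ is birational. To make this concrete without quoting the theorem, I would work on affine opens $U\subset X$ and $V\subset Y$ with $f(U)\subset V$. Write $k[U]=k[u_1,\dots,u_m]$. Each $u_i$ equals $f^*(r_i)$ for some $r_i\in k(Y)$, and the $r_i$ are all regular on some dense open $V'\subset V$. This defines $g=(r_1,\dots,r_m)\colon V'\maps U$, and both compositions are the identity on dense opens.

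For the converse, one first has to fix what "birationally equivalent" means. Read literally (an arbitrary birational map $Y\ratlmaps X$, unrelated to $f$), the converse is false. For example, $z\mapsto z^2$ on $\Pbb^1$ has degree $2$ although source and target are the same variety. So the statement has to be read as "$f$ itself is a birational equivalence", meaning there is a rational inverse $g$ with $f\circ g=\mathrm{id}_Y$ and $g\circ f=\mathrm{id}_X$; this is also the form needed for Lemma \ref{lemma_dominant_degree1_rational-section}. With that reading, $g^*\circ f^*=\mathrm{id}_{k(X)}$ by functoriality. Hence $f^*$ is surjective onto $k(X)$, so $f^*k(Y)=k(X)$ and $\deg f=1$.

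The main obstacle is this interpretive point together with verifying functoriality for rational maps. Pullback along a composite $f\circ g$ is well defined only because $g$ is dominant, so that $f\circ g$ is defined on a dense open set and pullbacks of rational functions make sense. I would isolate that check as a short remark before running the argument above.
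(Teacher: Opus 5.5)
Your proof is correct, and its forward direction is essentially the paper's. The paper likewise observes that $\deg f=1$ means the injective map $f^*$ is also onto, hence an isomorphism $k(Y)\to k(X)$, and then cites Hartshorne (Ch.~I, Cor.~4.5) to conclude birational equivalence.

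The difference is in the converse. The paper runs the same chain backwards, but Cor.~4.5 only converts a birational equivalence into \emph{some} isomorphism $k(X)\cong k(Y)$. That says nothing about whether $f^*$ itself is onto. Your example $z\mapsto z^2$ on $\Pbb^1$ (degree $2$, with $X=Y$) shows the ``if'' direction is false as literally stated. Your reading, that $\deg f=1$ if and only if $f$ itself is a birational map, is the statement that is actually true. Working with the full correspondence (Thm.~4.4 rather than Cor.~4.5) is exactly what lets you keep track of $f$. This reading is also consistent with how the lemma is used: the proof of Lemma \ref{lemma_dominant_degree1_rational-section} invokes only the forward direction, in the form ``$f^*$ is an isomorphism,'' and proves its own converse directly.

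Your explicit construction of $g=(r_1,\dots,r_m)$ on a dense open $V'$ is a useful addition. Since $(f^*)^{-1}$ need not carry coordinate rings into coordinate rings, one really must shrink $Y$ before producing a morphism.

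One slip to fix: in the converse, $g\circ f=\mathrm{id}_X$ gives $(g\circ f)^*=f^*\circ g^*=\mathrm{id}_{k(X)}$. The composite $g^*\circ f^*$ you wrote is a map $k(Y)\to k(Y)$. With the order corrected, surjectivity of $f^*$ is immediate.
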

To parse this, recall that a birational map $\phi: X \ratlmaps Y$ is a rational map with an inverse   $\psi: Y \ratlmaps X$ such that $\psi \circ \phi = \mathrm{id}_X$ as a rational map, and $\phi \circ \psi = \mathrm{id}_Y$ as a rational map. If such a birational map exists, $X$ and $Y$ are said to be birationally equivalent. 
\begin{proof}
By definition, $f$ has degree 1 precisely when  $[k(X): f^*k(Y)]=1$, so that (equivalently) the induced map $f^*: k(Y) \maps k(X)$ is not only injective but also surjective; that is to say, the function fields $k(X)$ and $k(Y)$ are isomorphic, with isomorphism $f^*$. 
It is a classical result that $k(X)$ and $k(Y)$ are isomorphic   precisely when $X$ and $Y$ are birationally equivalent \cite[Ch. I Cor. 4.5]{Har77} (see also \cite[Ch. 29 Dfn. 49.11]{StaPro} and subsequent remark). 
\end{proof}

\begin{proof}[Proof of Lemma \ref{lemma_dominant_degree1_rational-section}]
Now to prove Lemma \ref{lemma_dominant_degree1_rational-section}, suppose that a dominant morphism $f:X \maps Y$ of finite type with generically finite fibre has degree 1, so that by the discussion above, $X$ and $Y$ are birationally equivalent, with isomorphism $f^*: k(Y) \maps k(X)$.
  After passing to an affine cover for $X$ and $Y$, we may proceed in the case that $X$ and $Y$ are affine. Then by \cite[Ch. I Prop. 3.5]{Har77} there is a bijection between $\mathrm{Hom}(X,Y)$ and $\mathrm{Hom}(A(Y),A(X))$ (in which $A(X)$ denotes the coordinate ring, with $k(X)$ isomorphic to the quotient field of $A(X)$, and analogously for $Y$ \cite[Ch. I Thm. 3.2]{Har77}). Thus there exists a morphism $g: Y\rightarrow X$ such that $g^*=(f^{*})^{-1}$. Now take $f\circ g$; then since $(f\circ g)^{*}= g^{*} \circ f^*=\Id_{k(Y)}$, then by a second application of \cite[Ch. I Prop. 3.5]{Har77}, we conclude that $f\circ g=\Id_{Y}$. Since a morphism is a rational map (defined on $Y$ itself), this suffices to show $f$ has a rational section.
 
In the other direction, suppose that a dominant morphism $f: X \maps Y$ has a rational section, so that there is a rational map $s: Y \ratlmaps X$ (defined on a dense open subset $U_Y$ of $Y$) such that $f \circ s =\mathrm{id}_Y$ as a rational map.    Then by \cite[Ch. I Prop. 3.5]{Har77}, 
\beq\label{must_be_surjective} 
(f\circ s)^* =  s^*  \circ f^*=\mathrm{Id}_{k(Y)}.
\eeq 
Since $f^*: k(Y) \maps k(X)$ and $s^* : k(X) \maps k(Y)$,   the property (\ref{must_be_surjective}) can only be achieved if $s^*$ is surjective. A  ring homomorphism acting on a field (here $k(X)$)  is either injective or the zero map, and since $s^*$ is surjective it is not the zero map, so $s^*$ is injective as well, and hence provides an isomorphism so that   $k(X)\cong k(Y)$. Hence $\deg [k(X):k(Y)]=1$, and finally this implies $\deg f=1$, as desired.
\end{proof}

We close this section  with a few final remarks.  Recall the notation of \S \ref{sec_types}.
For clarity, let us confirm that a set that satisfies the condition of being type I is a thin set:  one may take $X=V$ and $\pi: X \maps \Pbb^{n-1}$ to be the inclusion map, for which the generic fibre is empty since $V$ is a proper closed subset.  Similarly, let us confirm that a set of type II is a thin set:  one may take $X=Z$ and $\pi:Z\rightarrow \bP^{n-1}$ the dominant morphism with generically finite fibres; recall that by hypothesis $\dimP Z = \dimP(\bP^{n-1})$. 
Since $\deg(\pi) \geq 2$, $\pi$ has no rational section over $\Q$, by Lemma \ref{lemma_dominant_degree1_rational-section}.
Finally, the space of thin sets is closed under finite union, that is, any finite union of thin sets is thin.

\subsection{Proof of Lemma \ref{lemma_thin_set_two_types}}\label{sec_proof_lemma_thin_set_two_types}

Given  a thin set $M$ with corresponding variety $X$ and morphism $\pi: X \maps \Pbb^{n-1}$ such that $M \subset \pi(X(\Q))$, decompose $X=\cup_{i=1}^{m}V_{i}$ into irreducible components. Then for every $i=1,...,m$ we have a morphism $\pi_i = \left.\pi\right|_{V_{i}}:V_{i}\rightarrow \mathbb{P}^{n-1}$, and upon setting $M_i = M \cap \pi_i (V_i(\Q))$ then $M = \cup_{i=1}^n M_i$. The discussion in \S \ref{sec_classifying_two types} shows that if $\left.\pi\right|_{V_{i}}$ is dominant then $M_i$ is  a thin set of type II, while if not, then $M_i$ is a thin set of type I. This completes the proof of Lemma \ref{lemma_thin_set_two_types}.

\subsection{Proof of Lemma \ref{lemma_typeII_poly_interp}}\label{sec_proof_typeII_poly_interp}

To prove the lemma, we adapt portions from the proof of \cite[Lemma 3]{Bro03N}, following a remark of Serre \cite[p. 122]{Ser97}; see also Proposition \ref{prop_BCSSV_affineII_projII}, which is due to \cite{BCSSV25x}.
Define $U\subset Z$ to be the affine variety $\pi^{-1}(\Abb^n)$, and let $k(U)$ denote the function field of $U$. (Note for later reference that $Z\setminus U$ is a proper closed subset of $Z$. Certainly $Z\setminus U$ is a proper subset of $Z$, since otherwise $U$ would be empty,   contradicting the fact that $\pi: Z \maps \Abb^n$ is dominant. Additionally, $Z\setminus U$ is closed in $Z$, since the pre-image $U$ of the open set $\Abb^n$ is open, under the morphism $\pi$.) Let $t$ be a generator of $k(U)$ over $k(\Abb^n)=\Q(X_1,\ldots,X_n)$, chosen so that its minimal polynomial, say 
\[ F(Y,X_1,\ldots,X_n)=Y^d + f_1(\bX)Y^{d-1} + \cdots + f_d(\bX),\] 
has coefficients $f_j(\bX)$ that are polynomials with integral coefficients. Now consider the affine variety $W \subset \Abb^{n+1}$ defined by $F(Y,\bX)=0$, as well as the projection map $p: W \maps \Abb^n$ given by $p(Y,\bX)=\bX$. By construction, the function fields $k(U)$ and $k(W)$ are isomorphic (indeed, they are both isomorphic to  $\Q(X_1,\ldots,X_n)(t)$). Consequently, by Lemma \ref{lemma_birational}, $U$ and $W$ are birationally equivalent, and in particular there is a  (bi)rational map $r: U \ratlmaps W$, with an accompanying (bi)rational map $s : W \ratlmaps U$ such that $s \circ r =\mathrm{id}_U$ and $r \circ s= \mathrm{id}_W$ (in each case, as a rational map, that is, on an open dense subset).

From this, observe that there is a proper closed subset $U_0 \subset U$ such that 
\[\pi':=\left. \pi \right|_{U \setminus U_0}: U \setminus U_0 \maps \Abb^n\] 
factors through $p: W \maps \Abb^n$. Indeed, by the existence of the birational map $r: U \ratlmaps W$ there is some open dense subset of $U$ upon which $r$ is a morphism; we define $U_0$ to be the complement of that subset. Then $\pi' := \left. \pi \right|_{U \setminus U_0}= p \circ r$, as in the following diagram:
 \[
  \begin{tikzcd}
    U \setminus U_0  \arrow{dr}{\pi'}  \arrow[dashrightarrow]{d}{r}  \\
   W \arrow[rightarrow]{r}{p} & \mathbb{A}^n
  \end{tikzcd}
\]

Finally, define $V \subset Z$ by $V=U_0 \cup (Z \setminus U)$. First note that $V$ is a proper closed subset of $Z$, since as observed earlier, each of $U_0$ and $Z\setminus U$ is a proper closed subset of $Z$.
Second, by arguing as above, note that $\pi'':= \left. \pi \right|_{Z \setminus V} : Z \setminus V \maps \Abb^n$ factors through $p: W \maps \Abb^n$. That is to say, on $Z \setminus V$, $\pi'' = p \circ s$ for some morphism $s: Z \setminus V \maps W$.

By applying $\pi$ to $Z=V \cup (Z \setminus V)$, we then see that
\[
   \pi(Z)  \subset (\left. \pi  \right|_V(V) \cup \left. \pi\right|_{Z\setminus V}(Z \setminus V) )\\
 = (\left. \pi\right|_V(V) \cup (p \circ s) (Z \setminus V) ) \subset  (\left.\pi\right|_V(V) \cup p (W)) ).
\]
In particular, 
\[ M \subset \pi(Z(\Q))  \subset  (\left. \pi\right|_V(V(\Q)) \cup \{\bx \in \Q^n: \text{$F(Y,\bx)=0$ is solvable over $\Q$}\}  ).
\]
The last step is to verify:
  \begin{lem}\label{lemma_piV_thin_set} 
  If $\pi: Z \maps \Abb^n$ is a dominant morphism with generically finite fibres of degree $d \geq 2$ and $V \subsetneq Z$ is a proper closed subset, then  $\left. \pi \right|_V(V)$ is a thin set of type I in $\Abb^n$.
  \end{lem}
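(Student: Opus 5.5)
We need to show that the image $\pi|_V(V)$ lies in a proper Zariski-closed subset $W'\subsetneq \Abb^n$. Then, by the type I definition (with $X=V$ and $\pi|_V$), it is a thin set of type I; equivalently, there is a nonconstant polynomial $G$ vanishing on it. The plan is a dimension count. Since $Z$ is irreducible with $\dimA Z = n$ (Lemma \ref{lemma_fibre_dim}), every irreducible component $V_j$ of the proper closed subset $V$ satisfies $\dim V_j \le n-1$. Indeed, a closed subset of an irreducible noetherian space with the same dimension must be the whole space. We treat each component separately and take the union at the end: a finite union of proper closed subsets of the irreducible space $\Abb^n$ is still proper closed.

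For a single component $V_j$, restrict $\pi$ to get a morphism $\pi_j: V_j \maps \Abb^n$ of finite type between varieties. Let $Y_j=\overline{\pi_j(V_j)}$, which is irreducible as the closure of the image of an irreducible space. Then $\pi_j: V_j\maps Y_j$ is dominant, and the induced injection of function fields $k(Y_j)\hookrightarrow k(V_j)$ (as in Lemma \ref{lemma_induced_injective}) gives
\[ \dim Y_j=\mathrm{trdeg}\, k(Y_j)\le \mathrm{trdeg}\, k(V_j)=\dim V_j\le n-1.\]
Equivalently, one can apply Lemma \ref{lemma_fibre_dim} to $\pi_j:V_j\maps Y_j$, which gives $\dim V_j-\dim Y_j\ge 0$. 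Hence $Y_j\subsetneq \Abb^n$, and $\pi|_V(V)\subset \bigcup_j Y_j$ is a proper closed subset. Restricting to rational points gives $\pi|_V(V(\Q))\subset (\bigcup_j Y_j)(\Q)$. Choosing any nonzero $G_j$ in the ideal of $Y_j$, with coefficients in $\Q$ since $Y_j$ is defined over $\Q$, and setting $G=\prod_j G_j$ produces the polynomial description.

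The only delicate point is bookkeeping over $\Q$ rather than $\overline{\Q}$. The components $V_j$ should be taken as $\Q$-irreducible components, and the closures are Zariski closures over $\Q$, so the $Y_j$ are defined over $\Q$; dimension is insensitive to base field extension, so the count above is unaffected. I expect no real obstacle; the hypothesis $d\ge 2$ plays no role here and is only there to match the setting of Lemma \ref{lemma_typeII_poly_interp}.
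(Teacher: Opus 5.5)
Your proof is correct, but it uses a different mechanism from the paper's.

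The paper's route:
\begin{itemize}
\item it claims $\pi|_V$ is finite on some open $U\subset V$, and deduces $\dim\pi(U)=\dim U\le n-1$;
\item it asserts that $\pi(U)$ is open in $\pi(V)$, to transfer this bound to $\pi(V)$;
\item it then shows separately that any subset of $\Abb^n$ of dimension $<n$ lies in a proper closed subset.
\end{itemize}

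Your route: you decompose $V$ into irreducible components and work directly with the closures $Y_j=\overline{\pi(V_j)}$. You use the inclusion $k(Y_j)\hookrightarrow k(V_j)$ induced by the dominant map $V_j\to Y_j$ to get $\dim Y_j\le\dim V_j\le n-1$. Injectivity here needs only dominance, which is all the paper's proof of Lemma \ref{lemma_induced_injective} actually uses. Because you work with closures from the outset, the paper's last step is built in.

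Your route is also more robust:
\begin{itemize}
\item It uses neither generic finiteness nor $d\ge 2$.
\item The paper's first step implicitly needs $\pi|_V$ to be generically finite on $V$, and that does not follow from generic finiteness of $\pi$ on $Z$. For example, $(u,v)\mapsto(u^2,uv)$ on $\Abb^2$ has degree $2$, yet it contracts $V=\{u=0\}$ to a point, so $\pi|_V$ is finite on no nonempty open subset of $V$.
\item It sidesteps the claim that $\pi(U)$ is open in $\pi(V)$, which would itself need justification.
\end{itemize}

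One small caution. Your parenthetical alternative via Lemma \ref{lemma_fibre_dim} does not follow from that lemma as stated. A lower bound $r$ on fibre dimensions says nothing when $r<0$; you would need the companion fact that the generic fibre has dimension exactly $r$. Your transcendence-degree argument stands on its own, so this does not affect the proof.
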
 
  \begin{proof} 
  It suffices to show $\left. \pi \right|_V(V)$ is contained in a proper closed subset of $\Abb^n$, which we will do in two steps. First we will confirm that $\dimA \left. \pi \right|_V(V) \leq n-1$ in $\Abb^n$. Second, any set $W \subset \Abb^n$ with dimension $<n$ is contained in a proper closed subset. 

 For the first step, we note that there exists an open subset $U\subset V$ on which $\left.\pi\right|_V$ is finite, since $\pi$ has generically finite fibers. As $\left.\pi\right|_U$ is finite, by Lemma \ref{lemma_fibre_dim} we see $\left.\pi\right|_U(U) = \dimA U = \dimA V \leq n-1$. Additionally, $\left.\pi\right|_U(U)$ is an open subset of $\left.\pi\right|_V (V)$ (with respect to the topology on $\left.\pi\right|_V (V)$), so $\dimA \left.\pi\right|_U(U) = \dimA \left.\pi\right|_V(V)\leq n-1$. 

For the second step, let us verify that  if $W \subset  \Abb^n$ has $\dimA W < n$, then $W$ is contained in a proper closed subset of $\Abb^n$. For suppose on the contrary that $W$ is not contained in any proper closed subset. Then for every (nonempty) open set $U \subset \Abb^n$, $W\not\subset U^c$, so $W \cap U$ is nonempty. Thus $W$ is dense in $\Abb^n$, so that $\dimA W =\dimA \overline{W} = \dimA \Abb^n$, a contradiction.
\end{proof}

\begin{rem}[Inexplicit aspect of Lemma \ref{lemma_typeII_poly_interp}]\label{remark_poly_interp_consequence}

Lemma \ref{lemma_piV_thin_set} confirms that $\left. \pi \right|_V(V)$ is a thin set of type I in $\Abb^n$, but it provides no bound on the degree of this thin set. This inexplicitness presents a barrier to obtaining a uniform upper bound when counting integral points in the thin set of type II considered in Lemma \ref{lemma_typeII_poly_interp}, a subject considered extensively later in the paper. 
  \end{rem}

\subsection{Proof of Lemma \ref{lemma_affine_version_is_thin}: simple relation between projective and affine thin sets}\label{sec_simple_relation_projective_affine}
Let $M$ be the given thin set in $\Pbb^{n-1}$.
    Since $M$ is a finite union of   type I and type II thin sets in $\Pbb^{n-1}$, it suffices to consider those cases individually. 
    
    \subsubsection{Type I} Suppose $M\subset \Pbb^{n-1}$ is of type I, so that for some Zariski-closed (irreducible) variety $V \subsetneq \mathbb{P}^{n-1}$, $M \subset V(\Q)$. 
    Denote homogeneous coordinates in $\Pbb^{n-1}$ by $[x_1: \cdots : x_n]$. For each $i$ let $U_i$ be the open subset $\Pbb^{n-1} \setminus \{x_i=0\}$. Then  for each $i$, $U_i$ can be identified with $\Abb^{n-1}$ by 
    \[ [x_1: \cdots :x_i: \cdots : x_n] \mapsto (x_1/x_i, \ldots,x_n/x_i),\]
omitting the $i$th place in the image.   Then  $V_i:=V \cap U_i$ is an open subset of $V$ and can be identified with an affine variety  $\widetilde{V}_i \subset \Abb^{n-1}$ (which has $V$ as its projective closure).  This affine variety is closed, and is irreducible since its projective closure $V$ is irreducible   (see \cite[
\S4.5 Ex. 1]{Sha13} or \cite[\S5.5 Prop.]{Rei88}).

 On the other hand, $\Abb^n \setminus \{\boldsymbol{0}\}$ can be written as a  union $\cup_{i=1}^n A_i$ in which $A_i = \Abb^n \setminus \{x_i=0\}$.  Each set $A_i$ can be identified with $\Pbb^{n-1}$ by 
 \[p_i:(x_1,\ldots,x_i,\ldots,x_n) \mapsto [x_1: \cdots :x_i: \cdots : x_n] = [x_1/x_i: \cdots :1: \cdots : x_n/x_i].\]
 In the definition of $M'$ in the lemma, we will regard the map $p(\bx)$ as acting by $p_i(\bx)$ for $\bx \in A_i$.  For $M'$ as in the lemma,
write $M' = \cup_{i=1}^n M_i' $ with $M_i = M' \cap A_i$. 
Then for any $\bx \in M_i'$, $p(\bx) \in U_i$ and $p(\bx) \in M \subset V$, so that $p(\bx) \in V_i = V\cap U_i$. 
Consequently, $M_i' \subset \widetilde{V}_i (\Z) \subset \Abb^{n-1}(\Z) \subsetneq \A^n(\Z)$. Thus $M_i'$ is a thin set of type I. 
In conclusion $M'$ is a finite union of thin sets of type I.   

\subsubsection{Type II}
Next, suppose $M\subset \Pbb^{n-1}$ is of type II, so there is an irreducible projective  algebraic variety $Z \subset \P^N$ over $\Q$ with $\dimP  Z = \dimP  (\mathbb{P}^{n-1})$, and a dominant morphism $\pi: Z \rightarrow \mathbb{P}^{n-1}$ with generically finite fibres,  of degree $d \geq 2$, with $M \subset \pi (Z(\Q))$. 
Similar to the argument above, define for each $j=1,\ldots,N$ the affine cover $Z_j=Z \cap U_j$ for $U_j = \Pbb^N \setminus \{x_j=0\}$; then $Z_j$ is an open dense subset of $Z$. Consider the morphism $\pi_j : Z_j \maps \Pbb^{n-1}$ defined by $\pi_j=\left. \pi \right|_{Z_j}$. As in the discussion of \S \ref{sec_classifying_two types}, there are two cases: if $\pi_j$ is not dominant, then $\pi_j(Z_j)$ is a thin set of type I, so the previous case applies.

On the other hand, if $\pi_j$ is dominant, we claim $\pi_j(Z_j)$ is a thin set of type II. This last observation merely requires checking that $\pi_j$ has degree at least 2. By Lemma \ref{lemma_dominant_degree1_rational-section}, it suffices to show that if $\pi_j$ has a rational section then $\pi$ has a rational section. Suppose $\pi_j$ has a rational section, say a rational map $s: \Pbb^{n-1} \ratlmaps Z_j$ defined on a dense open set $O \subset \Pbb^{n-1}$ such that $\pi_j \circ s = \mathrm{id}_{\Pbb^{n-1}}$ as a rational map, or in other words $\pi_j \circ s - \mathrm{id}_{\Pbb^{n-1}}$ is the zero map, as a rational map. Since $Z_j$ is open and dense in $Z$, arguing as in the proof of Lemma \ref{lemma_induced_injective} shows that   $\pi \circ s = \mathrm{id}_{\Pbb^{n-1}}$. Thus $s$ is a rational section of $\pi$, and the claim is verified.

Finally, define for each $j=1,\ldots,N$, and $i=1,\ldots,n$,
\[ M'_{i,j} = \{ \bx \in A_i: p(\bx) \in  \pi_j(Z_j(\Q))\}.\]
Then for the affine set $M'$ as defined in the lemma,
\[ M' \subset \bigcup_{i,j} M'_{i,j}.\]
Each set $M_{i,j}' \subset \A^n$ is a thin set of type I or type II, so that $M'$ is a union of thin sets, and hence is a thin set, and the lemma is proved.  
 
\begin{rem} The elementary ideas of  Lemma \ref{lemma_affine_version_is_thin}
 can be compared to Proposition  \ref{prop_BHBS06_affineI_projI}, which is a  method for passing from a projective thin set of type I to a particular kind of affine thin set of type I, and to Proposition \ref{prop_BCSSV_affineII_projII}, which is a  method  of passing from a projective thin set of type II to the union of a particular kind of affine thin set of type II, and a thin set of type I.
\end{rem}

\section{Survey of  type I thin sets and Dimension Growth}\label{sec_lit_typeI}
In this section we survey results on counting points in thin sets of type I.
 \subsection{Type I, projective}
Consider a thin set $M \subset \Pbb^{n-1}(\Q)$ of type I in the projective setting. Let $V$ be a Zariski-closed subvariety $V \subsetneq \Pbb^{n-1}$ such that $M \subset V(\Q)$, so that
\beq\label{projI_DGC_relation}
N_{\Pbb^{n-1}}(M,B) \leq \#\{ \bx \in V (\Q): \bx \in \Z^n, \gcd(x_1,\ldots,x_n)=1, \|\bx\| \leq B\}.
\eeq
Thus in the type I setting, the central aim is to bound, for any irreducible projective variety $V \subset \Pbb^N$,  
\[ N_{\proj}(V,B) := \#\{ \bx \in V\cap \Pbb^{N}(\Q): \bx \in \Z^n, \gcd(x_1,\ldots,x_{N+1})=1, \|\bx\| \leq B\}.\]
We record a trivial bound:
 
\begin{lem}[Trivial bound, projective variety]\label{lemma_projI_trivial_bound}
Given a  projective variety $V \subset \Pbb^{N}$, then \[
N_{\proj}(V,B) \ll_{\deg V,\dimP V,N} B^{\dimP V+1}.\]
\end{lem}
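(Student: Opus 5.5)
The plan is to reduce to the case of an irreducible variety and then argue by induction on $r=\dimP V$. We use projections to cut $V$ down to a hypersurface, or slice it by hyperplanes. The cleanest route is the slicing one, counting primitive integer points in the affine cone.

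First, decompose $V$ into irreducible components. The number of components and their degrees are bounded in terms of $\deg V$ (the sum of the degrees of the components is at most $\deg V$). So it suffices to treat $V$ irreducible of dimension $r$ and degree $D$. Now let $\widehat V\subset \Abb^{N+1}$ be the affine cone over $V$, of dimension $r+1$ and degree $D$. Then $N_{\proj}(V,B)\le \#\{\bx\in \widehat V(\Z): \|\bx\|\le B\}$. The goal becomes the affine statement: for an affine variety $W\subset\Abb^{M}$ of dimension $k$ and degree $D$,
\[ \#\{\bx\in W(\Z):\|\bx\|\le B\}\ll_{D,M} B^{k}. \]
Applying this with $k=r+1$ gives the lemma.

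The affine statement will be proved by induction on $k$. For $k=0$, $W$ is a finite set of at most $D$ points, so the count is at most $D$. For $k\ge 1$, there is a coordinate $x_i$ that is nonconstant on some component of $W$. Components on which every coordinate is constant are single points and contribute $O_D(1)$. For a component $W'$ with $x_i$ nonconstant, slice by the hyperplanes $x_i=c$ for each integer $|c|\le B$. Each slice $W'\cap\{x_i=c\}$ is a proper intersection, so it has dimension $k-1$. By Bézout it has degree at most $D$, and at most $D$ components. By induction each slice contains $\ll_{D,M}B^{k-1}$ integer points. Summing over the $2B+1$ values of $c$ gives $\ll B^{k}$.

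The only delicate point is the bookkeeping in the slicing step. We must ensure that the intersection with $\{x_i=c\}$ genuinely drops the dimension, which holds because $x_i$ is nonconstant on the irreducible $W'$, so the hyperplane does not contain $W'$. We also need that the degree and number of components stay bounded in terms of $D$ alone, which follows from the refined Bézout inequality (the sum of degrees of components of $W'\cap H$ is at most $\deg W'$). With these, the implicit constants depend only on $\deg V$, $\dimP V$ and $N$, and this yields the lemma.
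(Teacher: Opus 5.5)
Your proof is correct and follows essentially the same route as the paper. You pass to the affine cone over $V$ in $\Abb^{N+1}$ (the paper's $V'=\{\bx : p(\bx)\in V\}$, of affine dimension $\dimP V+1$), then apply an affine trivial bound proved by slicing with the hyperplanes $x_i=c$ and inducting; the only difference is that you induct on $\dim W$, picking a coordinate nonconstant on each component and bounding the slices by refined B\'ezout, whereas the paper's Lemma~\ref{lemma_affineI_trivial_bound} inducts on the ambient dimension with the Schwartz--Zippel bound as base case.
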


This is best possible if $V$ is of degree 1. It is simple to show a non-uniform bound $N_{\proj}(V,B) \ll_{V} B^{\dimP V+1}$. Indeed, upon letting $m=\dimP V$, this bound follows from choosing homogeneous coordinates for $\Pbb^N$ so that projection onto the first $m+1$ coordinates induces a finite morphism $\pi: V \rightarrow \Pbb^m.$ Each fibre of $\pi$ has a finite  number of points (controlled by the degree of $\pi$), and then it suffices to count trivially the points in $\Pbb^m$ of height at most $B$; see remarks following \cite[Ch. 13, Thm. 2]{Ser97}.
A different, but still elementary, argument leads to the uniform bound in Lemma \ref{lemma_projI_trivial_bound}. It relies on a trivial bound in the affine setting (Lemma \ref{lemma_affineI_trivial_bound}, proved independently in a section on affine results).

\begin{proof}
Arguing as in the proof of Lemma \ref{lemma_affine_version_is_thin} in \S \ref{sec_simple_relation_projective_affine} (or as in \cite[Ch. I Cor 2.3]{Har77}), for $V \subsetneq \Pbb^{N}$ of projective dimension $m$, we can define an affine variety 
\[ V' = \{\bx : p(\bx) \in V\} \subset \Abb^{N+1},\]
with
$V' = \cup_{i} V_i'=\cup_i (V' \cap A_i)$, in which $A_i = \Abb^{N+1} \setminus \{x_i=0\}$ for $i=1,\ldots,N+1$.
Decompose $N_\proj (V,B) \leq \sum_i  N_\aff(V_i',B)$. Each   $V_i'$ has affine dimension $ \leq m+1$. A trivial bound in the affine setting then shows   that $N_\aff(V_i',B)\ll_{\deg V_i', \dimA V_i', N} B^{m+1}$ (see Lemma \ref{lemma_affineI_trivial_bound}). This suffices for Lemma \ref{lemma_projI_trivial_bound}. 
\end{proof}
\begin{rem}\label{remark_Dehennin}
In particular, if $V$ is of pure dimension then the dependence in this lemma is at most linear in $\deg V$, following Lemma \ref{lemma_affineI_trivial_bound}.  Dehennin has recently conjectured that the dependence on degree in this projective setting  should be at most sublinear  \cite[Conj. 1.3]{Deh26x}.
\end{rem}

 In   the setting of (\ref{projI_DGC_relation}), $V \subsetneq \Pbb^{n-1}$ has $\dimP V \leq n-2$, and Lemma \ref{lemma_projI_trivial_bound} implies
 \beq\label{projI_trivial_thin_set}
 N_{\Pbb^{n-1}}(M,B) \leq N_\proj(V,B) \ll_{\deg V, n} B^{n-1}.
 \eeq
Thus certainly Conjecture \ref{conj_Serre_thin_proj} is met for projective thin sets of type I. Instead, in this case the relevant deep question is:
 
\begin{conj}[Dimension Growth Conjecture, projective]\label{conj_DGC}
For any irreducible projective variety $V \subset \Pbb^N$ of degree $d \geq 2$ defined over $\Q$,
\beq\label{DGC}
N_{\proj}(V,B) \leq C(V) B^{\dimP V}s(B), \qquad \text{for all $B \geq 1$,}
\eeq
in which $C(V)$ is a constant possibly depending on $V$, and $s(B)$ is a ``small factor,'' which could take the form  $s(B)\ll_\ep B^\ep$ (for any $\ep>0$); or $s(B) \leq (\log B)^{\ga(V)}$ for some $\ga(V) > 0$; or $s(B) \leq 1$.
\end{conj}
The main exponent is essentially the best   that could be expected. For suppose that $V$ contains a rational linear subspace $V'$ of codimension $1$ in $V$; then 
 \[N_\proj (V,B) \geq N_\proj(V',B) \gg B^{\dimP V'+1} = B^{\dimP V}.\]
 Serre provides an example for  which a logarithmic small factor necessarily arises in (\ref{DGC}) \cite[p. 178]{Ser97}, as follows:
 \begin{example}\label{example_quadric}
 Consider the quadric $X_1X_2=X_3X_4$ in $\Pbb^3$. Denoting the divisor function $d(\cdot)$,
\[N_\proj (V,B) \gg \sum_{ z \leq B^2} d(z) \gg B^2 \log B.\]
\end{example}

 A refined conjecture asks for uniformity:
\begin{conj}[Uniform Dimension Growth Conjecture, projective]\label{conj_UDGC}
The Dimension Growth Conjecture \ref{conj_DGC} holds with a choice of   $C(V)$ (and $\ga(V)$, if relevant) dependent on the degree and dimension of $V$ but not otherwise on $V$.
\end{conj}
Within the Uniform Dimension Growth Conjecture \ref{conj_UDGC}, it is also interesting to ask for explicit expressions for $C(V)$ (and $\ga(V)$, if relevant), in terms of the degree and dimension of $V$.  

Conjecture \ref{conj_DGC} was stated by Serre \cite[p. 178 after Thm. 4]{Ser97} and   Heath-Brown, with a statement   for (absolutely irreducible) hypersurfaces in  \cite[p. 227]{HB83} and both non-uniform and uniform versions for hypersurfaces in \cite[Conj. 1, Conj. 2]{HB02}; see also a uniform version for varieties of any codimension in \cite[Conj. 2]{BHBS06}.
By (\ref{projI_DGC_relation}), Conjectures \ref{conj_DGC} and \ref{conj_UDGC} would imply  $N_{\Pbb^{n-1}}(M,B)\leq C(V)B^{\dimP V}s(B)$ for any projective thin set $M \subset V(\Q)$ of type I  of degree $d\geq 2$.

We turn our attention to surveying  works that recently resolved the Dimension Growth Conjecture \ref{conj_DGC}, as well as the  Uniform Dimension Growth Conjecture \ref{conj_UDGC}, in all but one case.

To prove Conjectures \ref{conj_DGC} and \ref{conj_UDGC} we may   restrict attention to varieties defined as the zero locus of forms that are all defined over $\Q$, as remarked in \cite[p.546]{BHBS06}:
\begin{lem}\label{lemma_field_of_dfn}
  Let $V \subset \Pbb^N$ be a projective variety, or let $V \subset \Abb^N$ be an affine variety, defined over a finite extension $k/\Q$. If the ideal of $V$ is not invariant under
the action of the absolute Galois group $\Gal(\overline{\Q}/\Q)$, then there exists a codimension 1 subvariety $Y \subset V$ with $\deg Y \ll_{k,N,\deg V} 1$ such that $V(\Q) \subseteq Y(\Q)$.
\end{lem}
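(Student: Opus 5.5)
The approach is the standard Galois-conjugation trick. Since the ideal $I(V)$ is not invariant under $\Gal(\overline{\Q}/\Q)$, there is some $\sigma \in \Gal(\overline{\Q}/\Q)$ with $\sigma(I(V)) \neq I(V)$. Since $V$ is defined over $k$, we may replace $k$ by its Galois closure $K$ (of degree $\ll_k 1$) and take $\sigma \in \Gal(K/\Q)$. Write $V^\sigma$ for the conjugate variety, cut out by $\sigma$ applied to the coefficients of a set of defining equations of $V$. Every rational point $\bx \in V(\Q)$ satisfies $\sigma(G)(\bx) = \sigma(G(\bx)) = 0$ for each defining form $G$, so $V(\Q) \subseteq V^\sigma(\Q)$. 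Hence
\[ V(\Q) \subseteq \bigcap_{\tau \in \Gal(K/\Q)} V^\tau(\Q) = Y_0(\Q), \qquad Y_0 := \bigcap_{\tau} V^\tau. \]
Here $Y_0$ is a Galois-invariant closed subvariety of $V$.

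The key steps are as follows.

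First, we may assume $V$ is irreducible over $\overline{\Q}$, or else work with each irreducible component. The statement is intended for the irreducible case, as in \cite{BHBS06}, so we reduce to it. If $V$ is reducible, we apply the argument componentwise and take unions. The degree bounds are preserved, since the number of components and their degrees are $\leq \deg V$.

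Second, we show $V \cap V^\sigma \subsetneq V$. Since $V$ and $V^\sigma$ are irreducible of the same dimension and degree, $V \subseteq V^\sigma$ would force $V = V^\sigma$. By the Nullstellensatz this would give equality of the radical ideals, so $\sigma(I(V)) = I(V)$, a contradiction. Thus $Y_1 := V \cap V^\sigma$ is a proper closed subset of the irreducible variety $V$, hence of dimension at most $\dim V - 1$, and $V(\Q) \subseteq Y_1(\Q)$.

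Third, we pass to a genuine codimension-one subvariety of controlled degree. Choose a defining form $G^\sigma$ of $V^\sigma$ of degree $\leq \deg V$ that does not vanish identically on $V$. Such a form exists because $V$ is cut out set-theoretically by forms of degree $\leq \deg V$; this is a standard fact, so $V^\sigma$ is too. Set $Y := V \cap \{G^\sigma = 0\}$. By Krull's principal ideal theorem, every component of $Y$ has codimension exactly $1$ in $V$. By Bézout, $\deg Y \leq (\deg V)^2$. Moreover $V(\Q) \subseteq Y(\Q)$, since rational points of $V$ lie on $V^\sigma$. The dependence on $k$ and $N$ enters only through the choice of $K$ and through the degrees of the defining forms, so $\deg Y \ll_{k,N,\deg V} 1$. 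The affine case is identical, using polynomials in place of forms and the affine Bézout inequality.

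I expect the main obstacle to be the third step. There we must justify that $V^\sigma$ is cut out by equations of degree bounded in terms of $\deg V$ and $N$ alone. This is needed so that some such equation fails to vanish on $V$ and still produces a hypersurface section of bounded degree. I would first try to quote the classical result that an irreducible variety of degree $d$ is set-theoretically an intersection of hypersurfaces of degree $\leq d$, obtained via cones over generic projections. The remainder of the argument is routine.
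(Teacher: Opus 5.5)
Your argument is correct for irreducible $V$, which is the case the lemma is really about. It takes a slightly different route from the paper's.

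The paper picks $f\in I(V)$ with coefficients in $k=\Q(\alpha)$ and some $g$ with $g(f)\notin I(V)$. It expands $f=\sum_{i=0}^{d-1}f_i\alpha^i$ with $f_i\in\Q[\bX]$, and notes that every rational point of $V$ kills each $f_i$, by linear independence of $1,\alpha,\ldots,\alpha^{d-1}$ over $\Q$. Some $f_i\notin I(V)$, since otherwise $g(f)=\sum_i f_i\,g(\alpha^i)\in I(V)$, and the paper then cuts $V$ by $f_i=0$.

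You skip the coordinate expansion and cut $V$ by the conjugate equation $G^\sigma$ itself, using $V(\Q)\subseteq V\cap V^\sigma$. The $f_i$ and the Galois conjugates of $f$ span the same space over $\overline{\Q}$ (Vandermonde), so these are two presentations of one idea.

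\begin{itemize}
\item The paper's version produces a cutting polynomial with rational coefficients, so $Y$ is defined over $k$ with no Galois closure. Yours gives $Y$ only over $K$. That is harmless here, because the trivial bound of Lemma~\ref{lemma_projI_trivial_bound} does not depend on the field of definition.
\item Your write-up makes explicit two things the paper leaves tacit. First, $Y$ has codimension exactly one, by irreducibility of $V$ plus Krull. Second, the degree bound rests on $V$ being set-theoretically cut out by forms of degree at most $\deg V$. The paper's assertion $\deg f_i\ll_{k,N,\deg V}1$ silently needs the same fact in order to choose $f$ of bounded degree.
\item With that fact, your argument gives $\deg Y\le(\deg V)^2$, which is independent of $k$ and $N$. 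Your closing sentence about how $k$ and $N$ enter is therefore unnecessary.
\end{itemize}

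One correction to your first step. You cannot simply ``apply the argument componentwise'' when $V$ is reducible. Non-invariance of $I(V)$ need not be inherited by every component, and for reducible $V$ the statement is actually false. For example, $V=\{x_2(x_1-\sqrt{2}\,x_0)=0\}\subset\Pbb^2$ has a non-invariant ideal. But it contains the rational line $x_2=0$, whose rational points are Zariski dense in a top-dimensional component, so they cannot lie in a finite subset of $V$.

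The componentwise reduction is legitimate when $V$ is irreducible over $k$ but geometrically reducible. In that case $\Gal(\overline{\Q}/k)$ permutes the (at least two) geometric components transitively, so none of them has Galois-invariant ideal, and your argument applies to each. This matches how the paper uses the lemma: only on irreducible components to which it applies.
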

Consequently, by working within each irreducible component $V_j$ of $V$ to which the lemma applies, $V_j(\Q) \subseteq Y_j(\Q)$ for  some codimension 1  subvariety $Y_j \subset V_j$, with $\deg Y_j \ll_{k,N,\deg V_j} 1$. Then the trivial bound $N_{\proj}(V_j,B)\ll_{\deg Y_j, \dimP Y_j,N} B^{\dimP Y_j+1}  \ll B^{\dimP V_j}$ of Lemma \ref{lemma_projI_trivial_bound} suffices to obtain Conjectures \ref{conj_DGC} and \ref{conj_UDGC} in this case.  (Here we note that the trivial bound in Lemma \ref{lemma_projI_trivial_bound} applies regardless of the field of definition of the variety.)
\begin{proof}
  Let $I(V)$ denote the ideal of $V$. Since $I(V)$ is not invariant under the action of $\Gal(\overline{\Q}/\Q)$, there exists an extension $k/\Q$ and a polynomial $f\in I(V)\subset k[\bX]$ such that $g (f)\not\in I(V)$ for some $g\in \Gal(\overline{\Q}/\Q)$.  Let $k = \Q(\alpha)$ and $[k:\Q]=d$; then we can decompose $f$ in the following way: 
    $$f(\bX) = \sum_{i=0}^{d-1} f_i(\bX)\alpha^i,$$
    where $f_i(\bX)\in \Q[\bX]$ for each $i,$ and $\deg f_i \ll_{k,N,\deg V} 1$.   For $\bx \in V(\Q)$, we must have that $f_i(\bx) = 0$ for each $i.$ Thus, to show that $V(\Q)$ lies in a proper subvariety, it suffices to show that $f_i(\bX)\not\in I(V)$ for some $i$. This follows from the fact that 
    $$(g (f))(\bX) = \sum_{i=0}^{d-1} f_i(\bX) \cdot (g (\alpha^i));$$
    if $f_i(\bX) \in I(V)$ for each $i$, then $g (f)\in I(V)$, a contradiction. Thus $f_i(\bX)\not\in I(V)$ for some $i$ but must vanish on all of $V(\Q)$, as desired.

\end{proof}

To prove Conjectures \ref{conj_DGC} and \ref{conj_UDGC}, it also suffices to consider only  the case of  geometrically integral varieties. Indeed:
\begin{lem}\label{lemma_typeI_geometrically_integral}
Let $V \subset \Pbb^N$ be a projective variety, or let $V \subset \Abb^N$ be an affine variety, defined over $\Q$, of degree $d$. If $V$ is integral but not geometrically integral, then there exists a codimension 1 subvariety $Y \subset V$ defined over $\Q$ of degree at most $d^2$ such that $V(\Q) \subseteq Y(\Q)$. 
\end{lem}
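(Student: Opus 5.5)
The plan is to reduce to the Galois-conjugate argument of Lemma \ref{lemma_field_of_dfn}. Since $V$ is integral over $\Q$ but not geometrically integral, $V_{\overline{\Q}}$ splits into $r\geq 2$ irreducible components $V_1,\ldots,V_r$. All of them are defined over some finite Galois extension $k/\Q$, and $\Gal(k/\Q)$ permutes them transitively, because $V$ is irreducible over $\Q$. Consequently they share a common dimension $m=\dim V$ and a common degree $d/r$. The key observation is that a rational point $\bx \in V(\Q)$ is fixed by Galois. So if $\bx \in V_1$, then $\bx \in \sigma(V_1)$ for every $\sigma$, and hence $\bx$ lies in every component. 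This gives
\[ V(\Q) \subseteq V_1\cap V_2 \cap \cdots \cap V_r \subseteq V_1 \cap V_2.\]

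Next I build $Y$ as the intersection of $V$ with a single hypersurface defined over $\Q$. Pick a form (or, in the affine case, a polynomial) $f \in k[\bX]$ of degree at most $\deg V_1 \leq d$ that vanishes on $V_1$ but not identically on $V_2$. Such an $f$ exists because $V_1$ is cut out set-theoretically by forms of degree at most $\deg V_1$, and since $V_2\not\subseteq V_1$, at least one of these forms fails to vanish on $V_2$. Then take the norm-type product
\[ G = \prod_{\sigma \in \Gal(k/\Q)} \sigma(f). \]
Its coefficients are Galois-invariant, so up to a scalar it is defined over $\Q$. Each rational point lies in $V_1$, so $f$ vanishes there, and therefore so does $G$.

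The difficulty is the degree: $G$ has degree $[k:\Q]\deg f$, which is not bounded by $d^2$. So instead I would symmetrize only over the $r$ components. Choose $f_1$ vanishing on $V_1$ but not on $V_2$, and transport it to $f_i=\sigma_i(f_1)$, where $\sigma_i V_1=V_i$. Then set $G=\prod_{i=1}^r f_i$. This has degree at most $r\cdot(d/r)=d$, and vanishes on all of $V$. That is useless as it stands, so the correct construction is to take an elementary symmetric combination that is Galois-stable yet does not vanish on $V$. For instance, with a generic linear form $\ell$ in the chosen generators, consider the polynomial $\sum_i f_i$ or, alternatively, a product over pairs. My fallback is the cleaner geometric route: let $Y=\bigcup_{i\neq j}(V_i\cap V_j)$. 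It is Galois-stable and hence defined over $\Q$, it is a proper closed subset of $V$ of dimension at most $m-1$, and by the observation above it contains $V(\Q)$. Its degree is bounded by Bézout-type estimates by $\sum_{i<j}\deg V_i\deg V_j\leq \binom{r}{2}(d/r)^2\leq d^2$. If pure codimension $1$ is required, I would enlarge $Y$ to a Galois-stable hypersurface section of $V$ containing it, again of controlled degree.

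I expect the main obstacle to be the degree bound $d^2$. This requires a refined Bézout inequality for intersections of equidimensional varieties that need not be in general position (Fulton's version). It also requires care to keep $Y$ defined over $\Q$ while producing a genuine codimension-one subvariety rather than a lower-dimensional set, which I would handle by the enlargement step described above.
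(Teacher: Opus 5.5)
Your fallback route is correct and is essentially the paper's argument: decompose $V_{\overline{\Q}}$ into its components, observe that a rational point lying on one component also lies on each of its Galois conjugates, and bound the degree of the resulting intersections by (refined) B\'ezout. The paper intersects each $W_i$ with a single conjugate $\sigma_i(W_i)$, whereas your union of all pairwise intersections $V_i\cap V_j$ is visibly Galois-stable, hence defined over $\Q$, and has total degree at most $\binom{r}{2}(d/r)^2<d^2$. You can delete the abandoned norm and symmetric-function constructions, and like the paper you only get codimension at least $1$, which is all the application needs.
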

Consequently if $V \subsetneq \Pbb^N$ is integral but not geometrically integral then by Lemma \ref{lemma_projI_trivial_bound},  \[N_\proj(V,B) \ll_{\deg Y, \dimP Y,N} B^{\dimP Y+1} = B^{\dimP V},\] which suffices.    We state the lemma as in \cite[Prop. 2.1]{Ver24}, which cites the first paragraph of \cite[\S 4]{Wal15} for a proof of a similar result; see also \cite[Cor. 1]{HB02} and \cite[p. 1093]{Sal23} for analogous strategies. For completeness we record a proof.

\begin{proof}
    Assume $V$ is irreducible over $\mathbb{Q}$ but not over $\overline{\mathbb{Q}}$. Over $\overline{\mathbb{Q}}$ one can write
    \[
    V=\bigcup_{i=1}^\ell W_{i},
    \]
    where for every $i=1,\ldots,\ell$, the variety $W_{i}$ is irreducible over $\overline{\mathbb{Q}}$ and $W_{i}$ is not defined over $\Q$. 
    (Note that necessarily $\ell \geq 2$, since if $\ell=1$, then $W_{1}$ would be defined over the rationals, since $V$ is, yielding a contradiction.)

    Moreover notice that $\deg W_{i}\leq d$ for every $i=1,\ldots,\ell$. For each $i$ let $K_{i}/\mathbb{Q}$ be the minimal extension such that $W_{i}$ is defined over $K_{i}$ and let $\sigma_{i}\in \Gal (K_{i}/\Q)$ be such that $\sigma_{i}(W_{i})\neq W_{i}$. (Such a map exists since $W_{i}$ is not defined over $\Q$, and also $\deg \sigma_{i}(W_{i})\leq d$.) Then if $P\in\Q^{N}$ is a rational point  we have that
    \[
    P\in W_{i}(\Q)\Longleftrightarrow P\in \sigma_{i}(W_{i})(\Q).
    \]
    Hence,
    \[
    \begin{split}
    P\in V(\Q)&\Longrightarrow P\in W_{i}(\Q)\text{ for some }i\in\{1,\cdots,\ell\}\\&\Longrightarrow P\in (W_{i}\cap \sigma_i(W_{i}))(\Q)\text{ for some }i\in\{1,\cdots,\ell\}.
    \end{split}
    \]
    Thus
    \[
    V(\Q)\subset\bigcup_{i=1}^{\ell}(W_{i}\cap \sigma_i(W_{i}))(\Q).
    \]
    On the other hand, since $W_{i}$ is absolutely irreducible, it follows that $W_{i}\cap \sigma_i(W_{i})$ has codimension (at least) 1 in $W_i$. Finally, since both $\deg (W_{i}) \leq d$ and $\deg(\sigma_i(W_{i}))\leq d$, then $\deg (W_{i}\cap\sigma_i(W_{i}))\leq d^{2}$ by  B\'ezout's Theorem \cite[Cor. 2.4]{EisHar16}.
\end{proof}

 One of the most natural settings for studying the Dimension Growth Conjecture \ref{conj_DGC} is for $V \subset \mathbb{P}^{n-1}$   a nonsingular projective hypersurface of degree $d \geq 2$. This case was  resolved by combined works of Browning and Heath-Brown  for all $n \geq 3$ 
  \cite{HB94,HB02,Bro03a,BroHB06a,BroHB06b}.
In the general setting of a variety, one of the earliest results is due to Serre:  
\beq\label{Serre_projI}
N_\proj (V,B) \leq C(V) B^{\dimP V+1/2}(\log B)^{\ga(V)}
\eeq
for some $\ga(V)<1$. 
 (Serre deduced this in  \cite[p. 178 Thms. 3,4]{Ser97} from an upper bound for projective thin sets of type II, which is in turn deduced from studying affine thin sets of type II via (\ref{aff_proj_intro}); we defer further discussion to Remark \ref{remark_Serre_progress_DGC_via_projII}.)
To make further progress,
Browning, Heath-Brown, and Salberger showed that a bound for $N_{\proj}(V,B)$ can be obtained from a bound for a corresponding affine problem for hypersurfaces \cite[Thm. 1]{BHBS06}. That is, controlling projective thin sets of type I can be achieved by controlling certain affine thin sets of  type I, so we now turn our attention to that case.

 \subsection{Type I, affine}
Consider  $M \subset \Abb^{n}(\Z)$  a thin set of type I.   Let $V$ be a Zariski-closed subvariety $V \subsetneq \Abb^{n}$ such that $M \subset V(\Z)$. 
Since 
\[ N_{\Abb^{n}}(M,B) \leq \#\{ \bx \in V\cap \Abb^{n}(\Z):   \|\bx\| \leq B\},\]
improving on the trivial bound reduces to  a question about counting integral points of bounded height on an irreducible affine variety. Given any irreducible affine variety $V \subset \Abb^N$, define
\[ N_{\aff}(V,B) := \#\{ \bx \in V\cap \Abb^{N}(\Z):   \|\bx\| \leq B\}.\]
In the special case that $V$ is a hypersurface $H$ defined as the vanishing set of a polynomial $F(X_1,\ldots,X_N) \in \Z[X_1,\ldots,X_N]$, the trivial bound is also sometimes called the Schwartz-Zippel bound:
 \begin{lem}[Trivial bound, affine hypersurface] \label{lemma_Schwartz_Zippel}
 Let $A$ be a domain, such as $\Z$, or $\F_p$ for a prime $p$, or the ring of integers $\calO_k$ in a number field $k$. Let $F \in A[X_1,\ldots,X_n]$ be a nonzero   polynomial of degree $d \geq 1$, and $S \subset A$ a finite subset. Then 
\[ \#\{(x_1,\ldots,x_n) \in S^n : F(x_1,\ldots,x_n)=0\} \leq d |S|^{n-1}.\]
\end{lem}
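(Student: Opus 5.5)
The proof is by induction on the number of variables $n$. We expand $F$ as a polynomial in the last variable over the ring $A[X_1,\ldots,X_{n-1}]$, and we separate the points of $S^n$ according to whether the leading coefficient in $X_n$ vanishes there. Throughout we use only the basic fact that over a domain $A$, a nonzero one-variable polynomial of degree $e$ has at most $e$ roots in $A$. To prove this, embed $A$ in its fraction field, where the factor theorem applies.

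\emph{Base case $n=1$.} A nonzero $F\in A[X_1]$ of degree $d$ has at most $d$ roots in $A$. Hence
\[
\#\{x\in S: F(x)=0\}\leq d = d|S|^{0}.
\]

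\emph{Inductive step.} Suppose the statement holds in $n-1$ variables. Write
\[
F=\sum_{j=0}^{k} g_j(X_1,\ldots,X_{n-1})X_n^j
\]
with $g_k\neq 0$, so $0\leq k\leq d$ and $\deg g_k\leq d-k$. Consider $(x_1,\ldots,x_n)\in S^n$ with $F(x_1,\ldots,x_n)=0$, and split into two cases.

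\begin{enumerate}
\item[(a)] $g_k(x_1,\ldots,x_{n-1})=0$. If $k=d$, then $g_k$ is a nonzero constant and this case is empty. If $k<d$, then $g_k$ is nonzero of degree at most $d-k$. When $g_k$ is constant the case is again empty; otherwise the inductive hypothesis gives at most $(d-k)|S|^{n-2}$ choices of $(x_1,\ldots,x_{n-1})$. In either event there are at most $|S|$ choices of $x_n$, for a total of at most $(d-k)|S|^{n-1}$ points.
\item[(b)] $g_k(x_1,\ldots,x_{n-1})\neq 0$. Then $F(x_1,\ldots,x_{n-1},X_n)$ is a nonzero polynomial in $X_n$ of degree $k$, so it has at most $k$ roots. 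There are at most $|S|^{n-1}$ choices of the first $n-1$ coordinates, giving at most $k|S|^{n-1}$ points.
\end{enumerate}

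Adding the two cases gives at most $(d-k)|S|^{n-1}+k|S|^{n-1}=d|S|^{n-1}$ points. This completes the induction.

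The case $k=0$, where $F$ does not involve $X_n$ at all, is covered by case (a): all points of $S^n$ with $F=0$ satisfy $g_0(x_1,\ldots,x_{n-1})=0$, and they number at most $d|S|^{n-2}\cdot|S|$.

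The only point needing care is the degree bookkeeping $\deg g_k\leq d-k$, which is exactly what makes the two cases sum to $d$. No genuine obstacle is expected.
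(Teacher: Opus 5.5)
Your proof is correct and takes essentially the same route as the paper. The paper only records that the bound follows by induction on dimension, deferring to the cited references, and your induction on the number of variables is the standard form of that argument: you split according to whether the leading coefficient $g_k$ in $X_n$ vanishes and use $\deg g_k \leq d-k$, so the two cases contribute $(d-k)|S|^{n-1}$ and $k|S|^{n-1}$.
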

The proof is by induction on dimension, and   may be found in many places, such as \cite[Thm. 1]{HB02} or \cite[Lemma 10.1]{BCLP23}. (While the proof given in \cite[Thm. 1]{HB02} is stated in the setting where $F$ is absolutely irreducible, and the proof given
in \cite[Lemma 10.1]{BCLP23} is stated in the case where $F$ is homogeneous, either proof applies in the present setting via trivial modifications.)

In the general where $V$ is a variety, a  trivial bound  is: 
\begin{lem}[Trivial bound, affine variety]\label{lemma_affineI_trivial_bound}
For any variety $V \subsetneq \Abb^N$,
\[
N_{\aff}(V,B) \ll_{\dimA V,\deg V,N} B^{\dimA V}.
\]
If moreover $V$ is of pure dimension, the dependence can be made explicit as 
\[N_{\aff}(V,B) \ll_{\dimA V,N} (\deg V)B^{\dimA V}.\]
\end{lem}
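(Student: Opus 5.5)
We will prove Lemma \ref{lemma_affineI_trivial_bound} by reducing to the hypersurface case, Lemma \ref{lemma_Schwartz_Zippel}, via a generic linear projection, and inducting on dimension. First decompose $V$ into irreducible components over $\overline{\Q}$. The number of components is at most $\deg V$, and each component has degree at most $\deg V$ and dimension at most $\dimA V$. If we bound each component $W$ by $\ll_{\dimA W,N} (\deg W) B^{\dimA W}$ and sum, using $\sum_W \deg W = \deg V$ in the pure-dimensional case, we obtain both statements. So we may assume $V$ is irreducible over $\overline{\Q}$, of dimension $m<N$ and degree $D$.

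\textbf{Projection step.} For irreducible $V$ of dimension $m$, choose coordinates adapted to $V$. Among the coordinate projections $\Abb^N\to\Abb^{m+1}$ there are only $\binom{N}{m+1}$ choices, so no integral change of variables distorting heights is needed: we need one for which the image $\overline{\pi(V)}$ is a hypersurface in $\Abb^{m+1}$. A coordinate projection to $\Abb^m$ that is dominant on $V$ exists, since the coordinate functions generate $\overline{\Q}(V)$, which has transcendence degree $m$, so some $m$ of them are algebraically independent. Append one further coordinate $x_j$. Then $x_j$ is algebraic over the chosen $m$ coordinates, and its minimal relation gives a nonzero polynomial $G_j$ in $m+1$ variables, of degree $\le D$, vanishing on the image. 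This degree bound holds because the image closure has degree at most $D$, by the standard behavior of degree under linear projection (intersect with generic linear spaces).

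\textbf{Fibre count.} Apply Lemma \ref{lemma_Schwartz_Zippel} with $S=[-B,B]\cap\Z$. This shows the integral points of $V$ with $\|\bx\|\le B$ project to at most $D(2B+1)^m$ points of $\Abb^{m+1}$. The fibres of the full projection are a concern. Consider the map $V\to\Abb^{m}\times\Abb^1\times\cdots$ recording all the remaining coordinates. For each remaining coordinate $x_j$, $j$ outside the chosen $m$, we have a relation $G_j(x_{i_1},\dots,x_{i_m},x_j)=0$ with $G_j$ of degree $\le D$ and nonconstant in $x_j$. Away from the vanishing of its leading coefficient in $x_j$, each point of $\Abb^m$ has at most $D$ choices of $x_j$. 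So an integral point is determined, up to $D^{N-m}$ choices, by its image in $\Abb^m$, of which there are at most $(2B+1)^m$.

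\textbf{Main obstacle.} The exceptional locus is where some leading coefficient $c_j(x_{i_1},\dots,x_{i_m})$ vanishes. Its intersection with $V$ is a proper closed subset $V\cap\{c_j=0\}$ of dimension $\le m-1$. By Bézout its degree is $\ll_{N,m} D^{O(1)}$, and its number of components is similarly bounded. We handle it by induction on $m$, with base case $m=0$ where $V$ is at most $D$ points, and this gives $\ll B^{m-1}$, which is acceptable. Keeping the linear dependence on $D$ in the pure-dimensional statement is the delicate point, since the induction yields polynomial losses in $D$ only on the lower-order term $B^{m-1}$. These are absorbed into $B^m$ when $B\ge D^{O(1)}$. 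For $B< D^{O(1)}$ we instead use Lemma \ref{lemma_Schwartz_Zippel} iteratively on generic hyperplane sections to get $\ll D B^m$ directly. If this case split proves messy, the first statement with unspecified dependence on $\deg V$ is still secured by the argument above.
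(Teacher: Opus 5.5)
Your argument does prove the first statement, by a route different from the paper's. You use a dominant coordinate projection $V\to\Abb^m$ and count roots in each remaining coordinate off the locus where some leading coefficient $c_j$ vanishes. You then induct on $\dimA V$ for that locus, which is pure of dimension $m-1$ and of degree $\le D^2$, since $c_j$ is a nonzero polynomial in functions algebraically independent on $V$. This yields $N_{\aff}(V,B)\ll_{m,N} D^{O_N(1)}B^m$. Note that your application of Lemma \ref{lemma_Schwartz_Zippel} to the image hypersurface in $\Abb^{m+1}$ plays no role in this.

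The gap is in the second statement (linear dependence on $\deg V$), and it has two parts.

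\emph{The main term is not linear in $D$.} Your main term is $D^{N-m}(2B+1)^m$, not $D(2B+1)^m$, because you multiply independent root counts over the $N-m$ extra coordinates. So it is not true that your method loses powers of $D$ only in the $B^{m-1}$ term. This part can be repaired. Over a point $\mathbf a$ with all $c_j(\mathbf a)\neq 0$, the fibre $V\cap\pi^{-1}(\mathbf a)$ is a finite intersection of $V$ with an affine linear space of codimension $m$, hence has at most $\deg V$ points by B\'ezout.

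\emph{The small-$B$ regime is not handled.} Even after that repair, the exceptional locus contributes $D^{O(1)}B^{m-1}$, which is $\ll DB^m$ only when $B\geq D^{O(1)}$. For smaller $B$, your appeal to ``Schwartz--Zippel iterated on generic hyperplane sections'' is not an argument, since generic hyperplanes have no relation to the box $[-B,B]^N$.

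The missing idea, which is the whole of the paper's proof, is to slice by the \emph{special} hyperplanes $H(x_1)=\{X_1=x_1\}$ with $x_1\in\Z\cap[-B,B]$, and to induct on the ambient dimension $N$ rather than on $\dimA V$.
\begin{itemize}
\item These hyperplanes are pairwise disjoint, so at most one of them contains the irreducible $V$.
\item For every other $x_1$, $V\cap H(x_1)$ is empty or pure of dimension $m-1$ with total degree $\leq D$. It therefore contributes $\ll_{m,N} DB^{m-1}$, giving $\ll DB^m$ after summing over $x_1$.
\item The single exceptional slice is $V$ itself, regarded inside $H(x_1^*)\cong\Abb^{N-1}$. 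The inductive hypothesis covers it once you have reduced, via Lemma \ref{lemma_Schwartz_Zippel}, to the case $\dimA V\leq N-2$.
\end{itemize}
This gives the linear dependence for all $B$ at once, with no projection, fibre analysis, or case split in $B$. It also proves the first statement more cheaply than your route.
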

The exponent is best possible if $V$ is of degree 1.   In general if $V \subset \Abb^n$ is an affine variety, a nonuniform bound $N_{\aff}(V,B) \ll_{V} B^{\dimA V}$  can again be obtained by an argument that projects from $\Abb^N$ to $\Abb^{\dimA V}$, analogous to the remark following Lemma \ref{lemma_projI_trivial_bound}. For completeness, we now provide a proof of the uniform version recorded in the present lemma. This proof is similar to \cite[Thm. 1]{BroHB05}, and the result stated above can also be found for example in \cite[Lemma 4.1.1]{CCDN20}, \cite[Prop. 2.1]{BCSSV25x}. It is an interesting feature that the constant in the affine Schwartz-Zippel bound in Lemma \ref{lemma_Schwartz_Zippel} and Lemma \ref{lemma_affineI_trivial_bound} has linear dependence on $\deg V$. (If $V$ is a union of $d$ parallel linear varieties, this dependence on $d=\deg V$ cannot be improved.) In contrast, see Remark \ref{remark_Dehennin}.

\begin{proof}
For clarity, we suppose the given variety $V$ is defined over a field $k$. In this paper we only require the case $k=\Q$, or a number field $k$ in the application following Lemma \ref{lemma_field_of_dfn} via Lemma \ref{lemma_projI_trivial_bound}. Thus we assume here for simplicity that $k$ is a number field, although the proof can be adapted to more general settings, since it only relies on Lemma \ref{lemma_Schwartz_Zippel}. 

Decompose $V$ into irreducible components (over $k$) as $V = \cup_{j \in J} W_j$, in which  there are at most $\deg V$  components. Consequently, to prove the main claim of the lemma, it is sufficient to prove that for any irreducible variety $W \subset \Abb^{N}$ defined over $k$,
\beq\label{affine_SZ_IH}
N_{\aff}(W,B) \ll_{\dimA W,N}(\deg W) B^{\dimA W}.
\eeq 
Applying this to each irreducible component in $V$ leads to 
\[N_{\aff}(V,B) \ll_{\dimA V,N}   (\sum_{j \in J} \deg W_j)B^{\dimA V}.\]
Recall that the degree of $V$ is the sum of the degrees of the irreducible components of $V$ of highest dimension. We may always conclude that $\sum_{j \in J}  (\deg W_j) \leq (\deg V)^2$, and this suffices for the main claim of the lemma. In the case that $V$ is of pure dimension, so that  $\dimA W_j=\dimA V$ for each $j$, then $\sum_{j \in J}  (\deg W_j)=\deg V$, and this leads to the second conclusion of the lemma.

We  proceed to prove (\ref{affine_SZ_IH}) for irreducible $W$,   by induction on the dimension of the ambient space $\Abb^N$.    If $N=1$ and $W \subsetneq \Abb^1$ then  $\#\{ \bx \in W\cap \Abb^{1}(\Z) \}  \leq \deg W$, so that the conclusion of the lemma holds (with $\dimA W=0$). We assume the inductive hypothesis that 
(\ref{affine_SZ_IH}) holds  
for any irreducible variety $W\subsetneq \Abb^{N-1}$, and we prove it for any irreducible variety $V \subsetneq \Abb^{N}$. Consider such a $V \subsetneq \Abb^{N}$. If $\dimA V=N-1$ so that $V$ is a hypersurface, the result of the lemma already holds by the Schwartz-Zippel trivial bound in Lemma \ref{lemma_Schwartz_Zippel}. Thus from now on we may assume that $\dimA V \leq N-2$.

For each $x_{1}\in\mathbb{\Z}$ consider the hyperplane $H(x_1)$ cut out by $X_{1}-x_{1}=0$. Such hyperplanes are disjoint, i.e. $H(x_1)\cap H(x_1')=\emptyset $ if $x_{1}\neq x_{1}'$. Now if a hyperplane $H$ does not contain $V$, then $V\cap H$ is a proper closed subset of $V$ and since $V$ is irreducible, $\dimA (V\cap H)\leq \dimA V-1$. Therefore $\dimA (V\cap H(x_1))=\dimA V$ is possible only if $H(x_1)$ contains $V$ but this can occur for at most one value of $x_{1}$ since the hyperplanes $\{H(x_1)\}_{x_{1}}$ are pairwise disjoint. 

Suppose now $V$ lies in (at most) one hyperplane, say $H(x_1^*)$ for a fixed $x_1^* \in \Z$, and  for all $x_1 \in \Z \setminus \{x_1^*\}$,    $\dimA (V\cap H(x_1))\leq \dimA V -1$. Hence upon writing $\bx=(x_1,\bfy)$ for $\bfy=(x_2,\ldots,x_{N})$,  
\begin{align*}
N_\aff (V,B) & =\sum_{\substack{\|\bfx\|\leq B\\\bfx\in V}}1 
= \sum_{\substack{|x_{1}|\leq  B\\ x_1 \neq x_1^*}}\sum_{\substack{\|\bfy\|\leq B\\\bfy\in V\cap H(x_1)}}1 +  \sum_{\substack{\|\bfy\|\leq B\\\bfy\in V\cap H(x_1^*)}}1 \\
& \ll_{ \dimA V, N} (\deg V)B \cdot B^{\dimA V-1} + (\deg V) B^{\dimA V}.
\end{align*}
Here in the first term, in the sum over $\bfy$ we have used the fact that $H(x_1)\equiv \mathbb{A}^{N-1}$ and  $\dimA(V\cap H(x_1)) \leq \dimA V-1 \leq N-2$, so we may apply the inductive hypothesis to the sum over $\bfy$.  
In the second term, in the sum over $\bfy$ we again   used   $H(x_1^*)\equiv \mathbb{A}^{N-1}$, as well as the fact that we have already reduced to the case $\dimA V \leq N-2$. Thus even though $\dimA (V \cap H(x_1^*)) = \dimA V$, 
this sum over $\bfy$ is still subject to the inductive hypothesis, since 
  $\dimA(V\cap H(x_1^*))\leq \dimA V  \leq N-2 <N-1$.
Thus the proof  is complete.
  \end{proof}   

 \begin{rem}
 
 Another method to achieve a bound with the exponent in  Lemma \ref{lemma_affineI_trivial_bound}   (and correspondingly in Lemma \ref{lemma_projI_trivial_bound}) is to apply the Lang-Weil bound (\emph{a priori} stronger than the Schwartz-Zippel upper bound, since it implies a lower bound).  For example, in the affine setting, consider    a variety $V\subset \Abb^N$ defined over $\Q$. Suppose for a given $B$ there is a prime $p \in [B/2,B]$ such that $V$ is integral over $\overline{\F}_p$ (and of the same dimension). Then by the Lang-Weil bound (in the affine formulation of Lemma \ref{lemma_Lang_Weil_cor} below), $\#V(\F_p)=p^{\dimA V} + O_{\dimA V,\deg V}(p^{\dimA V-1/2})$. Since at most $B/p+O(1)=O(1)$ points $\bx \in [-B,B]^n \cap \Z^n$ project down to each point in $\F_p^n$,   we may conclude that $N_\aff(V,B) \ll_{\dimA V,\deg V} p^{\dimA V}  \ll B^{\dimA V}.$ As stated, this bound is independent of the coefficients of the defining polynomials of $V$, but it assumed the existence of a prime $p$ of good reduction of size comparable to $B$. Suppose for simplicity that $V$ is a hypersurface defined as the vanishing set of a  polynomial $F$ with integral coefficients, and suppose that $F$ is irreducible over $\overline{\Q}$. Then by a Bertini-Noether lemma (such as \cite[Lemma 2.9]{BPW25x}), $F$ is irreducible over $\overline{\F}_p$ for all but $\ll_{N,\deg F} \log \|F\|/\log \log \|F\|$ primes. In this case, the argument above would only apply as long as $B\gg \log \|F\|$, so that $\pi(B)\gg \log \|F\|/\log \log \|F\|$, which would imply the existence of a suitable prime $p \in [B/2,B]$. 
    
\end{rem}

As a result of Lemma \ref{lemma_affineI_trivial_bound}, for a thin set $M \subset V(\Z) \subsetneq \Abb^n(\Z)$, the trivial bound is
\beq\label{affineI_trivial_thin_set}
N_{\Abb^n}(M,B) \ll_{\dimA V,\deg V,n} B^{\dimA V} \ll B^{n-1}.
\eeq 
Pila  has obtained a celebrated uniform bound: for any irreducible affine variety $V \subset \Abb^N$ defined over $\R$ of degree $d$,
\beq\label{Pila_affineI}
N_{\aff}(V,B) \ll_{\dimA V,d,N,\ep} B^{\dimA V-1+1/d+\ep},
\eeq
for any $\ep>0$. See \cite[Thm. A]{Pil95}; this builds  on the Bombieri-Pila argument for plane curves in \cite{BomPil89}. In fact, Pila proves a  stronger result, with $B^\ep$ replaced by $\exp(12\sqrt{d\log B \log \log B})$.  Explicit refinements are now known. Cluckers, D\`ebes, Hendel, Nguyen and Vermeulen \cite[Prop. 2.6]{CDHNV23x} quantified the dependence on $d$   in (\ref{Pila_affineI}), proving 
\[ N_{\aff}(V,B) \ll_N d^{e(N,\dimA V)} B^{\dimA V-1+1/d}\log B\]
 for some integer $e(N,\dimA V)$. In the case when $V$ is an (irreducible affine) hypersurface $H$, Binyamini, Cluckers and Kato \cite[Thm. 3]{BCK25} showed at most quadratic dependence on degree (at the cost of more log factors), proving 
\[ N_{\aff}(H,B) \ll_N d^2 B^{\dimA H-1+1/d}(\log B)^{\ga(N)}\]
for some $\ga(N) \geq 0$.
By \cite[Prop. 4.3.1]{CCDN20}   a result on $N_\aff(H,B)$ for hypersurfaces implies a result for $N_{\aff}(V,B)$ for varieties, but with an increased power of $d$. Precisely, for each geometrically integral $V\subset\Abb^{N}$ of dimension $m$ and degree $d$ there is a geometrically integral hypersurface $H\subset \Abb^{m+1}$ (hence of dimension $m$) of degree $d$ and birational to $V$ such that
\[
N_\aff(V,B) \leq d N_\aff(H,c(n)d^{e(n)} B),
\] 
for some $c(n),e(n)$.

The exponent in (\ref{Pila_affineI}) (and the aforementioned explicit refinements) improves on Lemma \ref{lemma_affineI_trivial_bound} as long as $d\geq 2$, and indeed the exponent is essentially sharp for each $d$, as the following example shows.
\begin{example}\label{example_affineI_lower_bound}
 Let $V\subset \Abb^n$ be the hypersurface defined by $F=0$ for $F(X_1,\ldots,X_n) = X_1^d - (X_2 + \cdots + X_n)$.  
 Then $\dimA V=n-1$ and 
$N_\aff(V,B)  \gg_{n,d} B^{\dimA V-1 +1/d}$, since 
\[
 N_\aff(V,B) = \sum_{\substack{|z|\leq (n-1)B \\ \text{$z$ a $d$-th power}}} \sum_{\substack{x_2+\cdots+x_n=z \\ (x_2,\ldots,x_n) \in [-B,B]^{n-1}}} 1 \gg_{n,d} B^{1/d} \cdot B^{n-2} .
\]
\end{example}

As a consequence of this example, one could say that  (\ref{Pila_affineI})  concludes the   study of affine thin sets of type I.
However, before we conclude our investigation prematurely, let us recall that we turned to affine thin sets of type I with the promise of understanding projective thin sets of type I. We record   the bridge between these settings  as in \cite[Thm. 1]{BHBS06}, which is proved by a birational projection argument (related to the fact that any variety $X$ of dimension $r$ is birational to a hypersurface in $\Pbb^{r+1}$, see \cite[Ch. I Prop. 4.9]{Har77}).  
\begin{prop}\label{prop_BHBS06_affineI_projI}
Consider a hypersurface $H \subset \Abb^n$ with $n \geq 3$  defined by the vanishing of a homogeneous form $F(X_1,\ldots,X_n) \in \Z[X_1,\ldots,X_n]$ of degree $d \geq 2$ that is absolutely irreducible. Let $\ep>0$. Suppose that for all such affine hypersurfaces $H$, 
\beq\label{affineI_hypersurface_upper}
N_\aff(H,B)\ll_{n,d,\ep} B^{\theta_{d,n}+\ep}.
\eeq
Then for any geometrically integral projective variety $V \subset \Pbb^N$ of degree $d \geq 2$ and $\dimP V=n-2$,
\[ N_\proj(V,B) \ll_{N,d,\ep} B^{\theta_{d,n}+\ep}.\]
\end{prop}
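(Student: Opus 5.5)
The plan is a birational projection argument, reducing the projective count on $V$ to the affine count on a cone over a hypersurface in $\Pbb^{n-1}$. This follows \cite[Thm. 1]{BHBS06}.

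\textbf{Step 1: reductions.} By Lemmas \ref{lemma_field_of_dfn} and \ref{lemma_typeI_geometrically_integral}, we may assume $V$ is defined over $\Q$ and geometrically integral. Set $m=\dimP V=n-2$.

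\textbf{Step 2: choose a projection.} We choose a linear projection $\phi:\Pbb^N \ratlmaps \Pbb^{m+1}=\Pbb^{n-1}$, given by an $(m+2)\times(N+1)$ integer matrix $A$, with three properties:
\begin{itemize}
\item[(a)] its centre $L$ of codimension $m+2$ does not meet $V$;
\item[(b)] $\phi|_V$ is birational onto its image $H_0=\{F=0\}\subset\Pbb^{n-1}$, where $F$ is an absolutely irreducible form of degree exactly $d$;
\item[(c)] the entries of $A$ are bounded by $c(N,d)$.
\end{itemize}
Property (a) holds for a generic centre since $\dim L=N-m-2$. Property (b) holds generically by the standard fact that a variety of dimension $r$ is birational to a hypersurface in $\Pbb^{r+1}$ \cite[Ch. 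I Prop. 4.9]{Har77}. Because $\phi|_V$ is finite and birational with $L\cap V=\emptyset$, the image has degree exactly $d$.

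To get property (c), I would argue as follows. The set of matrices for which (a) or (b) fails lies in a proper Zariski-closed subset of the space of matrices, cut out by polynomials of degree $O_{N,d}(1)$. These polynomials come from Chow-form or Hilbert-scheme considerations, which depend only on $N$ and $d$, but also on $V$ itself. Lemma \ref{lemma_Schwartz_Zippel} then shows that a box of side $c(N,d)$ contains an integer matrix avoiding this bad set.

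\textbf{Main obstacle.} The hardest step is making the degree bound on the bad locus independent of $V$. My first attempt would be to let $V$ vary in the Chow variety of degree-$d$, dimension-$m$ cycles in $\Pbb^N$. This family has bounded complexity depending only on $N$ and $d$, so the bad locus should be cut out uniformly.

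\textbf{Step 3: compare the counts.} Take a primitive $\bx\in\Z^{N+1}$ representing a point of $V(\Q)$ with $\|\bx\|\le B$. Then $\by=A\bx\in\Z^{n}$ is nonzero, by (a), and satisfies $\|\by\|\le c'(N,d)B$ and $F(\by)=0$. Each affine point $\by$ on the cone $\{F=0\}\subset\Abb^n$ arises from $O_d(1)$ projective points $\bx$ away from an exceptional set:
\begin{itemize}
\item the map $\bx\mapsto[\by]$ has fibres of size at most $\deg\phi|_V=1$ off a proper closed subset $E\subset V$;
\item distinct primitive $\bx$ with $[\by]$ equal give integer multiples of a single primitive vector, so at most two of them map to the same $\by$.
\end{itemize}
This gives
\[N_\proj(V,B)\le 2N_\aff(H,c'B)+N_\proj(E,B),\]
where $H\subset\Abb^n$ is the affine cone $F=0$. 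The hypothesis (\ref{affineI_hypersurface_upper}) bounds the first term by $B^{\theta_{d,n}+\ep}$.

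\textbf{Step 4: the exceptional set.} The set $E$ has dimension at most $m-1$. Its degree is $O_{N,d}(1)$, obtained by taking $E$ to be the non-isomorphism locus, which is cut out using the same uniform data as in Step 2. Lemma \ref{lemma_projI_trivial_bound} then gives $N_\proj(E,B)\ll B^{m}=B^{n-2}$.

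It remains to check that $n-2\le\theta_{d,n}$. This holds because the hypothesis must already hold for cones over hypersurfaces containing a rational linear subspace of codimension $1$. Such cones contain $\gg B^{n-2}$ points, so any valid exponent satisfies $\theta_{d,n}\ge n-2$. Combining Steps 3 and 4 gives $N_\proj(V,B)\ll_{N,d,\ep}B^{\theta_{d,n}+\ep}$.
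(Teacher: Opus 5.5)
Your proposal is correct and is essentially the birational projection argument of \cite[Thm.~1]{BHBS06} that the paper cites for this proposition: you choose a projection with a bounded integer matrix whose centre misses $V$ and which is birational onto a degree-$d$ hypersurface, and then apply the hypothesis to the affine cone. One simplification: since $L\cap V=\emptyset$, each fibre of $\phi|_V$ is $V\cap\langle L,\mathbf{y}\rangle$, a finite intersection of $V$ with a linear space, so it has at most $d$ points by B\'ezout; this gives $N_\proj(V,B)\le d\,N_\aff(H,c'B)$ directly (compare the factor $d$ in (\ref{Nproj_pass_from_hypersurface_to_V})), and removes the need for the exceptional set $E$, whose uniform degree bound you only assert, as well as the check $\theta_{d,n}\ge n-2$.
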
 
Consequently, to make progress on the Dimension Growth Conjectures \ref{conj_DGC} and \ref{conj_UDGC}, it suffices to sharpen Pila's bound (\ref{Pila_affineI}) only in certain cases, which exclude in particular the behavior exhibited by Example \ref{example_affineI_lower_bound}. Here is the relevant   conjecture, modeled on \cite[Conj. 1 and Conj. 3]{BHBS06}. Throughout, the leading form of a polynomial is the homogeneous part of highest degree.
\begin{conj}[Uniform Dimension Growth, affine hypersurface]\label{conj_affine_hypersurface}
 For any hypersurface $H \subset \Abb^n$ defined as the vanishing set of a  polynomial $F \in \Z[X_1,\ldots,X_n]$ of degree $d \geq 2$ with absolutely irreducible leading form, 
   \[ N_\aff(H,B) \ll_{n,d}  B^{\dimA H-1}s(B),\]
   for a ``small factor'' $s(B)$, which could take the form   $s(B)\ll_\ep B^\ep$ (for any $\ep>0$); or $s(B) \leq (\log B)^{\ga(n,d)}$ for some $\ga(n,d) > 0$; or $s(B) \leq 1$.
\end{conj}

The main exponent is best possible:
\begin{example}
    For every $n \geq 3$ and $d \geq 2$ there exists a hypersurface $H$ with $N_{\aff}(H,B) \gg_n B^{n-2}$. For example, consider $H \subset \Abb^n$ defined by the vanishing of $X_1F_1(\bX) - X_2 F_2(\bX)$ for forms $F_1,F_2 \in \Z[X_1,\ldots,X_n]$ of degree $d-1$; then the form vanishes for all $(0,0,x_3,\ldots,x_n)$ with $|x_j| \leq B$ for $j=3,\ldots,n$; see \cite[Conj. 1]{BHBS06}.
    \end{example}

In order to apply Proposition \ref{prop_BHBS06_affineI_projI}, we need only consider the case of Conjecture \ref{conj_affine_hypersurface} when $F$ is homogeneous, and we currently restrict our attention to this case, returning to the general setting in \S \ref{sec_revisit_affineI}.
When $F$ is homogeneous, Conjecture \ref{conj_affine_hypersurface} was established by Heath-Brown for  $n \leq 4$  \cite[Thms. 3, 9]{HB02} and in the cases $d=2$,   $n \geq 3$ \cite[Thm. 2]{HB02}, with $s(B) \ll_\ep B^\ep$ for any $\ep>0$. With this same notion of small factor,
Browning, Heath-Brown, and Salberger proved the conjecture in the homogeneous case for $n \geq 3$ and $d \geq 6$ \cite[Cor. 2]{BHBS06} (and certain weaker results for $3 \leq d \leq 5$). 
Then Salberger \cite[Thm. 0.4]{Sal23} proved Conjecture \ref{conj_affine_hypersurface} for all $d \geq 4$. Cumulatively:
\begin{thm}[Browning, Heath-Brown, Salberger]\label{thm_affine_hypersurface}
    The Uniform Affine Dimension Growth  
  Conjecture \ref{conj_affine_hypersurface} holds for all homogeneous $F$ with  degree $d \neq 3$, with small factor $s(B) \ll_\ep B^\ep$ for all $\ep>0$.
\end{thm}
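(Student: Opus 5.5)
The plan is to assemble Theorem \ref{thm_affine_hypersurface} from the cited results rather than to prove anything new. The main work is to check that the hypotheses of those results match the hypotheses of Conjecture \ref{conj_affine_hypersurface} in the homogeneous case, and that together they cover every pair $(n,d)$ with $d \neq 3$.

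\textbf{Step 1 (reduce the hypothesis).} If $F$ is homogeneous of degree $d$, its leading form is $F$ itself. So the hypothesis of Conjecture \ref{conj_affine_hypersurface} says exactly that $F$ is an absolutely irreducible form of degree $d\ge 2$, and the target bound is
\[ N_\aff(H,B)\ll_{n,d,\ep} B^{n-2+\ep}.\]

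\textbf{Step 2 (small $n$).} For $n\le 2$ the statement is vacuous. For $n=1$ there is no form of degree $d\geq 2$ that is irreducible. For $n=2$, a binary form of degree $d\ge 2$ factors into linear forms over $\overline{\Q}$, so it is not absolutely irreducible. Hence we may assume $n\ge 3$.

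\textbf{Step 3 (case split by degree, for $n\ge 3$).}
\begin{itemize}
\item If $d=2$, apply \cite[Thm. 2]{HB02}, which gives $B^{n-2+\ep}$ for every $n\ge 3$.
\item If $d\ge 4$, apply Salberger's \cite[Thm. 0.4]{Sal23}, which proves the uniform affine bound $N_\aff(H,B)\ll_{n,d,\ep}B^{n-2+\ep}$ for absolutely irreducible forms of degree $d \geq 4$.
\item For $d\ge 6$ one may alternatively invoke \cite[Cor. 2]{BHBS06}, and for $n\le 4$ (any $d$) one may invoke \cite[Thms. 3, 9]{HB02}. These are consistent with, but subsumed by, the two cases above.
\end{itemize}
Every degree $d\ne 3$ falls into one of the first two cases, which proves the theorem.

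\textbf{Main obstacle.} The step needing care is bookkeeping on the cited statements. First, the implied constants must depend only on $(n,d,\ep)$ and not on the coefficients of $F$. Second, each cited theorem must be stated for the affine cone count $N_\aff(H,B)$, not the projective count with a $\gcd=1$ condition. If a source states only the projective version, I would pass between the two by summing over the common factor $\lambda=\gcd(\bx)$. Here
\[ N_\aff(H,B)\le 1+\sum_{\lambda\le B} N_\proj(V,B/\lambda),\]
where $V$ is the projective hypersurface $F=0$ in $\Pbb^{n-1}$, which has $\dimP V=n-2$. A projective bound of the form $(B/\lambda)^{n-2+\ep}$ then sums to $O(B^{n-2+\ep})$ when $n\ge 4$, and to $O(B^{1+\ep}\log B)$ when $n=3$; the extra logarithm is absorbed into $B^{\ep}$.

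The degree $d=3$ is excluded precisely because none of these sources covers cubic forms in all $n\ge 5$. No attempt is made to address that case.
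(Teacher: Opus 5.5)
Your proposal is correct and follows the paper, which presents this theorem as the cumulative consequence of the cited results: \cite[Thm.~2]{HB02} for $d=2$, \cite[Thms.~3, 9]{HB02} for $n\le 4$, \cite[Cor.~2]{BHBS06} for $d\ge 6$, and \cite[Thm.~0.4]{Sal23} for $d\ge 4$, which is exactly your case split. Your extra bookkeeping (vacuity for $n\le 2$, and passing from primitive to arbitrary integer points via $\lambda=\gcd(\bx)$) is sound; in fact for $n=3$ the sum $\sum_{\lambda}\lambda^{-1-\ep}$ converges, so no $\log B$ factor arises there.
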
 
 It remains an open problem to verify the uniform statement in
  Conjecture \ref{conj_affine_hypersurface} for $F$ homogeneous of degree $d=3$, for example with small factor $s(B) \ll_\ep B^\ep$. For degree $d=3$, define 
  \beq\label{del3_dfn}
  \del_3 := 2/\sqrt{3}-1, \qquad \del_3':= 1/7< \del_3.
 \eeq
    Salberger  proved  that under the hypotheses of Conjecture \ref{conj_affine_hypersurface} for $d=3$, there is a uniform bound
  \beq\label{Salberger_affine_3}
  N_{\aff}(H,B) \ll_{n,\ep}B^{\dimA H-1+\del+\ep} ,
  \eeq
  with $\del=\del_3$ obtained in \cite[Thm. 0.4]{Sal23}, and the smaller value $\del = \del_3'$ obtained in \cite[Thm. 16.3]{Sal15}; see also \cite[Thm. 16.4]{Sal15} for  geometrically integral projective subvarieties. (For clarity, note that \cite{Sal15} was developed later than, but published before, \cite{Sal23}.)

 \subsection{Type I, revisiting the projective setting}
We now return to the setting of projective sets of type I. By Theorem \ref{thm_affine_hypersurface} combined with Proposition \ref{prop_BHBS06_affineI_projI}:
\begin{thm}[Browning, Heath-Brown, Salberger]\label{thm_UDGC_degneq3}
    The  Uniform Dimension Growth Conjecture \ref{conj_UDGC} holds for all cases with degree $d \neq 3$, with $C(V)$ depending only on the dimension and degree of $V$ and with small factor $s(B) \ll_{\ep}B^\ep$ for any $\ep>0$. 
\end{thm}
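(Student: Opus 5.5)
The plan is to combine Theorem \ref{thm_affine_hypersurface} with Proposition \ref{prop_BHBS06_affineI_projI}, after reducing to varieties that satisfy the hypotheses of the proposition. Let $V \subset \Pbb^N$ be an irreducible projective variety of degree $d \neq 3$, $d \geq 2$, defined over $\Q$, with $\dimP V = m$. We aim to show
\[ N_\proj(V,B) \ll_{N,d,\ep} B^{m+\ep}. \]

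First we reduce to the geometrically integral case. If $V$ is not geometrically integral, Lemma \ref{lemma_typeI_geometrically_integral} gives a codimension $1$ subvariety $Y \subset V$ of degree at most $d^2$ with $V(\Q) \subseteq Y(\Q)$. Lemma \ref{lemma_projI_trivial_bound} then gives $N_\proj(V,B) \ll_{N,d} B^{\dimP Y + 1} = B^{m}$, which is even better than needed. Similarly, Lemma \ref{lemma_field_of_dfn} disposes of components that are not Galois-invariant. So we may assume $V$ is geometrically integral of degree $d \geq 2$, $d\neq 3$.

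Next we set $n = m+2$. Since $d \geq 2$, the variety $V$ is not a linear space, so $m \geq 1$ and hence $n \geq 3$, as Proposition \ref{prop_BHBS06_affineI_projI} requires. The key input is then Theorem \ref{thm_affine_hypersurface}. It applies to the affine cone $H \subset \Abb^n$ of any absolutely irreducible form $F \in \Z[X_1,\ldots,X_n]$ of degree $d\neq 3$. Such an $F$ is its own leading form, so the leading form is absolutely irreducible and the hypothesis of Conjecture \ref{conj_affine_hypersurface} holds. The theorem gives
\[ N_\aff(H,B) \ll_{n,d,\ep} B^{\dimA H - 1 + \ep} = B^{n-2+\ep}. \]
Thus hypothesis (\ref{affineI_hypersurface_upper}) holds with $\theta_{d,n} = n-2$, uniformly over all such $F$ of degree $d$. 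Proposition \ref{prop_BHBS06_affineI_projI} then yields $N_\proj(V,B) \ll_{N,d,\ep} B^{n-2+\ep} = B^{\dimP V+\ep}$. This is (\ref{DGC}) with $s(B)\ll_\ep B^\ep$ and a constant depending only on $N$, $d$, $\dimP V$.

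The main point needing care is that the degree is matched. The hypersurfaces fed into the proposition must have the same degree $d$ as $V$, so that the exclusion $d \neq 3$ transfers. This holds because the birational projection in Proposition \ref{prop_BHBS06_affineI_projI} is to a hypersurface in $\Pbb^{m+1}$ of the same degree. The other point is that the dependence on $N$ enters only through the proposition, so the constant is uniform as claimed. Everything else is bookkeeping of the reductions above.
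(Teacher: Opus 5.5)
Your proposal is correct and is essentially the paper's own argument: after reducing to geometrically integral $V$ (Lemma \ref{lemma_typeI_geometrically_integral} plus the trivial bound), the paper feeds Theorem \ref{thm_affine_hypersurface} into Proposition \ref{prop_BHBS06_affineI_projI}, i.e.\ it uses (\ref{affineI_hypersurface_upper}) with $\theta_{d,n}=n-2$. Two small remarks: the appeal to Lemma \ref{lemma_field_of_dfn} is superfluous because $V$ is already defined over $\Q$, and the resulting constant also depends on $N$, exactly as it does in the paper's Proposition.
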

As a result of (\ref{Salberger_affine_3}), Salberger  has obtained a uniform but weaker statement toward Conjecture \ref{conj_UDGC} that for any geometrically integral variety $V \subset \Pbb^N$ of degree 3,
\[ N_\proj(V,B) \ll_\ep B^{\dimP V+\del+\ep},\]
for $\del=\del_3$ \cite[Thm. 0.3]{Sal23} or the smaller $\del = \del_3'$ \cite[Thm. 16.3, Cor 16.4]{Sal15}.
It remains an open problem to improve the exponent to achieve Conjecture \ref{conj_UDGC} for degree 3.

Salberger has also established the non-uniform  Dimension Growth Conjecture for $\deg d=3$ in \cite[Thm. 0.1 and \S 8]{Sal23} by ``ad hoc'' methods (not passing through Proposition \ref{prop_BHBS06_affineI_projI}):
 \begin{thm}[Salberger]\label{thm_DGC_deg3}
      The   Dimension Growth Conjecture \ref{conj_DGC} holds for all cases,   with   small factor $s(B) \ll_{\ep}B^\ep$ for any $\ep>0$. 
 \end{thm}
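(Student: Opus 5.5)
This statement is a survey-level result of Salberger, and I would follow his "ad hoc" route for degree $3$, since the uniform route through Proposition \ref{prop_BHBS06_affineI_projI} is not yet available there. For every degree $d\neq 3$, Theorem \ref{thm_UDGC_degneq3} already gives the uniform, and hence the non-uniform, version of Conjecture \ref{conj_DGC} with $s(B)\ll_\ep B^\ep$. So it suffices to treat an irreducible projective $V\subset\Pbb^N$ of degree $3$ over $\Q$. By Lemma \ref{lemma_field_of_dfn} and Lemma \ref{lemma_typeI_geometrically_integral}, combined with the trivial bound of Lemma \ref{lemma_projI_trivial_bound}, we may assume $V$ is geometrically integral, since otherwise $V(\Q)$ lies in a codimension one subvariety of bounded degree and the trivial bound already gives $B^{\dimP V}$. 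The constant is allowed to depend on $V$, and this is what makes the remaining case tractable.

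The next step is to classify geometrically integral cubic varieties. If $V$ is a cone, or is not linearly normal, I would project away from a vertex or a linear subspace and reduce to lower dimension or to the hypersurface case. I would induct on $\dimP V$: curves follow from Bombieri--Pila, and surfaces from the Heath-Brown and Salberger bounds for cubic surfaces, which give $B^{2+\ep}$ non-uniformly. Varieties of degree $3$ that are not hypersurfaces are either cones or of minimal degree (rational normal scrolls, or cones over the twisted cubic). Points on these can be counted directly through their rational parametrisations, giving $B^{\dimP V}$ up to logarithms. This leaves cubic hypersurfaces $V\subset\Pbb^{n-1}$ with $n\geq 5$.

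For the hypersurfaces I would slice by hyperplanes. Following the determinant method of Heath-Brown and Salberger, the rational points of height at most $B$ are covered by $O(B^{\ep}\cdot B^{\text{small}})$ hyperplane sections of bounded degree, or I would instead fibre $V$ by the hyperplanes $\{\mathbf{c}\cdot\bx=0\}$ with $\|\mathbf{c}\|$ small. I would then apply the inductive bound $B^{\dimP V-1+\ep}$ to each section and add up. The generic section is again a geometrically integral cubic, and the exceptional sections (those that are reducible, cones, or contain linear spaces of codimension one) form a proper closed subset of the dual space. Dependence on $V$ is harmless here, so I would bound the exceptional sections separately using a count of their parameters.

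The main obstacle is closing the induction without losing the factor $B^{\del_3}$ that appears in the uniform bound (\ref{Salberger_affine_3}). Concretely, one must control the hyperplane sections that contain many points, namely those containing a plane of dimension $\dimP V-1$. For these I would use the non-uniformity directly. A fixed cubic hypersurface contains only a bounded-dimensional family of linear subspaces of dimension $\dimP V-1$, and when $n\geq 5$ it contains at most finitely many of them. Each such subspace contributes $B^{\dimP V}$, which is acceptable, and the remaining sections satisfy the improved bound.
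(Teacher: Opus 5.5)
Your preliminary reductions are sound and use the tools the paper sets up. Theorem \ref{thm_UDGC_degneq3} handles $d\neq 3$. Lemmas \ref{lemma_field_of_dfn} and \ref{lemma_typeI_geometrically_integral} with the trivial bound reduce to geometrically integral $V$. A nondegenerate cubic has codimension at most $2$ in its rational linear span, so only cones, minimal-degree varieties and hypersurfaces remain. For the remaining cubic case the paper gives no argument; it cites Salberger \cite[Thm.~0.1, \S 8]{Sal23}.

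Your argument for cubic hypersurfaces of dimension $\geq 3$, which is the whole content of the theorem, has a genuine gap.

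\textbf{Non-uniform induction over a varying family.} Your inductive hypothesis is the \emph{non-uniform} bound $N_\proj(W,B)\le C(W)B^{\dimP W+\ep}$. You apply it to the sections $V\cap H_{\mathbf c}$, a family whose size grows with $B$ and whose defining coefficients grow with $\|\mathbf c\|$. Nothing controls $C(V\cap H_{\mathbf c})$, so the sum cannot be estimated. Your remark that dependence on $V$ is harmless applies to $V$ alone, not to its sections. What you would actually need is a bound uniform over all hyperplane sections, essentially a uniform degree-$3$ result one dimension lower, and that is exactly what is unavailable.

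\textbf{The summation does not close.} Even granting uniformity, covering the primitive points of height at most $B$ in $\Pbb^{n-1}$ needs hyperplanes with $\|\mathbf c\|$ up to about $B^{1/(n-1)}$, hence about $B^{n/(n-1)}$ of them. Summing $B^{\dimP V-1+\ep}$ over these gives $B^{\dimP V+1/(n-1)+\ep}$. Recovering that loss needs bounds for the sections in lopsided lattice boxes, which you do not supply. The lossless slicing in the literature is the affine one behind Proposition \ref{prop_BHBS06_affineI_projI}. It leads directly to needing uniform bounds for affine cubic surfaces, where only (\ref{Salberger_affine_3}) is known.

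\textbf{The determinant method does not give hyperplane sections.} It produces auxiliary hypersurfaces: many of bounded degree, or a single one whose degree is a power of $B$. Their intersections with $V$ are not cubic varieties, so your induction would not apply to them in any case.

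\textbf{The final paragraph misidentifies the obstruction and rests on a false claim.} The loss $B^{\del_3}$ in (\ref{Salberger_affine_3}) comes from the determinant-method step, where the auxiliary hypersurfaces have too large a degree when $d=3$. It does not come from codimension-one linear subspaces, each of which contributes only $O(B^{\dimP V})$. Moreover, a non-conical cubic in $\Pbb^4$ can contain infinitely many planes. Take $x_0x_3^2+x_1x_3x_4+x_2x_4^2=0$. It is geometrically integral, and it is not a cone because its five partial derivatives are linearly independent. Yet it contains the plane $\{tx_3=sx_4,\ s^2x_0+stx_1+t^2x_2=0\}$ for every $[s:t]\in\Pbb^1$.

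So hyperplane slicing combined with a non-uniform induction cannot prove the degree-$3$ case. A genuinely different input, such as the ad hoc argument in \cite[\S 8]{Sal23}, is required.
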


Recent studies on Uniform Dimension Growth have further produced explicit expressions for $C(V)=C(\deg V, \dimP V)$, and the small factor $s(B)$, beginning with influential work by Walsh \cite{Wal15}, which replaced $s(B)\ll_\ep B^\ep$ by $s(B)\ll 1$ in a key result counting points on projective curves, underlying the approach to Theorem \ref{thm_UDGC_degneq3}. Explicit refinements were continued by Castryck, Cluckers, Dittmann, Nguyen \cite{CCDN20}, who established polynomial dependence on degree in $C(V)$. 

Let us briefly motivate, at least informally, why it can be useful to make the dependence of the constant $C(V)$ on degree explicit, and to minimize it as much as possible. 
For example, given  a geometrically integral projective hypersurface $X \subset \Pbb^{r+1}$ of degree $d$ defined over $\Q$, a version of the determinant method may produce for each $B \geq 1$ a geometrically integral hypersurface $Y_B$ of (potentially very large) degree $D_{Y_B}$,   with the properties that $X \not\subset Y_B$, and   all rational points on $X$ of height at most $B$ lie on $Y_B$. (For example, a result of Salberger produces such a $Y_B$ with degree 
\beq\label{degree_Y_Salberger} 
D_{Y_B}\ll_{d,r,\ep}B^{(r+1)/(rd^{1/r})+\ep};
\eeq
see \cite[Thm. 0.12]{Sal23}. In particular, this yields $B^{3/(2\sqrt{d})+\ep}$ for $r=2$.) 
Then,
$$\{\bx \in X(\Q):H(\bx)\leq B\} \subset Y(\Q) \implies \{\bx\in X(\Q):H(\bx)\leq B\} \subset (X\cap Y)(\Q).$$

Suppose from now on that $X \subset \Pbb^3$ (that is, $r=2$).
Now since $X\not\subset Y_B$, $X\cap Y_B =: Z$ is a curve of degree $D_{Y_B}$. Suppose now that for some function $C(\cdot)$, it is true that for any geometrically integral curve $Z$, 
$$  \#\{\bx\in Z(\Q):H(\bx)\leq B\} \ll_\ep C(\deg Z)  B^{ 2/\deg Z +\ep}.$$
(For example, a result of Heath-Brown produces this upper bound with unspecified dependence $C(\deg Z)$, for any geometrically integral curve in $\Pbb^3$ \cite[Thm. 5]{HB02}.) 
Combining these two results to answer the original question, we would arrive at a bound like 
$$  \#\{\bx\in X(\Q):H(\bx)\leq B\} \ll_\ep C(D_{Y_B})  B^{ 2/D_{Y_B} +\ep}.$$
So long as $D_{Y_B}$ is a function that grows to infinity with $B$, this upper bound becomes  $\ll_\ep B^\ep C(D_{Y_B})$ as $B \maps \infty$. Thus on the one hand we are motivated to show $D_{Y_B} \maps \infty$ with $B$, and on the other hand we need to show that the function $C(\cdot)$ is as slow-growing as possible. (For example in $\Pbb^3$, if $C(x) \ll x^2$ were quadratic and $D_{Y_B}\approx B^{3/(2\sqrt{d})+\ep}$, this line of reasoning  would lead to an upper bound  
\[ \#\{\bx\in X(\Q):H(\bx)\leq B\} \ll_\ep (B^{3/2\sqrt{d}+\ep})^2B^{ 2/D_{Y_B} +\ep} \ll_\ep B^{3/\sqrt{d}+\ep}.\]
Any improvement on the degree in (\ref{degree_Y_Salberger}) would lead to a corresponding improvement here. See for example \cite[Thm. 3.1, Cor 3.3]{BCSSV25x}, which (roughly speaking) allows $D_{Y_B} \ll_d B^{1/\sqrt{d}}(\log B)$.
Motivated by strategies like this, the goal of proving at most quadratic dependence on degree was raised by Salberger in remarks following \cite[Thm. 0.12]{Sal23}.

There have been many results recently that make the dependence on degree more explicit. We record from \cite[Thm. 1]{CCDN20}, for all geometrically integral varieties $V\subset \Pbb^{N}$ of degree $d \geq 5$,
\beq\label{no_small_factor}
N_\proj (V,B) \ll_N d^{e(N)}B^{\dimP V}
\eeq
for some integer $e(N)$.
  Cluckers, D\`ebes, Hendel, Nguyen, Vermeulen \cite[Thm. 1.2]{CDHNV23x} proved $e(N)=7$ suffices if $V$ is a hypersurface. The best-known bounds show at most quadratic dependence on $d$,  due to Binyamini, Cluckers, and Novikov \cite{BCN24} (over $\Q$) and Binyamini, Cluckers and Kato \cite{BCK25} (including results over any global field). Precisely, as in \cite[Thm. 4]{BCK25},
 for any irreducible hypersurface $H$ of degree $d \geq 4$ in $\Pbb^N$, 
 \beq\label{BCK_statement}
 N_\proj(H,B) \ll_N d^2 B^{\dimP H}(\log B)^{\ga(N)}
 \eeq
 for some  $\ga(N)$, while for 
 degree $d=3$,
 \[ N_\proj(H,B) \ll_N  B^{\dimP H + \del_3}(\log B)^{\ga(N)},\]
 for $\del_3$ as in (\ref{del3_dfn}).
It remains an open problem to improve the exponent to $\dimP H$ for degree $3$. These results for hypersurfaces imply results for varieties, but with an increased power of $d$: by  \cite[Prop. 4.3.2]{CCDN20}, for each geometrically integral $V\subset\Pbb^{N}$ with $\dimP V=m$ and degree $d$ there is a geometrically integral hypersurface $H\subset \Pbb^{m+1}$ (hence of dimension $m$) of degree $d$ and birational to $V$ such that
\beq\label{Nproj_pass_from_hypersurface_to_V}
N_\proj(V,B) \leq d N_\proj(H,c(n)d^{e(n)} B)
\eeq
for some $c(n),e(n)$.

As noted in \cite[\S 1.4]{CCDN20}, by Example \ref{example_quadric}, (\ref{no_small_factor}) cannot be true in general for degree $d=2$. It remains open whether a bound with small factor $s(B) \ll 1$ could be true for degrees $d=3,4$.

Lower bounds have also been studied. By \cite[\S 6]{CCDN20}, for every degree $d \geq 1$ there is a geometrically integral degree $d$ hypersurface $H \subset \Pbb^N$ such that $N_\proj(H,B) \gg_N d B^{\dimP H}$, so that in general one cannot expect to achieve smaller than linear growth in $d$. This has been improved by Cluckers and Glazer \cite[Thm. 2]{CluGla25} to the statement that quadratic growth in $d$ is ``eventually tight'' (as $N \maps \infty$). More precisely,   for any $N \geq 2$ and $B_0 \geq 1$, there exists $c(N)>0$, $B \geq B_0$ and $d > B_0$, and an integral hypersurface $H \subset \Pbb^N$ of degree $d$ such that 
\[ N_\proj(H,B) \geq c(N) d^{2-2/N} B^{\dimP H}.\]
It remains an open question to understand the small discrepancy in dependence on degree, compared to the upper bound (\ref{BCK_statement}); here very recent work of Dehennin is making progress \cite{Deh26x}.

 \subsection{Type I, revisiting the affine setting}\label{sec_revisit_affineI}
 We now revisit the affine setting of Dimension Growth results. We have observed in Example \ref{example_affineI_lower_bound} that the exponent in Pila's bound (\ref{Pila_affineI}) cannot be improved if $V \subset \Abb^n$ is allowed to be an arbitrary irreducible variety, although Theorem \ref{thm_affine_hypersurface} improves it when $V$ is a hypersurface defined as the vanishing set of an absolutely irreducible form $F$ (of degree $d \neq 3$). It is natural to ask for the largest class of ``allowable'' affine varieties $V \subset \Abb^n$ of degree $d \geq 2$ for which  can (\ref{Pila_affineI}) be improved to 
 \beq\label{Pila_improvement}
 N_\aff(V,B) \leq C(n,d) B^{\dimA V-1}s(B),
 \eeq
  for a suitable small factor $s(B)$, in the terminology of Conjecture \ref{conj_affine_hypersurface}. We turn now to progress on this question.

\subsubsection{Hypersurfaces}
  We first consider the setting where $V$ is a hypersurface $H$. 
 Salberger achieved (\ref{Pila_improvement}) (with $s(B) \ll_\ep B^\ep$ for every $\ep>0$) for any  affine hypersurface $H \subset \Abb^n$ defined as the vanishing set of a polynomial $f \in \Z[X_1,\ldots,X_n]$ of degree $d \geq 4$ and $n \geq 3$, with leading form (homogeneous part of highest degree) that is   absolutely irreducible, hence verifying Conjecture \ref{conj_affine_hypersurface} in these cases \cite[Thm. 0.4]{Sal23}. (We believe the hypothesis of that theorem intended to require absolute irreducibility, as for example in \cite[Thm. 7.4]{Sal23}.)   To interpret this condition geometrically, if $f$ is a polynomial of degree $d$ in $X_1,\ldots,X_n$, let $F(T,X_1,\ldots,X_n)$ denote its homogenization. Then to say the leading  form $f_d$ of $f$ is absolutely irreducible is to say that the intersection of the homogenization $F(T,X_1,\ldots,X_n)=0$ with the plane at infinity $\{T=0\}$ is geometrically irreducible, since $V(F)\cap V(T)=V(f_d)$.

 It remains an open problem to prove an analogous result when $f$ has degree $d=3$, with leading form that is absolutely irreducible.
Instead, Salberger has proved 
\[  N_{\aff}(H,B) \ll_{n,\ep}B^{\dimA H-1+\del+\ep} ,
 \]
  with $\del=\del_3$ as in (\ref{del3_dfn}) obtained in \cite[Thm. 0.4]{Sal23}, and the smaller value $\del = \del_3'$ obtained in \cite[Thm. 16.3]{Sal15}. 
Dependence on degree  in (\ref{Pila_improvement}) for $V$ a hypersurface has been made explicit by \cite{BCN24} and \cite{BCK25}: as in \cite[Thm. 5]{BCK25}, one may take $C(n,d) \ll_n d^2$ and $s(B) \leq (\log B)^{\ga(n)}$ for some constant $\ga(n)$, for degree $d \geq 4$; while if $d =3$, they obtain 
\beq\label{Naff_deg3}
N_\aff(H,B) \ll_n B^{\dimA H-1 + \del_3}(\log B)^{\ga(n)}.\eeq

Next, consider the case where $V$ is a hypersurface $H$, but we aim to relax the condition that the leading form of the defining polynomial of $H$ is absolutely irreducible. The work \cite{CDHNV23x} replaces this   with a condition that the hypersurface is \emph{not cylindrical over a curve (NCC)}. 
Precisely, as in \cite[Dfn. 1.3]{CDHNV23x},   a hypersurface $H \subset \Abb^n$ is said to be cylindrical over a curve if there exists a $\Q$-linear map $L: \Abb^n(\Q) \maps \Abb^2(\Q)$ and a curve $C$ in $\Abb^2(\Q)$ such that $H=L^{-1}(C)$. Otherwise, $H$ is not cylindrical over a curve. A polynomial $f \in \Q[X_1,\ldots,X_n]$  is said to be NCC precisely when the hypersurface $H\subset \Abb^n$ defined as the vanishing set $V(f)$ is NCC.

\begin{rem}[On being cylindrical over a curve]\label{remark_NCC} 
As remarked in \cite{CDHNV23x}, the property that a polynomial $f$ is cylindrical over a curve is equivalent to saying there exists a polynomial $g \in \Q[Y_1,Y_2]$ and linear forms $L_1(\bX),L_2(\bX) \in \Q[X_1,\ldots,X_n]$ such that $f(\bX) = g(L_1(\bX),L_2(\bX))$. If this holds, then by a $\GL_n(\Q)$ change of variables, $f(\bX)$ can be made to depend on (at most) two variables.
Thus if $f \in \Z[X_1,\ldots, X_n]$ with $n \geq 3$ is NCC, then $f$ depends on at least 3 variables 
 after any affine linear transformation; we will see  in \S \ref{sec_revisit_affineII} that related ``nondegeneracy'' conditions are also useful when proving  upper bounds for affine thin sets of type II as in \cite{BPW25x}.

 If the leading form of $f$ is absolutely irreducible, then $f$ must be NCC. For suppose $f$ is cylindrical over the curve $C = \{g(x,y)=0\}$, so that $f = g(L_1(\bx),L_2(\bx))$. Let $g_0$ denote the leading form of $g$, which cannot be absolutely irreducible since it is a binary form (see e.g. \cite[Remark 5.3]{BonPie24}). Thus the leading form of $f$, namely $g_0(L_1(\bx),L_2(\bx))$ is never absolutely irreducible when $f$ is cylindrical over a curve. On the other hand, there are many polynomials whose leading forms are not absolutely irreducible that are also not cylindrical over curves.
 \end{rem}

Now consider the class of hypersurfaces $H$ that are NCC. Additionally, assume that $H \subset \Abb^n$ is defined as the vanishing set of a polynomial $f \in \Z[X_1,\ldots,X_n]$ of degree $d \geq 4$ such that $f$ is irreducible over $\Q$ and has leading form $f_d$ with no linear factors over $\Q$ (in which case $f_d$ is said to be 1-irreducible).   For this class, \cite[Thm. 1.5]{CDHNV23x} shows
(\ref{Pila_improvement}) with $C(n,d) \ll_n d^7$ and small factor $s(B) \leq (\log B)^{\ga(n)}$. If the hypothesis is strengthened to assume the leading form $f_d$ of $f$ has no factors of degree $\leq 2$ over $\Q$ (in which case $f_d$ is said to be 2-irreducible), this can be strengthened to $s(B) \leq 1$ for $d \geq 5$. 
Under hypotheses such as these, it remains an open problem to achieve (\ref{Pila_improvement}) for degree $d=3$. 
So far, \cite[Thm. 1.5]{CDHNV23x} has achieved the bound (\ref{Naff_deg3}) if $H$ is a hypersurface that is NCC and $f$ is of degree $3$ with 1-irreducible leading form $f_3$.

Regarding lower bounds, by \cite[\S 6]{CCDN20}, for every $d\geq 1$ there exists a hypersurface $H \subset \Abb^n$ defined as the vanishing set of an absolutely irreducible degree $d$ polynomial $f \in \Q[X_1,\ldots,X_n]$ such that $N_\aff(H,B) \gg_n d^2 B^{n-2}$, so that in general one cannot expect to prove smaller than quadratic growth in $d$. 

In contrast, if $f \in \Z[X_1,\ldots,X_n]$ is cylindrical over a curve and we consider the hypersurface $H \subset \Abb^n$ defined as the vanishing locus of $f$,  then  
\beq\label{hypersurface_compare_curve} 
N_\aff(H,B) \ll_n B^{n-2}N_\aff(C,B)
\eeq
for a curve $C \subset \Abb^2(\Q)$ \cite[Remark 1.6]{CDHNV23x}. The resulting curve $C$ could contain many points; for example $C: x_1=x_2^d$ contains $\gg B^{1/d}$ points  $(x_1,x_2) \in [-B,B]^2 \cap \Z^2$, in which case one could not expect to improve on the exponent in Pila's bound (\ref{Pila_affineI}). This shows that the NCC condition cannot in general be removed   from the above results. But   a conjectural approach in \cite[Thm. 1.9]{CDHNV23x}  suggests that $N_\aff(H,B) \ll_{n,d,\ep} B^{\dimA H-1+\ep}$ may be true for all affine hypersurfaces $H \subset \Abb^n$ defined as the vanishing set of a polynomial $f \in \Z[X_1,\ldots,X_n]$ of degree $d \geq 2$ and $n\geq 2$ such that the leading form $f_d$ of $f$ has no linear factors over $\Q$. 

\subsubsection{Varieties}
We now return to the question of proving (\ref{Pila_improvement}), for more general varieties than hypersurfaces. Here Vermeulen has generalized the notion of NCC. Let $V \subset \Abb^n$ be an affine variety of dimension $m$. Then $V$ is said to be cylindrical over a curve if there exists a $\Q$-linear map $L: \Abb^n \maps \Abb^{n-m+1}$ such that $L(V)$ is a curve; otherwise, $V$ is said to be NCC. Under the hypothesis that $V \subset \Abb^n$ is NCC of degree $d \geq 4$, \cite[Thm. 1.2]{Ver24} has proved (\ref{Pila_improvement}) with small factor $s(B) \ll_\ep B^\ep$ for any $\ep>0$ (and unspecified $C(n,d)$). It remains an open question to achieve this for degree $d=3$. So far, in this setting, (\ref{Naff_deg3}) is obtained with a $B^\ep$ factor, if $d=3$.

On the other hand, if $V$ is cylindrical over a curve, and so there is a $\Q$-linear map $L: \Abb^n \maps \Abb^{n-m+1}$ such that $L(V)$ is a curve, then
 \[ N_\aff(V,B) \leq B^{\dimA V-1}N_\aff(L(V),B)\] 
 so that consideration reduces to counting points on the curve $L(V)$, leading to similar considerations as in (\ref{hypersurface_compare_curve}). 
 
 It remains an open question to characterize the largest ``allowable'' class of affine varieties for which the main exponent in (\ref{Pila_improvement}) is achievable.
This concludes our discussion of Dimension Growth results, relevant to thin sets of type I in both the projective and affine settings.

\section{Survey of   type II thin sets}\label{sec_lit_typeII}
 We now shift focus to thin sets of type II. Our path will follow a similar pattern to that above: studying the main conjecture in the projective setting will require results from the affine setting;   we must restrict attention to special cases in the affine setting in order to obtain suitably strong results to  attain the central conjecture in the projective setting; and finally we can ask for the largest ``allowable'' class in the affine setting for which strong results (analogous to the main conjecture in the projective setting) can be obtained. 

 \subsection{Type II, projective}
 By definition, if $M$
  is of type II
 then there is an irreducible projective  algebraic variety $Z$ over $\Q$ with $\dimP  Z = \dimP  (\mathbb{P}^{n-1})$, and a  dominant morphism $\pi: Z \rightarrow \mathbb{P}^{n-1}$ with generically finite fibres,   of degree $d \geq 2$, with $M \subset \pi (Z(\Q))$.  In this setting, the central conjecture is:
\begin{conj}[Projective type II, nonuniform]\label{conj_Serre_projII}
Let $M \subset \Pbb^{n-1}(\Q)$ be a thin set of type II. 
Then 
\[N_{\Pbb^{n-1}}(M,B) \leq C(M) B^{n-1} s(B),\]
    in which $C(M)$ is a constant depending on $M$, and $s(B)$ is a ``small factor,'' which could take the form  $s(B)\ll_\ep B^\ep$ (for any $\ep>0$); or $s(B) \leq (\log B)^{\ga(M)}$ for some $\ga(M) > 0$; or   $s(B) \leq 1$.
\end{conj}
This is stated in \cite[Ch. 13, p. 178]{Ser97} as a reasonable expectation, with small factor  $s(B)\ll (\log B)^{\ga(M)}$, and no discussion of how $C(M)$ depends on $M$. To say that $C(M)$ depends on $M$ is to say it depends on the morphism $\pi: Z \maps \Pbb^{n-1}$, and we will study this dependence in detail later in \S \ref{sec_Serre}-\ref{sec_uniformity_question}. 
 
Serre proved this for $\Pbb^1$, namely showing that $N_{\Pbb^{1}}(M,B) \ll B$; see \cite[\S 9.7. Thm. page 133]{Ser97}.
Broberg \cite{Bro03N} again proved Conjecture \ref{conj_Serre_projII} for $\Pbb^1$, in the stronger form that $N_{\Pbb^{1}}(M,B)\ll_{M,\ep} B^{2/d+\ep}$, in which $d$ denotes the degree of $\pi$. Broberg also proved the conjecture   for $\Pbb^2$ when $d \geq 3$, with small factor $s(B)\ll_\ep B^\ep$ and unspecified dependence $C(M)\ll_M 1$. For $\Pbb^2$ and $d=2$, Broberg proved $N_{\Pbb^{2}}(M,B) \ll_{M,\ep} B^{9/4+\ep}$.

In the case where $\pi$ is a smooth cyclic cover, the conjecture was proved by Heath-Brown and Pierce \cite{HBPie12} for all $n \geq 10$, with mildly weaker bounds for $3 \leq n \leq 9$.

Historically, the first general improvement on the trivial bound $N_{\Pbb^{n-1}}(M,B) \ll_n B^n$, valid for any projective thin set of type II, was Theorem \ref{thm_baseline_affine}, which yields  the baseline upper bound
\beq\label{projII_baseline} N_{\Pbb^{n-1}}(M,B) \leq C(M) B^{n-1/2}(\log B)^{\ga(M)}.
\eeq
This theorem is stated for any thin set in \cite[Ch. 13 Thm. 3]{Ser97}, but it is only nontrivial  for thin sets of type II, by recalling the trivial bound (\ref{projI_trivial_thin_set})   for type I. By an application of Lemma \ref{lemma_affine_version_is_thin}, proving (\ref{projII_baseline}) can be reduced to considering affine thin sets of type II, which we turn to next in \S \ref{sec_affineII}.  

We defer discussing works that improved the baseline bound (\ref{projII_baseline})  (namely \cite{Mun09}, \cite{Bon21}, \cite{BonPie24}, \cite{BPW25x}),     and the recent resolution of Conjecture \ref{conj_Serre_projII} by \cite{BCSSV25x} in nearly all cases, to \S \ref{sec_projII_revisit}, as all these results rely on progress in the analogous affine setting. 

\begin{rem}[Deduction of (\ref{Serre_projI})]
\label{remark_Serre_progress_DGC_via_projII}
Serre made the following observation to prove (\ref{Serre_projI}), as in \cite[p. 178 Thms. 3,4]{Ser97}. Suppose an irreducible variety $V \subsetneq \Pbb^{n-1}$ has $\dimP V=m \leq n-2$ and degree $d \geq 2$. Select coordinates for $\Pbb^{n-1}$ so that projection onto the first $m+1$ coordinates induces a finite morphism $\pi: V \rightarrow \Pbb^{m}$ of degree $d$. The image $\pi(V(\Q))$ is then a projective thin set in $\Pbb^m$ (arguing as in Lemma \ref{lemma_piV_thin_set}). Applying (\ref{projII_baseline}) then shows 
\[ N_{\Pbb^m}(\pi(V(\Q)),B) \ll C(V)B^{(m+1)-1/2}(\log B)^{\ga(V)} = C(V)B^{\dimP V+1/2}(\log B)^{\ga(V)}. 
\]
 Each   fibre of $\pi$ has a finite   number of points, so this implies 
 \[N_\proj (V,B) \leq C(V) B^{\dimP V+1/2}(\log B)^{\ga(V)} .\]   
\end{rem}

We end this section with a technical lemma, which shows that in order to prove Serre's Conjecture \ref{conj_Serre_projII} for a type II thin set $M$ defined according to a suitable morphism $\pi:Z\rightarrow \mathbb{P}^{n-1}$, it is enough to prove it when $\pi$ is quasi-finite. (We will later assume this in Proposition \ref{prop_BCSSV_affineII_projII}.)   We recall from Dfn. 29.20.1 part (3) and the equivalence in Lemma 29.20.10 of \cite[\href{https://stacks.math.columbia.edu/tag/01TC}{Section 01TC}]{StaPro} that for schemes $X,Y$, a morphism $f:X\rightarrow Y$ is quasi-finite precisely when $f$ is locally of finite type, quasi-compact, and has finite fibres.

\begin{lem}\label{lemma_quasi_finite}
Let $Z \subset \Pbb^N$ be an irreducible projective variety of dimension $n-1$ defined over $\Q$.
Let $\pi : Z \maps \Pbb^{n-1}$ with generically finite fibres, be a dominant morphism of degree $d \geq 2$ defined over $\Q$, so that $\pi(Z(\Q))$ is a thin set of type II in $\Pbb^{n-1}$. Then there exists an open dense subset $U \subset Z$ such that $\left. \pi \right|_U:U\rightarrow \mathbb{P}^{n-1}$ is a quasi-finite morphism, and
\[ N_{\mathbb{P}^{n-1}}(\pi(Z(\Q)),B) \leq  N_{\mathbb{P}^{n-1}}(\left. \pi \right|_U(U(\Q)),B)+O_{\pi }(B^{n-1}).\]
\end{lem}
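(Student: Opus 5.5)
We will take $U$ to be the complement in $Z$ of the locus where the fibres of $\pi$ are positive dimensional. Concretely, let
\[ E:=\{z \in Z : \dim_z \pi^{-1}(\pi(z)) \geq 1\}. \]
By upper semicontinuity of fibre dimension (Chevalley's theorem, applicable since $\pi$ is of finite type, by the lemma in \S\ref{sec_morphism_terminology}), $E$ is closed in $Z$. Since $\pi$ has generically finite fibres and $\dimP Z=n-1=\dimP \Pbb^{n-1}$, the generic point of $Z$ is not in $E$, so $E \subsetneq Z$ is a proper closed subset. Set $U=Z\setminus E$; it is open and dense because $Z$ is irreducible. Then $\left.\pi\right|_U$ has finite fibres. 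It is locally of finite type, as the restriction of a morphism of finite type to an open subscheme, and it is quasi-compact because $U$ is noetherian (the same citation as in the lemma of \S\ref{sec_morphism_terminology}). Hence $\left.\pi\right|_U$ is quasi-finite in the sense recalled before the statement.

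Next we split $\pi(Z(\Q)) \subset \left.\pi\right|_U(U(\Q)) \cup \pi(E(\Q))$, so that
\[ N_{\Pbb^{n-1}}(\pi(Z(\Q)),B)\leq N_{\Pbb^{n-1}}(\left.\pi\right|_U(U(\Q)),B)+N_{\Pbb^{n-1}}(\pi(E(\Q)),B). \]
It remains to show the second term is $O_\pi(B^{n-1})$. Every irreducible component $E_i$ of $E$ has $\dim E_i\leq n-2$. Since the image of a variety under a morphism has dimension at most that of the source, the closure $\overline{\pi(E_i)}$ is a proper closed subvariety of $\Pbb^{n-1}$ of dimension $\leq n-2$. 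This is the same argument as in Lemma \ref{lemma_piV_thin_set}, now in the projective setting; in fact, since $Z$ is projective, $\pi(E_i)$ is already closed. So $\pi(E(\Q))$ is a thin set of type I, contained in $V:=\bigcup_i \pi(E_i)$. By the trivial bound (\ref{projI_trivial_thin_set}), i.e. Lemma \ref{lemma_projI_trivial_bound}, we get $N_{\Pbb^{n-1}}(\pi(E(\Q)),B)\ll_{\deg V,n} B^{n-1}$. The number of components and the degrees of $V$ depend only on $\pi$, so this is $O_\pi(B^{n-1})$.

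The main technical point is the first step: justifying that $E$ is closed and proper. I would cite EGA IV 13.1.3 for the upper semicontinuity of $z\mapsto \dim_z \pi^{-1}(\pi(z))$. Properness then follows from Lemma \ref{lemma_fibre_dim} together with generic finiteness: over a dense open subset of $\Pbb^{n-1}$ the fibres are finite, so its preimage, which is nonempty by dominance, misses $E$.

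One could alternatively invoke the stronger fact that $\pi$ is finite over a dense open $W\subset\Pbb^{n-1}$, and take $U=\pi^{-1}(W)$. In that case the complement $\Pbb^{n-1}\setminus W$ directly supplies the type I set. That version is equally short, and either choice gives the lemma.
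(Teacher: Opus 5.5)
Your proof is correct. Your main construction differs from the paper's only in how you choose $U$.

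The paper uses the fact that a generically finite dominant morphism is finite over a dense open subset. It picks open affine dense subsets $U\subset Z$ and $V\subset\Pbb^{n-1}$ with $\left.\pi\right|_U:U\to V$ finite. It then obtains quasi-finiteness from local finite type plus quasi-compactness ($U$ is noetherian), and bounds the image of $Z\setminus U$ by the trivial bound. This is essentially the alternative you sketch at the end.

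Your primary route instead takes $U=Z\setminus E$, the quasi-finite locus, which you cut out using Chevalley's upper semicontinuity of $z\mapsto \dim_z\pi^{-1}(\pi(z))$. What this buys you is that $U$ is canonical: it is the largest open subset on which $\pi$ is quasi-finite, so the exceptional set is as small as possible. The cost is a somewhat heavier input (EGA IV 13.1.3). You also lose the paper's stronger by-product that $\pi$ is actually finite over an open subset of the target, though that is not needed here.

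Two minor remarks. Using projectivity to see that each $\pi(E_i)$ is already closed is a small simplification over the paper's step of bounding the closure of the image. The appeal to Lemma \ref{lemma_fibre_dim} in your properness justification is superfluous, since generic finiteness alone places the generic point of $Z$ outside $E$.

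In both versions the $O_\pi(B^{n-1})$ term remains inexplicit, because the degree of the exceptional image is not controlled.
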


\begin{proof}

 Let $Z$   and $\pi: Z\rightarrow \mathbb{P}^{n-1}$, a generically-finite dominant morphism, be as given. As recalled in \S \ref{sec_morphism_terminology}, $\pi$ is locally of finite type.  Since $\pi$ has generically finite fibres, we can find $V\subset\mathbb{P}^{n-1}$ and $U\subset Z$ open affine, dense subsets such that $\pi_U :=\left. \pi \right|_U:U\rightarrow V$ is finite. In particular, $\#\pi_U^{-1}(v)<\infty$ for every $v\in V$. 
By Lemma 20.15.2 of  \cite[\href{https://stacks.math.columbia.edu/tag/01T0}{Section 01T0}]{StaPro}, since $\pi: Z \maps \Pbb^{n-1}$ is locally of finite type, then for any open sets $U \subset Z$ and $V \subset \Pbb^{n-1}$, $\pi_U: U \maps V$ is locally of finite type.  By the remark immediately before the lemma, to conclude that $\pi_U$ is quasi-finite, it suffices to show that $\pi_U$ is quasi-compact: Since $U\subset Z$ is an affine subset and $Z$ is noetherian then $U$ is noetherian and hence $\pi_{U}$ is quasi compact thanks to \cite[\href{https://stacks.math.columbia.edu/tag/01OU}{Lemma $28.5.8$}]{StaPro}. We may conclude that $\pi_U: U \maps \Pbb^{n-1}$ is quasi-finite.

Now let $Z_0=Z\setminus U$, and consider the image $\pi(Z_0)$. Since $Z_0$ does not contain the generic point of $Z$, it follows that for some  proper closed subset $W \subsetneq \mathbb{P}^{n-1}$, $\overline{\pi(Z_0)}\subset W$ (arguing as in Lemma \ref{lemma_piV_thin_set}). Consequently,
\[
\begin{split}
N_{\mathbb{P}^{n-1}}(\pi(Z(\Q)),B)&= N_{\mathbb{P}^{n-1}}(\pi(U(\Q)),B)+N_{\mathbb{P}^{n-1}}(\pi(Z_0(\Q)),B)\\
&\leq N_{\mathbb{P}^{n-1}}(\pi_U(U(\Q)),B)+N_{\mathrm{proj}}(W,B)\\& = N_{\mathbb{P}^{n-1}}(\pi(U(\Q)),B)+O_{N,\deg W,\dimP W}(B^{n-1}),
\end{split}
\]
where in the last line we used the trivial bound of Lemma \ref{lemma_projI_trivial_bound}.  Since $\deg W$ is inexplicit, we write the last term as $O_{\pi }(B^{n-1}).$
\end{proof}

\subsection{Type II, affine}\label{sec_affineII}
By Lemma \ref{lemma_affine_version_is_thin}, the baseline upper bound
 (\ref{projII_baseline}) is implied by the result that for any affine thin set $M \subset \Abb^n$, 
 \beq\label{affineII_baseline} N_{\Abb^{n}}(M,B) \leq C(M) B^{n-1/2}(\log B)^{\ga(M)}.
\eeq
This appears as \cite[Ch. 13 Thm. 1]{Ser97}.
Recall from Lemma \ref{lemma_typeII_poly_interp} that if  $M \subset \mathbb{A}^{n}(\Z)$ is an affine thin set of type II with corresponding dominant morphism $\pi: Z \maps \Abb^n$, with $\dimA Z=n$ and $\pi$ of degree $ d \geq 2$, such that $M \subset (\pi(Z(\Q)) \cap \Z^n)$, there exists a proper closed subset $V \subsetneq Z$ and an irreducible polynomial $F \in \Z[Y,X_1,\ldots,X_n]$  that is monic in $Y$ and has $\deg_YF=d$, such that 
\[ M \subset \left(\left. \pi \right|_V (V) \cup \{\bx \in \Z^n: \text{$F(Y,\bx)=0$ is solvable over $\Z$}\} \right).\]
Since $\dimA V < n$, it follows that $\dimA \left. \pi \right|_V(V) \leq n-1$ (arguing as in Lemma \ref{lemma_piV_thin_set}), so applying  the trivial bound  from Lemma \ref{lemma_affineI_trivial_bound} shows: 
\[N_{\A^n}(\left. \pi \right|_V(V),B) \ll_{\deg \left. \pi \right|_V, \dimA \left. \pi \right|_V} B^{n-1} \ll_\pi B^{n-1}.\] 
(Here the dependence on $\pi$ is inexplicit since the degree of $V$ is inexplicit.)
Consequently,   the counting function 
\beq\label{Ncov_aff_dfn} 
N^\cov_{\Abb^n}(F,B):=\{\bfx\in [-B,B]^n\cap \Z^n:  \text{$F(Y,\bfx)=0$ is solvable over $\Z$}\}
\eeq
is the essential object to prove (\ref{affineII_baseline}) and to study $N_{\Abb^n}(M,B)$ further.

The following example (for $d=2$) shows that (\ref{affineII_baseline}) cannot essentially be improved (analogous to Example \ref{example_affineI_lower_bound} in the affine type I case).
\begin{example}\label{example_affineII_lower_bound}
For a given integer $d \geq 2$, let
$F(Y,X_1,\ldots,X_n)=Y^d-(X_1+\cdots +X_n).$
Then 
$N^\cov_{\Abb^n}(F,B)   \gg_n B^{n-1/d}$, since 
\[
 N^\cov_{\Abb^n}(F,B) = \sum_{\substack{1 \leq z \leq n B\\ \text{$z$ a $d$th power}}} \sum_{\substack{x_1+\cdots+x_n=z \\ \bx \in [-B,B]^n}} 1 \gg_n B^{1/d} \cdot B^{n-1}.
\]
   \end{example}
  Thus, if the route to bounding $N_{\Pbb^{n-1}}(M,B)$   is via the simplistic construction of a corresponding affine thin set $M'$ via Lemma \ref{lemma_affine_version_is_thin}, in general no further progress below the baseline bound (\ref{projII_baseline}) can be made.
  Fortunately, a different passage from projective thin sets of type II to affine thin sets of type II can be devised (playing a role analogous to Proposition \ref{prop_BHBS06_affineI_projI} in the type I setting).

\begin{prop}\label{prop_BCSSV_affineII_projII}
Let $Z \subset \Pbb^N$ be an irreducible projective variety of dimension $n-1$ defined over $\Q$.  
Let $\pi : Z \maps \Pbb^{n-1}$ be a quasi-finite dominant morphism of degree $d \geq 2$ defined over $\Q$, so that $\pi(Z(\Q))$ is a thin set of type II in $\Pbb^{n-1}$. Then there exists a proper closed subset $V \subsetneq Z$, a positive integer $e$, and an irreducible polynomial $F \in \Z[Y,X_1,\ldots, X_n]$ of the form 
\beq\label{BCSSV_affineII_projII_F_shape}
F(Y,\bX) = Y^d + Y^{d-1}f_1(\bX) + \cdots + f_d(\bX),
\eeq
in which each $f_i \in \Z[X_1,\ldots,X_n]$ is homogeneous of degree $e\cdot i$, such that 
\[ N_{\Pbb^{n-1}} (\pi(Z(\Q)),B) \leq  N^\cov_{\Abb^n}(F,B)+N_\proj(\left.\pi\right|_V(V(\Q)),B).\]
Thus 
\[ N_{\Pbb^{n-1}} (\pi(Z(\Q)),B) \leq  N^\cov_{\Abb^n}(F,B)+O_\pi(B^{n-1}).\]
We will call $F$ the polynomial associated to the thin set $\pi(Z(\Q))$.
 \end{prop}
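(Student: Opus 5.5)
The plan is to imitate the proof of Lemma \ref{lemma_typeII_poly_interp}, but to work on the affine cone over $Z$ so that the primitive element has a minimal polynomial that is weighted-homogeneous. Write $\pi=[g_1:\cdots:g_n]$ locally. The function field $\Q(Z)$ is a degree $d$ extension of $\Q(\Pbb^{n-1})=\Q(X_2/X_1,\ldots,X_n/X_1)$. Pass to the cones: let $\widehat{\Q(\Pbb^{n-1})}=\Q(X_1,\ldots,X_n)$, and form the compositum $K=\Q(Z)\otimes_{\Q(\Pbb^{n-1})}\Q(X_1,\ldots,X_n)$. This is a degree $d$ field extension of $\Q(X_1,\ldots,X_n)$, since $X_1$ is transcendental over $\Q(Z)$. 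It carries a natural grading in which $X_i$ has degree $1$.

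First I choose a primitive element $t=\tau X_1^{e}$ with $\tau\in \Q(Z)$, and $e$ large enough that $t$ is integral over $\Z[X_1,\ldots,X_n]$. Scaling by a suitable power of $X_1$ clears denominators of the minimal polynomial of $\tau$ over $\Q(\Pbb^{n-1})$. The element $t$ is homogeneous of degree $e$ under the scaling $\bX\mapsto\lambda\bX$. Hence its minimal polynomial $F(Y,\bX)=Y^d+f_1Y^{d-1}+\cdots+f_d$ over $\Q(\bX)$ is invariant under $(Y,\bX)\mapsto(\lambda^e Y,\lambda\bX)$. This forces each $f_i$ to be homogeneous of degree $ei$, and integrality gives polynomial coefficients in $\Z[\bX]$ after rescaling $Y$ by an integer. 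The polynomial $F$ is irreducible because $t$ generates a degree $d$ extension, which gives the shape (\ref{BCSSV_affineII_projII_F_shape}).

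Next, as in the proof of Lemma \ref{lemma_typeII_poly_interp}, the hypersurface $W=\{F=0\}\subset\Abb^{n+1}$ is birational to the cone over $Z$. Concretely, $\tau$ is a regular function on a dense open $Z\setminus V$, where $V$ is the union of the pole and indeterminacy loci of $\tau$ and of the locus where $t$ fails to generate. This $V$ is a proper closed subset.

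For a point $z\in (Z\setminus V)(\Q)$ with $\pi(z)=[x_1:\cdots:x_n]$ primitive, we have $x_1\neq 0$ after shrinking $V$ or permuting coordinates. Then $y=\tau(z)x_1^e$ is a rational root of $F(Y,\bx)=0$. Since $F$ is monic with integer coefficients, the root $y$ is an integer. This gives $N_{\Pbb^{n-1}}(\pi(Z(\Q)),B)\le N^\cov_{\Abb^n}(F,B)+N_\proj(\pi|_V(V(\Q)),B)$.

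To handle all coordinate charts simultaneously, I will use a single element $\tau$ and choose $e$ so that $\tau X_i^e$ is integral for each $i$. Equivalently, I use the homogeneous element itself, $t$ being a section of $\pi^*\calO(e)$. That way the construction is chart-independent.

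Finally, $\pi|_V(V)$ lies in a proper closed subset of $\Pbb^{n-1}$, arguing as in Lemma \ref{lemma_piV_thin_set} together with quasi-finiteness, so $\dim \pi(V)\le n-2$. Lemma \ref{lemma_projI_trivial_bound} then gives the bound $O_\pi(B^{n-1})$.

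The main obstacle is the construction of the homogeneous integral generator $t$ and the verification that the coefficients $f_i$ are genuinely homogeneous of degree $ei$. The key point is to realize $t$ as a global section of $\pi^*\calO(e)$ that generates the function field; ampleness of $\calO(e)$ on $Z$ for large $e$ (pulled back under the quasi-finite, hence finite-on-an-open, map $\pi$) supplies such sections. Clearing integer denominators while keeping $F$ monic is routine.
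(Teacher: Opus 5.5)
Your argument is correct in substance, and it supplies something the paper does not. The paper never proves the first inequality: it quotes it from \cite[Lemma 2.4]{BCSSV25x} (building on \cite[Lemma 3]{Bro03N}). It only derives the $O_\pi(B^{n-1})$ consequence, and you do that the same way, via $\dim\overline{\pi(V)}\le \dim V\le n-2$ and Lemma \ref{lemma_projI_trivial_bound}.

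What you add is a self-contained construction: the proof of Lemma \ref{lemma_typeII_poly_interp} run on the cone field $\Q(Z)(X_1)\supset \Q(\bX)$. There the automorphism fixing $\Q(Z)$ and sending $X_1\mapsto \lambda X_1$ forces the minimal polynomial of $t=\tau X_1^e$ to have coefficients homogeneous of degree $ei$. This chart version uses only that $\pi$ is dominant of degree $d$, not projectivity or quasi-finiteness. It also identifies $V$ concretely: the non-regular locus of $\tau$ together with $\pi^{-1}(\{X_1=0\})$, both proper closed since $\pi$ is dominant. To force $x_1\neq 0$ you enlarge $V$; permuting coordinates is not available once $F$ is fixed. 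Like the citation, this route gives no control on $\deg V$.

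Two steps need repair.

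\emph{Clearing denominators.} A power of $X_1$ clears the denominators of the minimal polynomial of $\tau$ only when $\tau$ is integral over $\Q[X_2/X_1,\ldots,X_n/X_1]$. For an arbitrary primitive element, a coefficient such as $X_1/(X_1+X_2)$ is never cleared this way. So first replace $\tau$ by $h\tau$ for a suitable nonzero polynomial $h$ in the $X_j/X_1$. Alternatively, use $t=\tau q(\bX)$ with $q$ a homogeneous common denominator.

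\emph{Ampleness.} Being finite on an open set does not make $\pi^*\calO(1)$ ample on $Z$. The correct reason is that $Z$ is projective, so $\pi$ is proper, and proper plus quasi-finite means finite. Then for $e\gg 0$ there is $t\in H^0(Z,\pi^*\calO(e))$ with $t/X_1^e$ primitive: use global generation of $\pi_*\calO_Z(e)$ and take a generic $\Q$-linear combination of sections. Each $t/X_j^e$ is then regular on $\pi^{-1}(\{X_j\neq 0\})$, and $F(t,\bX)=0$ can be evaluated at every point of $Z(\Q)$ on a chart with $x_j\neq 0$. So in this projective setting one may even take $V=\emptyset$.

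This is what your sentence about making \emph{$\tau X_i^e$ integral for each $i$} should say. As written it is not meaningful; the right condition is that $t/X_i^e$ be regular on the $i$th chart. Finally, the \emph{locus where $t$ fails to generate} is not a pointwise notion and can simply be dropped.
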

 This is \cite[Lemma 2.4]{BCSSV25x}, which builds on ideas of \cite[Lemma 3]{Bro03N}. (Compare to the affine analogue in    Lemma \ref{lemma_typeII_poly_interp}.)
 The last conclusion follows similarly to the argument of Lemma \ref{lemma_piV_thin_set}: since $V\subsetneq Z$ is a proper closed subset and $Z$ is irreducible so $\dimP V < \dimP Z$,    the trivial bound  of Lemma \ref{lemma_projI_trivial_bound} for counting points in a projective variety implies:
 \[ N_\proj(\left.\pi\right|_V(V),B) \ll_{\dimP \left.\pi\right|_V(V),\deg \left.\pi \right|_V(V)} B^{\dimP \left.\pi\right|_V(V)+1} \ll_\pi B^{(n-2)+1} \ll_\pi B^{n-1}.\] 
 Since $\deg \left. \pi \right|_V(V)$ is inexplicit, the dependence on $\pi$ is inexplicit.

To use Proposition \ref{prop_BCSSV_affineII_projII} to prove Conjecture \ref{conj_Serre_projII}, it suffices to consider $N^\cov_{\Abb^n}(F,B)$   in the special case that $F$ is absolutely irreducible, by the following lemma (analogous to Lemma \ref{lemma_typeI_geometrically_integral} for type I thin sets).
   \begin{lem}\label{lemma_typeII_geometrically_integral}
Let $F(Y,\bfX)=Y^{d}+f_1(\bfX)Y^{d-1}+\cdots + f_{d}(\bfX)\in\mathbb{Z}[Y,X_1,\dots,X_n]$.
Suppose $F$ is irreducible but not absolutely irreducible. Then $N^\cov_{\Abb^n} (F,B)\ll_{n,\deg F} B^{n-1}$.
\end{lem}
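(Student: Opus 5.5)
The plan is to mimic the proof of Lemma \ref{lemma_typeI_geometrically_integral}: show that every $\bx$ counted by $N^\cov_{\Abb^n}(F,B)$ lies on a proper subvariety of $\Abb^n$ of bounded degree, and then apply the trivial bound of Lemma \ref{lemma_affineI_trivial_bound}.

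Since $F$ is irreducible over $\Q$ but not over $\overline{\Q}$, and $F$ is monic in $Y$, we factor $F=\prod_{i=1}^\ell G_i$ over $\overline{\Q}$ with $\ell\ge 2$. Each $G_i$ can be taken monic in $Y$ and absolutely irreducible. The Galois group permutes the $G_i$ transitively, since otherwise $F$ would have a nontrivial factor over $\Q$. All $G_i$ are defined over a number field $K$ of degree $\ll_{\deg F}1$.

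Now suppose $\bx\in\Z^n$ is counted, so that $F(y,\bx)=0$ for some $y\in\Z$. Then $G_i(y,\bx)=0$ for some $i$. Choose $\sigma\in\Gal(K/\Q)$ with $\sigma(G_i)=G_j\neq G_i$. Because $(y,\bx)$ is rational, applying $\sigma$ gives $G_j(y,\bx)=0$ as well. So $y$ is a common root of $G_i(Y,\bx)$ and $G_j(Y,\bx)$, and therefore
\[ R_{ij}(\bx):=\mathrm{Res}_Y(G_i,G_j)(\bx)=0. \]
Because $G_i\ne G_j$ are distinct monic irreducible polynomials in $\overline{\Q}(\bX)[Y]$ (distinct monic irreducibles over $\overline{\Q}[\bX]$ stay coprime over the fraction field by Gauss), the resultant $R_{ij}$ is a nonzero polynomial in $\bX$. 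Its degree is $\ll_{\deg F}1$.

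Set $R:=\prod_{i\ne j}R_{ij}$, or, to stay over $\Q$, take a norm. This is a nonzero polynomial of degree $\ll_{n,\deg F}1$, possibly with algebraic coefficients, and every counted $\bx$ satisfies $R(\bx)=0$. We then apply Lemma \ref{lemma_Schwartz_Zippel} over the ring of integers of $K$, or to a rational multiple of a nonzero $\Q$-linear component as in Lemma \ref{lemma_field_of_dfn}, with $S=[-B,B]\cap\Z$. This gives $N^\cov_{\Abb^n}(F,B)\le \deg R\,(2B+1)^{n-1}\ll_{n,\deg F}B^{n-1}$.

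The main point to check carefully is that $R_{ij}\not\equiv 0$. Monicity in $Y$ is what makes this work: it guarantees the leading coefficients do not vanish under specialization, so the resultant genuinely detects common roots of $G_i(Y,\bx)$ and $G_j(Y,\bx)$. It also guarantees the factors $G_i$ can be normalized to be monic. Note that a nonzero $R$ with algebraic coefficients causes no trouble, since Schwartz--Zippel applies over $\calO_K$ after clearing denominators. Because the degree bound never involves $\|F\|$, the resulting bound is uniform.
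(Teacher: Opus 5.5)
Your proof is correct and is essentially the paper's argument. In both, a rational root of $G_i(Y,\bx)$ is also a root of a Galois-conjugate factor $G_j(Y,\bx)$ with $j\neq i$, so a fixed nonzero polynomial of degree $\ll_{n,\deg F}1$ vanishes at $\bx$, and Schwartz--Zippel finishes.

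The only difference is the choice of that polynomial. The paper uses the discriminant $\Delta_F=\mathrm{disc}_Y F\in\Q[\bX]$ and shows $\Delta_F\not\equiv 0$ via Hilbert irreducibility. You use $\prod_{i\neq j}\mathrm{Res}_Y(G_i,G_j)$, which is nonzero by your Gauss-lemma coprimality argument and is in fact a factor of $\Delta_F$. This product is invariant under the Galois group, since the Galois group permutes the monic $G_i$, so it already lies in $\Q[\bX]$. Hence your detour through $\calO_K$ or norms is unnecessary.
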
 
A special case of this result (in which each $f_i$ is assumed to be homogeneous of degree $e \cdot i$, for a fixed positive integer $e$) is stated in \cite[Lemma 2.5]{BCSSV25x} without  proof; for clarity we provide a proof.
\begin{proof}

For such $F$, we can define a polynomial $\Delta_{F} (\bfX)\in\mathbb{Q}[\bfX]$ such that for every $\bfx\in \mathbb{Z}^{n}$, $\Delta_{F}(\bfx)=
\text{disc}(F(Y,\bfx))$. Hence,
\[
F(Y,\bfx)\text{ is  squarefree}\Longleftrightarrow \Delta_{F}(\bfx)\neq 0.
\]
(Precisely $\Delta_F$ is the discriminant of $F$ over $\mathbb{Q}(\bfX)$, i.e. $\text{Res}\left(F,\frac{\partial F}{\partial Y}\right)$ over $\mathbb{Q}(\bfX)$, and  $\deg \Delta_F(\bX) \ll_{n,\deg F} 1$ as in \cite[Ch. 13 Prop. 1.1]{GKZ08}.)
Under the hypothesis that $F(Y,\bX)$ is irreducible over $\Q$, we claim $\Delta_{F}(\bfX)\not\equiv 0$.  By the Hilbert Irreducibility Theorem (see e.g.  \cite[Ch. 9]{Lan83}), one can find $\bfx\in\mathbb{Z}^{n}$ such that $F(Y,\bfx)$ is irreducible over $\mathbb{Q}$, hence separable, and thus squarefree. For such $\bfx$, $\Delta_F(\bx) \neq 0$, and hence   $ \Delta_{F}(\bfX)\not\equiv 0$, as claimed.
    
    Assume furthermore $F$ is irreducible over $\mathbb{Q}$ but not over $\overline{\mathbb{Q}}$. Over $\overline{\mathbb{Q}}$ we can write
    \[
    F(Y,\bfX)=\prod_{i=1}^{\ell}G_{i}(Y,\bfX),
    \]
    with $\ell \geq 2$,
    where for every $i=1,...,\ell$ we have that  $G_i(Y,\bfX)\in\overline{\mathbb{Q}}[Y,\bfX]\setminus \mathbb{Q}[Y,\bfX]$, and $G_{i}$ irreducible over $\overline{\mathbb{Q}}$. Now assume $\bfx$ is such that $F(Y,\bfx)$ is solvable over $\mathbb{Z}$, so there exists $n \in \mathbb{Z}$ such that $F(n,\bfx)=0$. Since
    \[
    0=F(n,\bfx)=\prod_{i=1}^{\ell}G_{i}(n,\bfx),
    \]
    there exists $k\in\{1,...,\ell\}$ such that $G_{k}(n,\bfx)=0$. Since $G_{k}\not\in\mathbb{Q}[Y,\bfX]$, we can find $\tau\in\text{Gal}(\overline{\mathbb{Q}}/\mathbb{Q})$ such that $\tau (G_{k})\neq G_{k}$. On the other hand, since $\tau (F(Y,\bfX))=F(Y,\bfX)$ it follows that $\tau (G_{k})=G_{j}$ for some $j\in\{1,...,\ell\}\setminus\{k\}$. Hence, we have that
    \[
    0=\tau(G_{k}(n,\bfx))=G_{j}(n,\bfx).
    \]
    Since $n$ is a root of $G_{k}(Y,\bfx)$ and a root of $G_{j}(Y,\bfx)$, it follows that $n$ is a double root of $F(Y,\bfx)$, so that $\Delta_{F}(\bfx)=0$. This shows  that for every $\bfx\in\mathbb{Z}^{n}$:
    \[
    F(Y,\bfx)=0\text{ is solvable over }\mathbb{Z}\Rightarrow \Delta_{F}(\bfx)=0.
    \]
    Hence by the Schwartz-Zippel bound (Lemma \ref{lemma_Schwartz_Zippel}),
    \[
    N^\cov_{\Abb^n} (F,B)\leq\#\{\bfx\in\mathbb{Z}^{n}:\|\bfx\|\leq B,\quad \Delta_{F}(\bfx)=0\}\ll_{n,\deg F} B^{n-1},
    \]
    as claimed.
\end{proof}

\subsubsection{Progress via sieve methods} Several works have proved bounds,   for polynomials $F(Y,\bX)$ of special shape, which approach $N^\cov_{\Abb^n}(F,B) \ll_{F,\ep} B^{n-1+\ep}$ asymptotically as $n \maps \infty$. All of these works are united by the attribute that they used sieve methods: the power sieve, various types of polynomial sieve, and more complicated variants.  For the special shape 
\beq\label{Mun_shape} 
F(Y,\bX)=Y^r - G(\bX)
\eeq
with $r \geq 2$ and $G$ a nonsingular homogeneous form with $\deg G=mr$ for some $m \geq 1$, Munshi proved $N^\cov_{\Abb^n}(F,B) \ll_{F,\ep} B^{n-1+1/n} (\log B)^{\frac{n-1}{n}}$ \cite{Mun09}. (See \cite[Remark 1.1]{Bon21}, which improved the main exponent to $n-1+1/(n+1)$, and work of analogous strength over function fields by Bucur, Cojocaru, Lal\'in and Pierce \cite{BCLP23}.) Heath-Brown and Pierce \cite{HBPie12} improved this, showing that for $G(\bX)$ any polynomial  $d \geq 3$ with nonsingular leading form,  
\beq\label{HBP_result}
N^\cov_{\Abb^n}(F,B)\leq C(F) 
\begin{cases}
    B^{n-1+\frac{n(8-n)+4}{6n+4}}(\log B)^2, & 2 \leq n \leq 8 \\
    B^{n-1+ \frac{1}{2n+10}}(\log B)^2, & n=9\\
    B^{n-1 - \frac{(n-10)}{2n+10}}(\log B)^2, & n \geq 10.
\end{cases}
\eeq
Note that for $n \geq 10$, this upper bound takes the form $N^{\cov}_{\Abb^n}(F,B) \leq C(F) B^{n-1 - \Delta_n}(\log B)^2$ for $\Delta_n \maps 1/2$ as $n \maps \infty$. 
Consider next the shape 
\beq\label{Bon_shape} 
F(Y,\bX) = g(Y) + G(\bX)
\eeq
for $g$ any polynomial of degree $r \geq 2$ in $\Z[Y]$ and  any irreducible form $G \in \Z[X_1,\ldots, X_n]$ of degree $e \geq 2$ such that the projective hypersurface $V(F)$ defined by $F=0$ is nonsingular over $\C$. For such $F$, Bonolis obtained $N^\cov_{\Abb^n}(F,B)\ll_F B^{n-1 + \frac{1}{n+1}}(\log B)^{\frac{n}{n+1}}$ \cite{Bon21}. 
Consider a yet more general setting that does not separate the variables $Y$ and $\bX$: fix an integer $m \geq 2$ and integers $d, e \geq 1$, and let
\beq\label{BonPie_shape}
F(Y,\bX)=Y^{md}+Y^{m(d-1)}f_{1}(\bX)+\ldots+Y^mf_{d-1}(\bX)+f_{d}(\bX),
\eeq
in which for each $1 \leq i \leq d$,  $f_i \in \Z[X_1,\ldots, X_n]$ is a homogeneous form with $\deg f_{i}=m\cdot e\cdot i$, and $f_d \not\con 0$. Under the hypothesis that  the weighted hypersurface $V(F(Y,\bX)) \subset \mathbb{P}(e,1,\ldots,1)$ defined by $F(Y,\bX)=0$ is nonsingular over $\C$, Bonolis and Pierce proved 
\beq\label{BonPie_result}
N^\cov_{\Abb^n}(F,B) \ll_{n,m,d,e}(\log (\|F\|+2))^{e(n)} B^{n-1 + \frac{1}{n+1}} (\log B)^{\frac{n}{n+1}}
\eeq 
for some integer $e(n) \geq 1$ \cite{BonPie24}. If $m=1$, that work obtained the same result, conditional on GRH. (For clarity, note that \cite{BonPie24} stated this result with an implicit constant depending only on $n,m,d,e$ but independent of the maximum coefficient $\|F\|$, but this was corrected in \cite{BonPie24cor} to state that the implicit constant may depend (at most) polylogarithmically on $\|F\|$; thus  (\ref{BonPie_result}) corrects \cite[Thm. 1.1]{BonPie24}.) Most recently, in \cite{BPW25x} the authors of the present paper improved this result by removing the nonsingularity hypothesis: instead, the polynomial $F(Y,\bX)$ need only be absolutely irreducible and satisfy an appropriate ``nondegeneracy hypothesis'' that assures that $F(Y,\bX)$ depends nontrivially on each of $X_1,\ldots,X_n$ after any linear change of variables; we provide details in \S \ref{sec_revisit_affineII}.

As a consequence of Proposition \ref{prop_BCSSV_affineII_projII}, each of the results   $N^\cov_{\Abb^n}(F,B)\ll B^{\al(n)}s(B)$ listed above implies a corresponding result $N_{\Pbb^{n-1}}(M,B) \leq C(M)[B^{n-1} + B^{\al(n)}s(B)]$ for any projective thin set $M$ of type II whose associated polynomial $F$, in the sense of Proposition \ref{prop_BCSSV_affineII_projII}, is of the relevant shape (\ref{Mun_shape}), (\ref{Bon_shape}), or (\ref{BonPie_shape}).

\subsubsection{Affine result to resolve Conjecture \ref{conj_Serre_projII}}
Buggenhout, Cluckers, Salberger, Santens and Vermeulen \cite{BCSSV25x} have recently announced  the resolution of Serre's Conjecture \ref{conj_Serre_projII} in nearly all cases. This is deduced by passing to the affine setting via Proposition \ref{prop_BCSSV_affineII_projII}, and considering any polynomial of the following form. For integers $n \geq 1$, $d \geq 2$, $e\geq 1$, let
\[F_{\mathrm{top}}(Y,\bX) = Y^d +Y^{d-1}f_1(\bX) + \cdots + Yf_{d-1}(\bX) + f_d(\bX)\]
be an absolutely irreducible polynomial,
in which each $f_i$ is homogeneous of degree $e \cdot i$, so that $F_{\mathrm{top}}(Y^e,\bX)$ is homogeneous of degree $de$. Then consider any polynomial of the form
\beq\label{BCSSV_class}
F(Y,\bX) =F_{\mathrm{top}}(Y,\bX) + F_0(Y,\bX)
\eeq
in which $F_0(Y^e,\bX)$ has total degree strictly smaller than $de$. For any   polynomial $F$ of type (\ref{BCSSV_class}), \cite[Thm. 1.2]{BCSSV25x} proves that for some absolute constant $\ga$, and for some $\kappa (n,d)$,
\beq\label{BCSSV_affine} N^\cov_{\Abb^n} (F,B) \ll_{n,d} \|F\|^{\kappa(n,d)} \begin{cases} B^{n-1} & \text{if $d \geq 5$}\\
B^{n-1} (\log B)^{\ga} & \text{if $d =4$}\\
B^{n-1+(2/\sqrt{d}-1)}(\log B)^{\ga} & \text{if $d =2,3$.}
\end{cases}
\eeq

 \subsection{Type II, revisiting the projective setting}\label{sec_projII_revisit}
 
 The work of  Buggenhout, Cluckers, Salberger, Santens and Vermeulen \cite[Thm. 1.1]{BCSSV25x} resolves Serre's question on  counting points in  projective thin sets of type II (Conjecture \ref{conj_Serre_projII}), in nearly all cases:

 \begin{thm}[BCSSV]\label{thm_BCSSV}
Let $M \subset \mathbb{P}^{n-1}(\Q)$ be a thin set of type II, with corresponding integral variety $X$ and  morphism  $\pi: X \maps \Pbb^{n-1}$ that is defined over $\Q$, quasi-finite, dominant and of degree $d\geq2$.
 Then  for some absolute constant $\ga$,
\[ N_{\Pbb^{n-1}}(M,B) \ll_{\pi} \begin{cases} B^{n-1} & \text{if $d \geq 5$}\\
B^{n-1} (\log B)^{\ga} & \text{if $d =4$}\\
B^{n-1+(2/\sqrt{d}-1)}(\log B)^{\ga} & \text{if $d =2,3$.}
\end{cases}
\]
\end{thm}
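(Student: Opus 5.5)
The plan is to deduce Theorem \ref{thm_BCSSV} from the affine bound (\ref{BCSSV_affine}) via the bridge in Proposition \ref{prop_BCSSV_affineII_projII}, after a few reductions.

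\textbf{Reductions.} First, apply Lemma \ref{lemma_quasi_finite}, or the quasi-finiteness hypothesis directly, to assume $\pi$ is quasi-finite on an open dense subset. The discarded points are then covered by a proper closed subset of $\Pbb^{n-1}$, and by Lemma \ref{lemma_projI_trivial_bound} they contribute $O_\pi(B^{n-1})$. This is acceptable in every case of the theorem, since all the stated bounds are at least $B^{n-1}$ (for $d=2,3$ the exponent $2/\sqrt{d}-1$ is positive). Proposition \ref{prop_BCSSV_affineII_projII} then produces a polynomial $F$ of shape (\ref{BCSSV_affineII_projII_F_shape}), monic of degree $d$ in $Y$ with each $f_i$ homogeneous of degree $ei$. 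It satisfies
\[ N_{\Pbb^{n-1}}(M,B) \leq N^\cov_{\Abb^n}(F,B) + O_\pi(B^{n-1}).\]
Since $d=\deg\pi$, $\deg_Y F = d$.

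\textbf{Absolute irreducibility.} Next I would reduce to $F$ absolutely irreducible. The polynomial $F$ is irreducible over $\Q$ because it is a minimal polynomial of a generator of the function field. If it is not absolutely irreducible, Lemma \ref{lemma_typeII_geometrically_integral} gives $N^\cov_{\Abb^n}(F,B)\ll_{n,\deg F} B^{n-1}$, which is enough. So we may assume $F$ is absolutely irreducible. Then $F=F_{\mathrm{top}}$ with $F_0=0$, and $F$ belongs to the class (\ref{BCSSV_class}).

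\textbf{Conclusion.} Applying (\ref{BCSSV_affine}) gives the bound $\|F\|^{\kappa(n,d)}$ times the displayed case-dependent power of $B$. Since $F$ is determined by $\pi$, both $\|F\|^{\kappa(n,d)}$ and $\deg F = de$ depend only on $\pi$, so they are absorbed into the implied constant $\ll_\pi$. Adding the $O_\pi(B^{n-1})$ error terms gives the theorem in each of the three cases $d\ge5$, $d=4$, and $d=2,3$.

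\textbf{Main obstacle.} All the depth lies in the affine input (\ref{BCSSV_affine}), which we are allowed to assume, and in the construction of Proposition \ref{prop_BCSSV_affineII_projII}. The point I would check most carefully is that the $Y$-degree of the associated $F$ equals $\deg\pi$ rather than a multiple of it, so that the case split is indexed by the same $d$. This should follow from choosing the generator $t$ of the degree-$d$ function field extension, homogenized by weight $e$. One should also confirm that the case split in (\ref{BCSSV_affine}) is by $\deg_Y F$.
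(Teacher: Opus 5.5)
Your proposal is correct and follows the paper's own deduction. You reduce to the quasi-finite case via Lemma \ref{lemma_quasi_finite}, pass to $N^\cov_{\Abb^n}(F,B)$ via Proposition \ref{prop_BCSSV_affineII_projII} at a cost of $O_\pi(B^{n-1})$, handle non-absolutely-irreducible $F$ with Lemma \ref{lemma_typeII_geometrically_integral}, apply (\ref{BCSSV_affine}) to $F=F_{\mathrm{top}}$, and absorb $\|F\|^{\kappa(n,d)}$ into the $\pi$-dependent constant. Your worry about $\deg_Y F$ is settled by the statement of Proposition \ref{prop_BCSSV_affineII_projII}, which produces $F$ with $\deg_Y F = d = \deg \pi$, so the case splits match.
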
 
 This is deduced by passing  to the affine setting via Proposition \ref{prop_BCSSV_affineII_projII}, and finally applying   (\ref{BCSSV_affine}). 
By Lemma \ref{lemma_quasi_finite}, it suffices to consider the quasi-finite setting, and so the bound in Theorem \ref{thm_BCSSV} holds for any thin set of type II.
It remains an open question to achieve the exponent of Conjecture \ref{conj_Serre_projII} for $d=2,3$, and further understand the dependence of the implicit constant on $\pi$.

  \begin{rem}[Inexplicit bounds]\label{remark_BCSSSV_inexplicit}
The $O_\pi(B^{n-1})$ term in the conclusion of Lemma \ref{lemma_quasi_finite} has inexplicit dependence on $\pi$. This arises because the construction of the proper closed subset $W \subset \Pbb^{n-1}$ in the proof of that lemma has inexplicit degree. (See also \cite[Thm. 1.1]{BCSSV25x}, which alternatively uses Zariski's Main Theorem in order to reduce to consideration of finite covers, also leading to an inexplicit conclusion.) Similarly, the degree of $V$ in Proposition \ref{prop_BCSSV_affineII_projII} is inexplicit, leading to an $O_\pi(B^{n-1})$ term in that conclusion. Thus, even though the affine result (\ref{BCSSV_affine}) explicitly controls dependence on $\|F\|$, the application in Theorem \ref{thm_BCSSV} has inexplicit dependence on $\pi$. 
    
\end{rem}
   
 \subsection{Type II, revisiting the affine setting}\label{sec_revisit_affineII}
By  Example \ref{example_affineII_lower_bound}, in general the baseline affine upper bound (\ref{affineII_baseline}) cannot   be essentially improved in general. However, parallel to the type I setting of \S \ref{sec_revisit_affineI}, it is reasonable to ask   for the largest class of affine thin sets of type II for which the baseline   upper bound can be improved to exponent $n-1$, or analogously, the largest class of ``allowable'' polynomials $F(Y,\bX)$ for which one can show 
\beq\label{BPW_goal} 
N^\cov_{\Abb^n}(F,B)\leq C(F) B^{n-1}s(B)
\eeq 
for a constant $C(F)$ and small factor $s(B)$. By (\ref{BCSSV_affine}), any polynomial of the shape (\ref{BCSSV_class}) belongs to this class, for $d \geq 4$.
Moreover, the results in (\ref{HBP_result}) suggest it is an interesting question to characterize affine thin sets of type II for which an even better upper bound holds. Finally, it is also interesting  to study uniformity questions related to the factor $C(F)$. 

Our recent work \cite{BPW25x} advanced  the first line of inquiry. There, we described a large class of  polynomials $F(Y,\bX) \in \Z[Y,X_1,\ldots, X_n]$, for which
\beq\label{BPW_strength} 
N^\cov_{\Abb^n}(F,B) \ll_\ep C(F)B^{n-1+\delta_n+\ep}, 
\eeq
for  $\delta_n=1/(n+1)$, so that this result approaches (\ref{BPW_goal}) as $n \maps \infty$.  Let us state what makes a polynomial ``allowable'' more precisely, using the terminology of \cite{BPW25x}. First,  consider absolutely irreducible polynomials of the form 
\beq\label{BPW_class}
F(Y,\bX) =Y^{md} +Y^{m(d-1)}f_1(\bX) + \cdots + Y^m f_{d-1}(\bX)+f_d(\bX),
\eeq 
for some integers $m,d \geq 1$ with $md \geq 2$.   No homogeneity condition or degree condition is placed on the polynomials $f_j$, so that this class is larger than the class (\ref{BCSSV_class}).
 
Let $L$ be a finite (nontrivial) extension of $\Q(X_1,...,X_n)=\Q(\bX)$. We say   $L$ is an  $n$-genuine extension of $\Q(\bX)$ if for every $G(Y,\bX)\in \Z[Y,\bX]$ such that $L = \Q(\bX)[Y]/(G(Y,\bX)),$
$G(Y,\bX)$ has nonzero degree in each of $X_1,\ldots,X_n$. 
We say an absolutely irreducible polynomial $F(Y,\bX)\in \Z[Y,\bX]$ that is monic in $Y$ and has $\deg_Y F \geq 2$ is  \emph{$(1,n)$-allowable} if for every linear change of variables $\sigma\in \GL_n(\Q)$, the polynomial  $F_\sigma(Y,X_1,...,X_n) := F(Y,\sigma(\bX))$ has the property that $\Q(\bX)[Y]/(F_\sig(Y,\bX))$ is an $n$-genuine extension.  
For every polynomial $F$ in the class (\ref{BPW_class}) for $m \geq 2$ that is $(1,n)$-allowable, \cite[Thm. 1.2]{BPW25x} proves (\ref{BPW_strength}) with $C(F)\ll_{n,m,\deg F}  \|F\|^{\kappa(n)}$ depending at most polynomially on $\|F\|$. (If $m=1$, the same result holds if GRH is assumed.)
For each fixed $m \geq 1$, $(1,n)$-allowable polynomials are generic among polynomials of the form (\ref{BPW_class}), by \cite[Cor. 3.2]{BPW25x}.

The dependence on $\|F\|$ can be reduced if a stronger nondegeneracy assumption is made. 
 Precisely, we say that a finite (nontrivial) extension $L$ of $\Q(X_1,...,X_n)=\Q(\bX)$ is a strongly $n$-genuine extension of $\Q(\bX)$ if for all subextensions $L'$ satisfying  $\Q(\bX) \subsetneq L' \subset L,$
    $L'$ is an $n$-genuine extension. 
We say that an absolutely irreducible polynomial $F(Y,\bX) \in \Z[Y,\bX]$ that is monic in $Y$ and has $\deg_Y F \geq 2$ is   \emph{strongly $(1,n)$-allowable} if  for every linear change of variables $\sigma\in \GL_n(\Q)$, the polynomial   $F_\sigma(Y,X_1,...,X_n) := F(Y,\sigma(\bX))$ has the property that $\Q(\bX)[Y]/(F_\sig(Y,\bX))$ is a strongly $n$-genuine extension.  
For $F$ in the class (\ref{BPW_class}) for $m \geq 2$ that is strongly $(1,n)$-allowable,  \cite[Thm. 1.3]{BPW25x} proves (\ref{BPW_strength}) with $C(F)\ll_{n,m,\deg F} (\log (\|F\|+2))^{\kappa(n)}$. (If $m=1$, the same result holds if GRH is assumed.)  This relies on uniformity results for trace functions recently obtained in \cite{BKW25x}. For each fixed $m \geq 1$, strongly $(1,n)$-allowable polynomials are generic among polynomials of the form (\ref{BPW_class}), by \cite[Cor. 3.2]{BPW25x}.

  By different methods, for any polynomial $F$ that is $(1,n)$-allowable but not strongly $(1,n)$-allowable, \cite[Thm. 1.4]{BPW25x} attains the stronger bound  (\ref{BPW_goal}), but now with $C(F)$ depending on $\|F\|$ in an unspecified way.  It remains an interesting open problem to characterize those polynomials for which (\ref{BPW_goal}) holds.

\begin{rem}
  We remark on an underlying similarity between the notion of nondegeneracy encapsulated by  $(1,n)$-allowability here, and the notion  in \S \ref{sec_revisit_affineI} of NCC (not cylindrical over a curve) for a polynomial $f$. Here, requiring a polynomial $F(Y,\bX)$ to be $(1,n)$-allowable excludes the possibility that a $\GL_n(\Q)$ change of variables reduces the number of variables; compare to Remark \ref{remark_NCC}.
  \end{rem}
This completes our survey of recent results on counting  points in thin sets.

\section{Revisiting Serre's large sieve argument}\label{sec_Serre}

 In the remainder of the paper, we focus on questions of dependence and uniformity, in the setting of affine thin sets of type II.
 In this section, we show that Serre's original large sieve argument   to prove the baseline bound in Theorem \ref{thm_baseline_affine}, in particular for affine thin sets of type II, can be refined to control the dependence of $C(M)$, in some cases. Additionally, we  clarify the role of Dedekind zeta functions and GRH, building on \cite[\S3.2]{Ser97}. Our main result in this section is:
\begin{thm} \label{thm_Serre}
Let $n \geq 2$ be fixed.
Let $F(Y,X_1,\ldots,X_n) \in \Z[Y,X_1,\ldots,X_n]$ of total degree $D$ be absolutely irreducible   and of the form 
\beq\label{F_dfn_Serre}
F(Y,X_1,...,X_n) = Y^{md} + Y^{m(d-1)} f_1(\bX) +\cdots + Y^mf_{d-1}(\bX)+ f_d(\bX),
\eeq
for a given integer $m \geq 2$.  
Then for some $c(n) \geq 1$,
\[N^\cov_{\Abb^n}(F,B)\ll_{n,D}  (\log (\|F\|+2))^{c(n)} B^{n-1/2}\log (B+2) \qquad \text{for all $B \geq 1$.}
\]
   If $m=1$ and $\deg_Y F \geq 2$, the same result holds, conditional on GRH for a single Dedekind zeta function. 
\end{thm}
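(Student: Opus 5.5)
The plan is to run Serre's large sieve argument while keeping track of exactly where $\|F\|$ enters. Recall the large sieve in the form used by Serre: if $\mathcal{S} \subset [-B,B]^n \cap \Z^n$ avoids, for each prime $p$ in a set $\mathcal{P}$, a set of $\omega(p) p^n$ residue classes modulo $p$, then for any $Q \geq 1$
\[ \#\mathcal{S} \ll_n \frac{B^n + Q^{2n}}{L(Q)}, \qquad L(Q) \geq \sum_{\substack{p \leq Q \\ p \in \mathcal{P}}} \frac{\omega(p)}{1-\omega(p)}. \]
We apply this with $\mathcal{S}$ the set counted by $N^\cov_{\Abb^n}(F,B)$ and $Q = B^{1/(2n)}$, the largest choice that keeps $Q^{2n} \leq B^n$. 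It then suffices to find a set $\mathcal{P}$ of primes $p \leq Q$, with $\#\mathcal{P} \gg Q/\log Q$, such that $\omega(p) \geq c_0 > 0$ for every $p \in \mathcal{P}$. Here $c_0$ depends only on $n,D$, and the required lower bound on $\#\mathcal{P}$ must hold once $Q \geq (\log(\|F\|+2))^{C}$ for some $C=C(n,D)$. If instead $Q < (\log(\|F\|+2))^{C}$, the trivial bound $N^\cov_{\Abb^n}(F,B) \ll B^n$ is already acceptable after adjusting $c(n)$. Note that with $L(Q) \gg Q/\log Q$, this choice of $Q$ yields $B^{n-1/(2n)}\log B$. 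Reaching the stated $B^{n-1/2}\log(B+2)$ requires the weighted form of the large sieve with sifting level $Q = B^{1/2}$ in each coordinate, in which the denominator runs over squarefree moduli $q \leq Q$ with $L(Q) \gg Q/\log Q$. This is the version Serre uses, and the prime counting below feeds into it unchanged.

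\emph{Local density for $m \geq 2$.} Write $F(Y,\bX) = G(Y^m,\bX)$ with $G(Z,\bX) = Z^d + f_1(\bX)Z^{d-1}+\cdots+f_d(\bX)$. Take $\mathcal{P}$ to be the primes $p \equiv 1 \pmod m$, $p \gg_D 1$, at which $F$ stays absolutely irreducible modulo $p$. By a Bertini--Noether lemma (as in the Remark following Lemma \ref{lemma_affineI_trivial_bound}), at most $\ll_{n,D} \log\|F\|$ primes are excluded. The Siegel--Walfisz theorem for the fixed modulus $m$ then gives $\#\mathcal{P}\cap[1,Q] \gg_m Q/\log Q$ once $Q \geq (\log(\|F\|+2))^{2}$, uniformly in $F$; this is the only place $\|F\|$ enters. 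For $p \in \mathcal{P}$, the residue $\bx$ is covered modulo $p$ only if some root $z \in \F_p$ of $G(Z,\bx)$ is $0$ or an $m$-th power. Using $1_{z \in (\F_p^\times)^m} = \frac1m\sum_{\chi^m=1}\chi(z)$, we get
\[ \#\{\bx \bmod p \text{ covered}\} \leq \#\{\bx: f_d(\bx)=0\} + \frac{1}{m}\sum_{\chi^m = 1}\ \sum_{(z,\bx) \in W(\F_p)} \chi(z), \]
where $W=\{G(z,\bx)=0\}$. The trivial character contributes $\frac1m \#W(\F_p) = \frac1m p^n + O_{n,D}(p^{n-1/2})$ by Lang--Weil, since $W$ is absolutely irreducible modulo $p$ (this follows from the absolute irreducibility of $F$). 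For a nontrivial $\chi$, the sum is a multiplicative character sum of $Z$ over $W$. It is $O_{n,D}(p^{n-1/2})$ (Deligne/Katz, with constants depending only on degrees) unless $Z$ is, up to constants, a $\chi$-order power in the function field of $W$ over $\overline{\F}_p$. In that case $G(Y^m,\bX)$ would factor over $\overline{\F}_p$, contradicting absolute irreducibility of $F$ modulo $p$. The term $\#\{f_d=0\}$ is $O(p^{n-1})$ by Lemma \ref{lemma_Schwartz_Zippel}. Hence $1-\omega(p) \leq \frac1m + O(p^{-1/2}) \leq \frac23$ for $p \gg_{n,D}1$, so $\omega(p)\ge 1/3$.

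\emph{The case $m=1$.} Here we replace the above with Jordan's lemma. Let $G_{\mathrm{geom}} \trianglelefteq G_{\mathrm{arith}}$ be the Galois groups of $F$ over $\overline{\Q}(\bX)$ and $\Q(\bX)$, and let $K$ be the constant field extension; $[K:\Q]\ll_D 1$. For primes $p$ splitting completely in $K$ and of good reduction, Chebotarev's theorem over $\F_p(\bX)$ (via Lang--Weil for the Galois closure) shows that the proportion of $\bx$ for which Frobenius fixes no root is $\geq 1/|G_{\mathrm{geom}}| - O(p^{-1/2})$. This proportion is positive by Jordan's lemma applied to the transitive action of degree $\deg_Y F \geq 2$. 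Counting primes $p \leq Q$ that split completely in $K$ with the required uniformity in $\disc K$, which is $\ll \|F\|^{O(1)}$, is exactly what GRH for $\zeta_K$ provides: under GRH the effective Chebotarev theorem gives $\gg Q/\log Q$ such primes for $Q \geq (\log \|F\|)^{C}$. Unconditionally, Siegel zeros would force $Q$ to be a power of $\|F\|$.

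I expect the main obstacle to be the uniform character-sum step for $m\geq2$: ruling out the degenerate case (that $Z$ is a power on $W$ over $\overline{\F}_p$) for all but $O(\log\|F\|)$ primes, and making sure the Deligne--Katz constants depend only on $n,D$. I would handle the degenerate case by reducing it to a factorization of $F$ modulo $p$, which Bertini--Noether controls.
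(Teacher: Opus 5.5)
Your proposal is correct and has the same overall structure as the paper's proof. That structure is:
\begin{itemize}
\item Serre's large sieve with $Q=B^{1/2}$;
\item a lower bound $L(Q)\gg Q/\log Q$ from primes $p\equiv 1 \pmod m$ lying outside a Bertini--Noether exceptional set of size $O(\log\|F\|)$, counted by Siegel--Walfisz once $Q$ exceeds a power of $\log\|F\|$;
\item the trivial bound for small $B$;
\item for $m=1$, Serre's split-prime/Jordan argument, with GRH-Chebotarev for $\zeta_K$ and $\log|\mathrm{Disc}\,K|\ll\log\|F\|$.
\end{itemize}

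One correction to your setup: no separate ``weighted'' version of the sieve is needed. The inequality you wrote already suffices, because $Q^{2n}\le B^n$ holds exactly for $Q\le B^{1/2}$, not $Q\le B^{1/(2n)}$ (note $(B^{1/(2n)})^{2n}=B$). So $Q=B^{1/2}$ gives $B^n/L(Q)\ll B^{n-1/2}\log B$ directly.

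Where you genuinely differ is the local density estimate for $m\ge 2$. You expand the $m$-th power indicator in characters and bound each nontrivial character sum over $W=\{G(z,\bx)=0\}$ via Deligne--Katz. You rule out the degenerate Kummer case using absolute irreducibility of $F$ modulo $p$. This works, but the paper avoids all of it with an elementary orbit argument. If $f_d(\bx)\neq 0$ and $y\in\F_p$ is a root of $F(Y,\bx)$, then $y\neq 0$. With $\gamma$ a primitive $m$-th root of unity in $\F_p$, the values $y,\gamma y,\dots,\gamma^{m-1}y$ are $m$ distinct roots. Hence, writing $M_p(F)=\#\{(y,\bx)\in\F_p^{n+1}: F(y,\bx)=0\}$, the number of such covered $\bx$ is at most $\frac1m M_p(F)=\frac1m\bigl(p^n+O_{n,D}(p^{n-1/2})\bigr)$. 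This uses Lang--Weil only once, on the absolutely irreducible hypersurface $F=0$ in $\Abb^{n+1}$.

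Your character sums are in fact this same object. Summed over all $\chi$ with $\chi^m=1$ (with the convention $\chi(0)=0$), they add up to exactly $\#\{(y,\bx)\in\F_p^{n+1}: F(y,\bx)=0,\ y\neq 0\}\le M_p(F)$. So the sieve never needs the nontrivial characters separately. The obstacle you anticipate (uniform bounds for individual character sums, and excluding the case where $z$ is a power on $W$) disappears. Your route would yield finer information, namely how the roots $z$ distribute among $m$-th power classes, but only the total enters $\omega_p$.
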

 Serre's original argument does not specify the dependence on $\|F\|$, but it improves the $\log B$ factor to $(\log B)^{\ga(F)}$ for some $\ga(F)<1$.  Including those additional steps from Serre's argument with our presentation of the $m=1$ case  will lead to this same log factor in Theorem \ref{thm_Serre} (see \S \ref{sec_m1} and Remark \ref{remark_refined_log}).

The proof relies on both the large sieve and the Lang-Weil bound, which we recall in the affine form stated in \cite[Lemma 2.4]{BPW25x}: 
\begin{lem}\label{lemma_Lang_Weil_cor}
Let $F\in \Z[X_1,\ldots,X_k]$ have degree $d$ (not necessarily homogeneous)  and let $p$ be a prime such that the affine variety $\{F(\bX)=0\}\subset \mathbb{A}^k(\F_p)$ of dimension $k-1$ is irreducible over $\overline{\F}_p$. Then 
\[ \# \{(x_1,\ldots,x_k) \in \F_p^k: F(\bx)=0\} = p^{k-1} + O_{k,d}(p^{k-1-1/2}).\]
\end{lem}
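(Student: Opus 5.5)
The plan is to deduce the lemma from the classical projective Lang--Weil estimate, by passing to the projective closure and discarding the points at infinity with a trivial bound. Let $\bar F \in \F_p[X_1,\ldots,X_k]$ be the reduction of $F$ modulo $p$, and let $d' \leq d$ be its degree. By hypothesis the affine variety $V=\{\bar F=0\}\subset \Abb^k_{\F_p}$ is irreducible over $\overline{\F}_p$ of dimension $k-1$, so $\bar F$ is nonconstant and $d'\geq 1$. Since the bound we aim for has constants depending only on $k,d$, we may take the maximum of the resulting constants over $1\leq d'\leq d$. Thus we can work throughout with $\bar F$ of degree $d'$.

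First, homogenize: let $\bar F^h(X_0,X_1,\ldots,X_k)$ be the homogenization of $\bar F$, and let $\overline{V}=\{\bar F^h=0\}\subset \Pbb^k$ be the projective closure of $V$. This is a hypersurface of degree $d'$ and dimension $k-1$. Since $V$ is irreducible over $\overline{\F}_p$ and dense in $\overline V$, the closure $\overline{V}$ is also geometrically irreducible; equivalently, any factorization of $\bar F^h$ over $\overline{\F}_p$ would dehomogenize to a factorization of $\bar F$, because $X_0\nmid \bar F^h$. Second, apply the Lang--Weil theorem to the geometrically irreducible projective variety $\overline{V}\subset\Pbb^k$ of dimension $r=k-1$ and degree $d'$:
\[ \#\overline{V}(\F_p) = p^{k-1} + O_{k,d'}(p^{k-1-1/2}). \]
Here I would cite Lang--Weil in its original form, with error $(d'-1)(d'-2)p^{r-1/2}+A(k,r,d')p^{r-1}$. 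The essential point is that the implied constant depends only on $k$ and $d'$, and not on $p$ or on the coefficients of $F$.

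Third, remove the points at infinity. We have $\overline V(\F_p)\setminus V(\F_p) = \{X_0=0,\ \bar F_{d'}(X_1,\ldots,X_k)=0\}$, where $\bar F_{d'}$ is the nonzero leading form of $\bar F$. This is a hypersurface in $\Pbb^{k-1}$, and by the trivial bound (Lemma \ref{lemma_projI_trivial_bound}, or directly via Schwartz--Zippel, Lemma \ref{lemma_Schwartz_Zippel}, applied to affine charts over $\F_p$) it has $O_{k,d'}(p^{k-2})$ points. Subtracting this from the projective count gives
\[ \#V(\F_p) = p^{k-1}+O_{k,d}(p^{k-3/2})+O_{k,d}(p^{k-2}) = p^{k-1}+O_{k,d}(p^{k-3/2}). \]

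The main obstacle is ensuring that the Lang--Weil constant is uniform, that is, depends only on $k$ and $d$. This is exactly what the original Lang--Weil statement provides. The remaining steps are then routine, provided one keeps track of the possible drop in degree under reduction modulo $p$, which is handled by the maximum over $d'\leq d$ described above.
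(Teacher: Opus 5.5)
Your proof is correct. The paper gives no proof of its own here; it simply imports this affine form of Lang--Weil from \cite{BPW25x}, and your route (projective closure, the original Lang--Weil bound with constants depending only on $k$ and the degree, then discarding the $O_{k,d}(p^{k-2})$ points on $X_0=0$) is the standard deduction behind such a statement.

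Two small precision points. First, the hypothesis concerns the variety rather than the polynomial, so $\bar F$ may equal $cG^e$ with $e>1$. In that case $\overline{V}=\{G^h=0\}$ has degree $\deg G\le d'$, which your set-theoretic closure argument and your maximum over degrees already absorb. Second, the count at infinity should be done with Lemma \ref{lemma_Schwartz_Zippel} over $\F_p$ on affine charts, as you also suggest. Lemma \ref{lemma_projI_trivial_bound} counts points of bounded height over $\Q$, not $\F_p$-points.
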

We also require a standard consequence of Noether's Lemma, which we quote from \cite[Lemma 2.9]{BPW25x}.
\begin{lem}\label{lemma_Bertini_Noether}
Let $F(\bX) \in \Z[X_1,\ldots,X_n]$ have degree  $D \geq 1$. If $F$ is  irreducible over $\overline{\Q}$, then there exists a finite set $\mathcal{E}$ of exceptional primes with $|\mathcal{E}| \ll_{n,D} \log \|F\|/ \log \log \|F\|$  such that for $p \not\in \mathcal{E}$, the reduction of $F$ modulo $p$ is again of degree $D$ and is irreducible over $\overline{\F}_p$. 
\end{lem}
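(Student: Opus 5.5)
The plan is to deduce Lemma \ref{lemma_Bertini_Noether} from Noether's theorem on absolute irreducibility, using the explicit, characteristic-free form given by Ruppert or Schmidt. Let $\mathbf{a}$ be the tuple of coefficients of a generic polynomial of degree at most $D$ in $n$ variables. Noether's theorem supplies finitely many polynomials $\Phi_1,\ldots,\Phi_r \in \Z[\mathbf{a}]$ (the Noether forms), with $r$, their degrees and the sizes of their integer coefficients all bounded in terms of $n,D$. They have the following property: over any field $K$, a polynomial $G\in K[\bX]$ of exact degree $D$ is irreducible over $\overline{K}$ if and only if $\Phi_i(\mathbf{a}(G))\neq 0$ for some $i$. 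The essential features are that the $\Phi_i$ have integer coefficients and that the criterion is uniform in the characteristic. Both are needed to pass from $\overline{\Q}$ to $\overline{\F}_p$.

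Apply this to the given $F$. Since $F$ is irreducible over $\overline{\Q}$ and has degree $D$, there is an index $i$ with $N:=\Phi_i(\mathbf{a}(F))\in\Z\setminus\{0\}$. Bounding each monomial of $\Phi_i$ trivially gives $|N|\ll_{n,D}\|F\|^{\deg \Phi_i}\le \|F\|^{k(n,D)}$ for some $k(n,D)$ (taking $\|F\|\ge 2$, say). Separately, fix a coefficient $c\neq 0$ of a monomial of degree exactly $D$ in $F$, so that $|c|\le\|F\|$.

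Now take any prime $p\nmid cN$. Since $p\nmid c$, the reduction $\bar F$ of $F$ modulo $p$ still has degree $D$. Because $\Phi_i$ has integer coefficients, $\Phi_i(\mathbf{a}(\bar F)) \equiv N \not\equiv 0 \pmod p$. Noether's criterion over $K=\F_p$ therefore shows that $\bar F$ is irreducible over $\overline{\F}_p$.

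It follows that the exceptional set $\mathcal{E}$ can be taken to be the set of primes dividing $cN$, which is a nonzero integer of size at most $\ll_{n,D}\|F\|^{k(n,D)+1}$. The classical bound $\omega(m)\ll \log m/\log\log m$ for the number of distinct prime factors of an integer $m\ge 3$ then gives
\[ |\mathcal{E}| \ll_{n,D} \frac{\log \|F\|}{\log\log\|F\|} \]
once $\|F\|$ exceeds a constant depending on $n,D$. For smaller $\|F\|$, the integer $|cN|$ is $O_{n,D}(1)$, so $|\mathcal{E}|\ll_{n,D}1$, and this is absorbed by the usual convention that $\log\|F\|/\log\log\|F\|$ is read as $\gg 1$.

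I expect the only real obstacle to be the input itself: an effective version of Noether's theorem that is valid in every characteristic, with forms having integer coefficients and degrees bounded in terms of $n,D$ alone. I would cite this rather than reprove it (Ruppert; Schmidt's \emph{Equations over finite fields}, Ch. V; or Kaltofen). Some care is needed over the degree normalization. Noether's criterion is usually stated for polynomials of exact degree $D$, or for their homogenizations, and the separate condition $p\nmid c$ is what guarantees that this hypothesis survives reduction modulo $p$.
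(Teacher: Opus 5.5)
Your proposal is correct and matches the paper's approach. The paper does not reprove this lemma; it quotes it from \cite[Lemma 2.9]{BPW25x} as ``a standard consequence of Noether's Lemma'', and your argument is that standard deduction: take $\mathcal{E}$ to be the primes dividing a top-degree coefficient times a nonzero value $\Phi_i(\mathbf{a}(F)) \in \Z$ of a characteristic-free Noether form, then apply $\omega(m) \ll \log m/\log\log m$ to an integer of size $\|F\|^{O_{n,D}(1)}$.
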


Now we turn to the proof of Theorem \ref{thm_Serre}.
 Initially, let $m \geq 1$ be fixed. For simplicity, we assume $\|F\|\geq 3$, $B \geq 3$ in what follows. As Serre states, a formulation of the large sieve in \cite[\S12.1]{Ser97} (specialized to the case $K=\Q$) shows that 
    \beq\label{NFBQ}
    N^\cov_{\Abb^{n}}(F,B) \leq 2^n (B^{n} + Q^{2n})/L(Q)
    \eeq
    where $L(Q)$ is a sum over squarefree integers defined by
    \beq\label{LQ_dfn} L(Q) = \sum_{\substack{q \leq Q, \\ \text{squarefree}}} \prod_{p \mid q} \frac{\om_p}{1 - \om_p},
    \eeq
    in which $\om_p$ is defined by
    \beq\label{omp_dfn}
    1 - \om_p = \frac{|N_p(F)|}{p^n},\eeq 
    and 
    \[ N_p(F)  := \#\{ (x_1,\ldots,x_n) \in \F_p^n  :  \exists y \in \F_p, F(y,x_1,\ldots,x_n) =0 \; \text{in $\F_p$} \}.\]
  Choose $Q=B^{1/2}$ to balance the two main terms in (\ref{NFBQ});  to prove the proposition, it suffices to show that $L(Q) \gg Q/ \log Q$, and to quantify any dependence on $\|F\|$. 
 By positivity, we may bound $L(Q)$ below by restricting the sum over $q \leq Q$ to any convenient nonempty set of primes $p \in [Q/2, Q]$; the task is then to produce a lower bound for $\om_p$ for each prime in this set.

 We make a preliminary observation. For any prime $p$, define 
 \[ M_p(F) := \#\{ (y,x_1,\ldots,x_n) \in \F_p^{n+1}  :   F(y,x_1,\ldots,x_n) =0 \; \text{in $\F_p$} \}.\]
    By Lemma \ref{lemma_Bertini_Noether}, aside from a set $\mathcal{E}$ of at most $\ll_{n,D} \log \|F\|/\log \log \|F\|$ exceptional primes, the reduction of $F$ modulo $p$ is  irreducible over $\overline{\F}_p$. For each $p \not\in \mathcal{E}$,   the Lang-Weil bound (Lemma \ref{lemma_Lang_Weil_cor}) implies
$M_p(F) = p^n + O_{n,D}(p^{n-1/2}).$
 In particular, for all $p \gg_{n,D} 1$ with $p \not\in \mathcal{E}$, $M_p(F) \geq 1$, so that $N_p(F) \geq 1$ and hence $\frac{\om_p}{1-\om_p}$ is finite. In the main arguments that  follow, we will only consider $B$ sufficiently large  so that this lower bound holds for  a positive proportion of primes $p \in [Q/2,Q]$, when $Q=B^{1/2}$.
    
\subsection{The case $m \geq 2$} 
We first restrict our attention to the case in which $m \geq 2$ in the assumed shape (\ref{F_dfn_Serre}) of the polynomial $F$. 
We can prove a lower bound for $L(Q)$ by only considering primes $p \in[Q/2,Q]$ such that $p \con 1 \modd{m}$.
For each such prime, we now focus on proving a  lower bound for $\om_p$.  Write $N_p(F)= N_p'(F) +N_p''(F)$ where the first term is the contribution from those $(x_1,\ldots,x_n)$ such that $f_d(\bx)=0$, and the second term is those such that $f_d(\bx)\neq 0$, in which case any solution $y$ to $F(Y,x_1,\ldots,x_n)=0$ must have $y \neq 0$.  By the trivial bound (Lemma \ref{lemma_Schwartz_Zippel} over $\F_p$), $N_p'(F) \ll_{D} p^{n-1}$. 
On the other hand, if $f_d(\bx) \neq 0$, $y$ is a nonzero solution in $\F_p$ to $F(Y,x_1,\ldots,x_n)=0$   and $\ga$ is a primitive $m$-th root of unity in $\F_p$, then $y, \ga y,\ldots, \ga^{m-1} y$ provide $m$  distinct solutions to the equation. This yields the essential relation  
    $M_p(F) \geq m N_p''(F)$, with $M_p(F)$ as defined above. With the exceptional set $\mathcal{E}$ as defined above, for each $p \not\in \mathcal{E}$ with $p \con 1 \modd{m}$, 
\beq\label{NMm}
N_p''(F) \leq \frac{1}{m}M_p(F) = \frac{1}{m}(p^n + O_{n,D}(p^{n-1/2})).
    \eeq
    For such $p$, by construction 
    \[\om_p = 1-\frac{|N_p'(F)|}{p^n} -\frac{|N_p''(F)|}{p^n}\geq  1- \frac{1}{m} + O_{D}(p^{-1})+ O_{n,m,D}(p^{-1/2}) \gg_m 1 ,
    \]
    as long as $p \gg_{n,m,D} 1$.
    By the Siegel-Walfisz theorem for primes in arithmetic progressions, for all $Q \gg_m 1$ sufficiently large, 
    $\#\{ p \in [Q/2,Q]: p \con 1 \modd{m}\}\gg_m Q/\log Q$.  We can guarantee that  at least half, say, of these lie outside the finite exceptional set $\mathcal{E}$ as long as  $Q \gg_{n,m,D} (\log \|F\|)(\log \log \|F\|)$.
      Consequently, by positivity, for such $Q$,
    \begin{align*} 
    L(Q) &\geq \sum_{\substack{p \leq Q \; \text{prime}\\ p \con 1 \modd{m}}} \frac{\om_p}{1 - \om_p}
    \gg_m \sum_{\substack{p \in [Q/2,Q] \setminus \mathcal{E}\\ p \con 1 \modd{m}}}  \om_p  \\  
    &\gg_m \#\{ p \in [Q/2,Q]\setminus \mathcal{E}: p \con 1 \modd{m} \} \gg_m Q/\log Q.
    \end{align*}
  This proves the claimed lower bound for $L(Q)$ for all  $Q \gg_{n,D}  (\log \|F\|)(\log \log \|F\|)$, and hence $N^\cov_{\Abb^n}(F,B)\ll_{n,D} B^{n-1/2}\log B$ has been proved if  $B \gg_{n,D}  (\log \|F\|)^2(\log \log \|F\|)^2$, and so certainly for $B \gg_{n,D} (\log \|F\|)^3$, say.   On the other hand, for $B \ll (\log \|F\|)^3$, we may apply the trivial bound 
  $N^\cov_{\Abb^n}(F,B) \ll_n B^n \ll (\log \|F\|)^{3n}.$ In total, 
\[N^\cov_{\Abb^{n}}(F,B)\ll_{n,D} (\log \|F\|)^{c(n)} B^{n-1/2}\log B \qquad \text{for all $B \geq 1$,}
\]
 in which  $c(n)=3n$ suffices, and the theorem is proved in the case $m=2$. 
  
\subsection{The case $m=1$}\label{sec_m1}
In the case that $m=1$ we must impose that $d \geq 2$ in (\ref{F_dfn_Serre}), so the morphism $\pi: V(F(Y,X_1,\ldots,X_n)) \rightarrow (X_1,\ldots,X_n)$ has degree at least 2.   Now if $m=1$ the relation (\ref{NMm}) no longer suffices to imply $\omega_p \gg_m 1$. Instead,  the argument of Serre in \cite[\S 13.3]{Ser97} proceeds via \cite[\S 13.1 Thm. 5]{Ser97}, which we record with additional attention to uniformity:

    \begin{lem}\label{lemma_Serre_splits_completely}
  Let $F(Y,X_1,\ldots,X_n) \in \Z[Y,X_1,\ldots,X_n]$ be an absolutely irreducible polynomial of total degree $D>1$.       There is a constant $c=c(n,D)$   with $0<c <1$, an exceptional set $\calE$ with $|\calE|\ll_{n,\deg F} \log\|F\|/\log \log \|F\|$, and a   finite Galois extension $K/\Q$ such that if $p$ splits completely in $K$ and $p\not\in \calE$, then $$|N_p(F)|=\#\{\bx\in \F_p^n: \exists y \in \F_p, F(y,\bx)=0\} \leq c p^n + O_{n,D}(p^{n-1/2}).$$
  Moreover, $2 \leq [K:\Q]\ll_{n,D} 1$ and  $\log |\mathrm{Disc}K/\Q| \ll_{n,D} \log\|F\| $. 
    \end{lem}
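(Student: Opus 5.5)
The plan is to follow Serre's Chebotarev argument in \cite[\S 13.1]{Ser97}, tracking the dependence on $\|F\|$. Set $\calK=\Q(\bX)$ and $\calL=\calK[Y]/(F)$, which has degree $d=\deg_Y F\geq 2$ over $\calK$. Let $\Omega$ be the Galois closure of $\calL/\calK$, with group $G$ acting transitively on the $d$ roots, and let $H\subset G$ be a root stabilizer. Let $K$ be the algebraic closure of $\Q$ in $\Omega$; it is a finite Galois extension of $\Q$. Put $G^0=\Gal(\Omega/\calK K)$, the geometric monodromy group, which is normal in $G$.

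Since $F$ is absolutely irreducible, $\calL\otimes_{\Q}\overline{\Q}$ is a field. Hence $G^0$ is still transitive on the roots, with $|G^0|\geq d\geq 2$.

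The key inputs are as follows.

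\emph{(i) Jordan's lemma.} A transitive group $G^0$ of degree $\geq 2$ contains an element fixing no root. The proportion $c_0$ of fixed-point-free elements is positive and depends only on $|G^0|\leq D!$.

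\emph{(ii) Counting over split primes.} Take $p$ splitting completely in $K$, outside an exceptional set $\calE$. For $\bx\in\F_p^n$ outside the discriminant locus, the Frobenius at the fibre over $\bx$ lies in $G^0$, since the Frobenius acts trivially on constants. The condition $\exists y\in\F_p$ with $F(y,\bx)=0$ is equivalent to this Frobenius fixing a root. A Chebotarev density theorem over finite fields for the cover $V(F)\to\Abb^n$ then gives
\[|N_p(F)| \leq (1-c_0)p^n+O_{n,D}(p^{n-1/2}).\]
The discriminant locus contributes $O(p^{n-1})$ by Lemma \ref{lemma_Schwartz_Zippel}, since $\Delta_F\not\equiv 0$ by the argument in the proof of Lemma \ref{lemma_typeII_geometrically_integral}. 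The function-field Chebotarev step can be done as follows: apply Lang--Weil (Lemma \ref{lemma_Lang_Weil_cor}) to the absolutely irreducible components of the Galois closure cover, or count $\F_p$-points on the quotients by conjugates of subgroups and use Burnside-type inclusion--exclusion. I would present it via the quotient varieties $V_{\Omega}/S$ for subgroups $S\subset G^0$, each birational to a hypersurface of degree $\ll_{n,D}1$ through a primitive element. Take $c=1-c_0$.

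\emph{(iii) The exceptional set $\calE$.} This consists of the primes where the reduction of some such auxiliary hypersurface, or of $F$ itself, fails to be absolutely irreducible or to keep its degree. It also includes the primes where the Galois closure cover has bad reduction affecting the group. There are $O_{n,D}(1)$ auxiliary polynomials, each with height $\ll \|F\|^{O_{n,D}(1)}$, because they arise from resultants and symmetric functions of the roots. So Lemma \ref{lemma_Bertini_Noether} gives $|\calE|\ll\log\|F\|/\log\log\|F\|$.

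For the size of $K$: we have $[K:\Q]\leq |G|/|G^0|\leq D!$. If $K=\Q$, I would replace it by a quadratic field of bounded discriminant, or by the compositum with one, so that $[K:\Q]\geq 2$. Splitting completely in the larger field still implies splitting in the smaller one.

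The discriminant bound is the main obstacle. $K$ is generated by coefficients that are algebraic numbers appearing in the factorization of $F$ and its resolvents over $\overline{\Q}$. Concretely, I would specialize $\bX$ to a suitable integer point $\bx_0$ of height $\ll_{n,D}1$, chosen by Hilbert irreducibility in an effective form such as a Schwartz--Zippel avoidance argument, at which the decomposition group equals $G$. Then $K$ is contained in the splitting field of $F(Y,\bx_0)$, a polynomial of degree $\leq D$ and height $\ll\|F\|^{O(1)}$. Every prime ramified in $K$ divides $\disc F(Y,\bx_0)$, and Hensel/Dedekind-type bounds on discriminant exponents give
\[\log|\mathrm{Disc}\,K/\Q|\ll_{n,D}\log\|F\|.\]
Guaranteeing that such a $\bx_0$ exists with bounded height is the delicate point. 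If that fails, I would fall back on bounding ramification through the primes in $\calE$ together with the primes dividing a nonzero coefficient of $\Delta_F$.
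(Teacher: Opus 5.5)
Your proposal is correct, and its core is the paper's argument. Both take $K$ to be the constant field of the Galois closure $\Omega$, work at primes having a degree-one prime of $K$ above them, and discard $\ll\log\|F\|/\log\log\|F\|$ primes via Noether's lemma. Both then run Serre's Chebotarev-over-$\F_p$ plus Jordan argument, which gives $c=1-c_0$ with $c_0\ge 1/|G^0|\ge 1/D!$.

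The differences are in execution:
\begin{itemize}
\item \emph{Equidistribution step.} The paper applies Lang--Weil to $U_\Omega$, using Bombieri's bound on the number of Frobenius eigenvalues for uniformity, and defers the rest to Serre. Your Burnside-type count over the quotients $V_\Omega/S$ makes this self-contained. It works because the condition ``$\sigma$ fixes a root'' is constant on divisions: $g$ and $g^k$ with $(k,\mathrm{ord}\,g)=1$ have the same fixed points. So its indicator is a $\Q$-combination of permutation characters $1_S^{G^0}$ with $S$ cyclic, and only Lang--Weil for the bounded-degree quotients is needed. Both routes tacitly use Noether's lemma over $\mathcal{O}_K$ rather than over $\Z$ as in Lemma \ref{lemma_Bertini_Noether}; that extension is routine.
\item \emph{The bound $[K:\Q]\ge 2$.} The paper deduces it from $\deg F\ge 2$, but $K$ can equal $\Q$ (e.g.\ $F=Y^2-X_1$). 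Your padding by a quadratic field is a genuine repair.
\item \emph{The discriminant.} The paper evaluates $\mathrm{Disc}(\Omega/\Q(\bX))$ at points, bounds it through the heights of the roots, and passes to $K$ in one line. Your specialization plus Hensel's bound on discriminant exponents is a more concrete version of the same idea, once the point below is fixed.
\end{itemize}

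The step you flag as delicate is unnecessary, and your proposed way of handling it would not work. The set of $\bx_0$ whose decomposition group is smaller than $G$ is thin. In general it is not contained in any proper Zariski-closed set (for $F=Y^2-X_1$ it is the set of squares), so no Schwartz--Zippel avoidance argument can dodge it. You do not need it, for the following reason.
\begin{itemize}
\item Let $a(\bX)$ be the leading coefficient of $F$ in $Y$, and put $R=\Q[\bX][1/(a\Delta_F)]$. The ring $R[\alpha_1,\ldots,\alpha_d]\subset\Omega$ generated by the roots is a direct factor of the universal splitting algebra of $F/a$. That algebra is finite \'etale because $a\Delta_F$ is invertible in $R$.
\item Hence $R[\alpha_1,\ldots,\alpha_d]$ is the integral closure of $R$ in $\Omega$, so it contains $K$.
\item Its residue field at any point over $\bx_0$ is generated by the images of the roots, so it is the splitting field $L_{\bx_0}$ of $F(Y,\bx_0)$, and it receives an embedding of $K$.
\end{itemize}
Thus $K\hookrightarrow L_{\bx_0}$ for every $\bx_0$ with $a(\bx_0)\Delta_F(\bx_0)\neq 0$, whatever its decomposition group. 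Schwartz--Zippel gives such a point with $\|\bx_0\|\ll_{n,D}1$. Every prime ramified in $K$ then divides $a(\bx_0)\Delta_F(\bx_0)$, a nonzero integer of size $\|F\|^{O_{n,D}(1)}$, and your Hensel argument gives $\log|\mathrm{Disc}\,K|\ll_{n,D}\log\|F\|$.

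The same reasoning also makes your fallback rigorous. A prime $p>\deg(a\Delta_F)$ that ramifies in $K$ divides $a\Delta_F$ at every integer point. So $a\Delta_F$ reduced mod $p$ vanishes on $\F_p^n$ with degree $<p$ in each variable, and therefore $p$ divides every coefficient of $a\Delta_F$.
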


   \begin{rem}\label{remark_refined_log}     In fact, Serre shows one may take $c =1 - 1/r!$ where $2\leq r \leq [K:\Q]$. (In the notation of the argument presented below, $r = [K(Z):K(X_1,...,X_n)] \leq  [K:\Q]$.) 
Let $c^*=1/c-1$.
Serre presents an argument to improve the factor $\log B$ in Theorem \ref{thm_Serre} to $(\log B)^{\ga(F)}$ for  $\ga(F)=1-\lambda(F)$, in which $\lambda(F):=c^*/[K:\Q]>0$; see \cite[p. 182-183]{Ser97}. 
\end{rem}

      \begin{proof}
      We elaborate on Serre's proof in order to quantify dependence on $\|F\|$.
        Let $Z = V(F(Y,X_1,...,X_n))$ denote the vanishing set of $F$, and let $D$ denote the total degree of $F$. Then define $K(Z)$ to be the function field of $Z$; this a finite field extension of $\Q(X_1,...,X_n)$ of degree $\ll_{n,D}1$. Take $\Omega$ to be the Galois closure of $K(Z)$ over $\Q(X_1,\ldots, X_n)$. Define $K$ to be the integral closure of $\Q$ in $\Omega$ and $Z_\Omega$ the normalization of $Z$ in $\Omega$. 
         In particular, $2 \leq [K:\Q]\ll_{n,D}1$ since $\deg F\geq 2.$

        Now, we want to take an open subscheme $U\subset \A^n$ such that $U_Z/U$ and $U_{Z_\Omega}/U$ are finite and \'etale. We write $U_{Z_\Omega}$ as $U_{\Omega}$ for simplicity. In particular, viewing $F(Y,X_1,...,X_n)$ as a polynomial in $Y$, we have that $U = \{\disc(F(Y,X_1,...,X_n))\neq 0\}\subset \A^n.$ Now we note that the defining polynomial equations of $U$ will have degree bounded by $n$ and $D$. As a result, $U_Z$ and $U_\Omega$ will also be defined by polynomials of degree bounded by $n$ and $D$. Additionally, the field of constants of $U_\Omega$ is $K$, so $U_\Omega$ is absolutely irreducible over $K$. 

        Let $p$ be a prime; by Noether's Lemma \ref{lemma_Bertini_Noether}, apart from an exceptional set $\calE$ with   $|\calE|\ll_{n,D}\log\|F\|/ \log \log \|F\|$, $U_{\Omega,p} = U_{\Omega}(\F_p)$ will be absolutely irreducible. As a consequence of Lang-Weil (Lemma \ref{lemma_Lang_Weil_cor}),
        $$|U_{\Omega}(\F_p)| = p^n + O_{n,D}(p^{n-1/2}),$$
        where furthermore the explicit dependencies on $n$ and $D$ can be described by $$|U_{\Omega}(\F_p)| = \sum_{i} \pm \lambda_i(p).$$
      Here $\lambda_i(p)$ are the eigenvalues of the Frobenius map at $p$ acting on $\ell$-adic cohomology with compact support, $\lambda_i(p) = \pm p^n$ for exactly one value of $i$, $|\lambda_i(p)|\leq p^{n-1/2}$ for all other values of $i$, and the number of such eigenvalues is $\leq C(n,D)$; the constant $C(n,D)$ is described in \cite[Theorem 1]{Bom78}.
       Now, following identically the rest of the proof given by Serre,   $$\#\{\bx\in \F_p^n: \exists y \in \F_p, F(y,\bx)=0\} \leq (1- \frac{1}{[K:\Q]!}) p^n + O_{n,D}(p^{n-1/2}).$$

     Finally, we claim that $\log|\textrm{Disc}(K/\Q)|\ll_{n,D}\log \|F\|.$ To see this, consider the discriminant $\textrm{Disc}(\Omega/\Q(X_1,...,X_n))$ as an algebraic function in $X_1,...,X_n.$ If $\alpha_1(X_1,...,X_n), ..., \alpha_{D}(X_1,...,X_n)$ are the roots of $F(Y,X_1,...,X_n)$, then $\Omega = \Q(X_1,...,X_n)(\alpha_1(X_1,...,X_n),...,\alpha_{D}(X_1,...,X_n)).$ Consequently   $\textrm{Disc}(\Omega/\Q(X_1,...,X_n))$ is a polynomial of degree $\ll_{n,D} 1$ in terms of  the $\alpha_i(X_1,...,X_n)$. Additionally, these algebraic functions $\alpha_i(X_1,...,X_n)$ satisfy that for any choice of  $(x_1,...,x_n)\in \Q^n$ such that $\alpha_i(x_1,...,x_n)$ is a well-defined value in $\C$, we have that 
     \[\log \|\alpha_i(x_1,...,x_n)\|\ll_{n,\deg F} \log\|F\| +\log \|(x_1,...,x_n)\|.\] 
     (This follows from the fact that the coefficients of the polynomial $F(Y,X_1,...,X_n)$ will be symmetric polynomials of the roots $\alpha_i(X_1,...,X_n)$.) Thus for any $\bx\in \Q^n$, $$\log|\textrm{Disc}(\Omega/\Q(X_1,...,X_n))(x_1,...,x_n)| \ll_{n,D} \log\|F\| + \log\|(x_1,...,x_n)\|.$$
    Since $K$ is the integral closure of $\Q$ in $\Omega$, this implies that $\log|\textrm{Disc}(K/\Q)| \leq_{n,D} \log\|F\|.$
        
    \end{proof}

As a consequence of the lemma, for all  $p \not\in \mathcal{E}$, the quantity $\om_p$ defined in (\ref{omp_dfn}) satisfies
 \[ \om_p \geq 1-c +O_{n,D}(p^{-1/2}) \geq c' >0\] 
 where $c'=c'(n,D)$, as long as $p \gg_{n,D} 1$. Then in the expression (\ref{LQ_dfn}) for $L(Q)$, by restricting the sum to such primes, as long as $Q \gg_{n,D} 1$, 
   \beq\label{LQ_GRH} L(Q) \geq \sum_{\substack{ p \in [Q/2,Q]\setminus \mathcal{E}\\ \text{p splits completely in $K$}}} \frac{\om_p}{1 - \om_p}\geq c' \#\{ p \in [Q/2,Q]\setminus \mathcal{E}, \text{$p$ splits completely in $K$}\}.\eeq
   Let $G = \mathrm{Gal}(K/\Q)$ (in the notation of Lemma \ref{lemma_Serre_splits_completely}).
 By the Chebotarev density theorem in the form of \cite[Thm. 1.1.]{LagOdl77} as refined in \cite[Thm. 4]{Ser81}, if the Dedekind zeta function $\zeta_K(s)$ of the finite Galois extension $K/\Q$ constructed in Lemma \ref{lemma_Serre_splits_completely} satisfies the Riemann Hypothesis, then for every $Q \geq 2$,
 \beq\label{Cheb_GRH} |\# \{ p \leq Q: \text{$p$ splits completely in $K$}\}  - \frac{1}{|G|} \frac{Q}{\log Q}| \ll_{n,|G|} Q^{1/2} (\log Q + \log |\mathrm{Disc}(K/\Q)|). 
 \eeq
By Lemma \ref{lemma_Serre_splits_completely}, $|G|\ll_{n,D} 1$ and  $\log |\mathrm{Disc}(K/\Q)|\ll_{n,D} \log \|F\|.$
Thus as long as $Q \gg_{n,D} (\log \|F\|)^{2+\ep_0}$ for some $\ep_0>0$, both $|\mathcal{E}|$ and the remainder term above can be made at most $(1/4)|G|^{-1}Q/\log Q$ in absolute value. Applying this in (\ref{LQ_GRH}) shows that $L(Q) \gg_{n, D} Q/\log Q$ for $Q \gg_{n,D} (\log \|F\|)^{2+\ep_0}$. Inserting this in (\ref{NFBQ}) and recalling $Q = B^{1/2}$ shows that 
$$N^\cov_{\Abb^n}(F,B) \ll_{n,D} B^{n-1/2}\log B$$ for
 $B \gg_{n,D} (\log\|F\|)^{5}$, say. As before in the case $m \geq 2$, this estimate can be rephrased as the conclusion of Theorem \ref{thm_Serre} by including a factor of $(\log \|F\|)^{5n}$, say.

\subsection{The role of GRH}\label{remark_GRH_number_of_apps}
Let $F(Y,\bX)$ be as in (\ref{F_dfn_Serre}) for a given integer $m \geq 1$.
Both Serre's large sieve approach to bounding $N^\cov_{\Abb^n}(F,B)$ and the polynomial sieve approach that we have developed in \cite{BonPie24} and \cite{BPW25x}, depend on counting  solutions to $F(Y,\bx)$ in $\F_p$, and therefore depend on finding sufficiently many primes with a certain property. When $m=1$ (so that $F(Y,\bX)$ is simply monic in  $Y$), this introduces a role for GRH in both proofs, but in somewhat different ways.

The case of $m=1$ in Theorem \ref{thm_Serre}, as argued here, only requires assuming GRH for one Dedekind zeta function (of the  Galois extension $K/\Q$ constructed in Lemma \ref{lemma_Serre_splits_completely}).
Instead of the full strength of GRH in (\ref{Cheb_GRH}) it would suffice to know the weaker property that the right-hand side of (\ref{Cheb_GRH}) can be replaced by $c_{|G|} Q/\log Q$ for a sufficiently small effective constant $c_{|G|}$. (This is equivalent to proving effective control of the possible Siegel zero of the Dedekind zeta function $\zeta_K(s)$ in the error term of the Chebotarev density theorem in this case; see e.g. \cite[Eqn. (3.6)]{BonPie24}.
It would even be sufficient to prove this in some average sense, over primes $p \in  [Q/2,Q]$.)

In comparison, the conditional version of the polynomial sieve stated in \cite[Lemma 6.1]{BPW25x} when $m=1$ assumes that GRH holds for a large collection of Dedekind zeta functions (each associated to some $\bfk \in [-2B,2B]^n$, in the proof provided in \cite[\S 3.2]{BonPie24}). As $B \rightarrow \infty$, this formulation of the polynomial sieve assumes GRH for the Dedekind zeta functions of potentially infinitely many fields. See \cite[Remark 7.3]{BPW25x} for an alternative formulation of the conditional polynomial sieve (when $m=1$) that replaces the assumption of GRH with an average statement, also still unknown.

\section{An unconditional polylog   estimate}
\label{sec_polylog}

In this section, we consider polynomials that have  constant-leading-coefficient in $Y$. That is to say,
\[F(Y,\bX) = c_DY^D +Y^{D-1} f_{1}(\bX) +\cdots +f_D(\bX),\]
for an integer $c_D\neq 0$; this of course includes all  polynomials that are monic in $Y$. 
 We now describe an unconditional method to bound $N^\cov_{\Abb^n}(F,B)$ as in the baseline bound (\ref{NFB_nonuniform_intro}), with  at most polylog dependence on $\|F\|$.
\begin{thm}\label{thm_polylog}
Fix an integer $n \geq 1$. Let $F(Y,X_{1},...,X_{n})\in\mathbb{Z}[Y,X_{1},...,X_{n}]$ be irreducible in $\mathbb{Q}[Y,X_{1},...,X_{n}]$ of total degree $D$, with   $\deg_{Y}(F)\geq 2$, and assume that  $F$ has constant-leading-coefficient in $Y$. Then  for some $e(n) \geq 1$,
\[
N^\cov_{\Abb^n}(F,B)\ll_{n,D}  (\log (\|F\|+2))^{e(n)} B^{n-\frac{1}{2}}(\log (B+2))^{e(n)}\qquad \text{for all $B \geq 1$.}
\] 
\end{thm}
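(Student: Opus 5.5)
The plan is to fibre over all but one of the $\bX$-variables and reduce to a uniform one-parameter estimate. For each fixed $\bx'=(x_2,\ldots,x_n)\in[-B,B]^{n-1}\cap\Z^{n-1}$, consider the two-variable polynomial $G_{\bx'}(Y,X_1):=F(Y,X_1,\bx')$. The aim is to show that for "good" $\bx'$ the number of $|x_1|\le B$ with $G_{\bx'}(Y,x_1)=0$ solvable over $\Z$ is $\ll_{n,D}(\log\|F\|)^{c}B^{1/2}(\log B)^{c}$. Summing over the $\ll B^{n-1}$ good $\bx'$ then gives the main term $B^{n-1/2}$. The bad $\bx'$ are handled separately, by induction on $n$ and by Schwartz--Zippel.

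\emph{Step 1: normalization.} Since $F$ is irreducible over $\Q$, Lemma \ref{lemma_typeII_geometrically_integral} (its proof only uses constancy of the leading coefficient and the discriminant) shows that we may assume $F$ is absolutely irreducible; otherwise the bound $B^{n-1}$ already holds. If $F$ does not depend on $X_1$, permute coordinates. If $F$ depends on no $X_i$, then $F(Y)$ is irreducible of degree $\ge2$ and has no rational root, so the count is zero. After a permutation, we may therefore assume $\deg_{X_1}F\ge1$. No $\GL_n$ change with large entries is used, so $\|F\|$ and the box $[-B,B]^n$ are preserved.

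\emph{Step 2: good fibres.} Call $\bx'$ good if $G_{\bx'}$ is irreducible over $\Q$, has $\deg_Y G_{\bx'}=\deg_Y F\ge2$ (automatic, since the leading coefficient $c_D$ is constant), and depends on $X_1$. For good $\bx'$, $G_{\bx'}$ defines an affine plane curve, and any integral point with $|x_1|\le B$ has $|y|\ll_D \|F\|B^{D}$, because $c_D\ne0$ is constant. I would then invoke a uniform one-variable Hilbert-irreducibility count: for an irreducible $G(Y,T)$ with $\deg_Y G\ge2$ and constant leading coefficient,
\[
\#\{|t|\le B: G(Y,t)\text{ has an integer root}\}\ll_{\deg G}(\log(\|G\|+2))^{c}B^{1/2}(\log B)^c .
\]
This is of Walkowiak/D\`ebes type. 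It can be proved by a Bombieri--Pila/determinant-method count on the curve in the lopsided box $|t|\le B$, $|y|\ll\|G\|B^{D}$, where a degree $\ge2$ cover of the $t$-line yields exponent $1/2$. Alternatively it follows from the $n=1$ case of a large-sieve argument as in \S\ref{sec_Serre}, where the number of exceptional primes costs only polylogarithmically in $\|G\|$. Since $\|G_{\bx'}\|\ll_D\|F\|B^{D}$, we have $\log\|G_{\bx'}\|\ll\log\|F\|+\log B$, and this is the source of the factor $(\log B)^{e(n)}$.

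\emph{Step 3: bad fibres (main obstacle).} The $\bx'$ for which $G_{\bx'}$ fails to depend on $X_1$ satisfy the vanishing of some coefficient polynomial of $F$ in $X_1$ that is not identically zero. By Lemma \ref{lemma_Schwartz_Zippel} there are $\ll B^{n-2}$ such $\bx'$, contributing $\ll B^{n-1}$. The hard part is the set of $\bx'$ where $G_{\bx'}$ becomes reducible over $\Q$. This set is contained in a thin set of $\Abb^{n-1}$, but Lemma \ref{lemma_typeII_poly_interp} is inexplicit, and I need polylog control. My first attempt would be explicit. Reducibility of $G_{\bx'}$ forces either the vanishing of the discriminant of $G_{\bx'}$ in $Y$ on a Zariski-closed set of bounded degree, or a factorization into $Y$-degrees $(k,D-k)$ with coefficients polynomial in $X_1$ of bounded degree. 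Via a generic specialization $X_1=a$ with $a\ll_D 1$, this is detected by $F(Y,a,\bx')$ acquiring a factor of $Y$-degree $k$. Such a factor gives a root, over $\Q$, of the resolvent polynomial $R_k(Z,\bX')$ built from $k$-element symmetric functions of the roots of $F(Y,a,\bX')$. These resolvents have degree $\ll_D 1$, height $\ll_D \|F\|^{O(1)}$, and constant leading coefficient in $Z$. For each $\Q$-irreducible factor of $R_k$ with $\deg_Z\ge2$, the induction hypothesis in dimension $n-1$ gives $\ll(\log\|F\|)^{e(n-1)}B^{n-3/2}(\log B)^{e(n-1)}$ such $\bx'$. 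A factor of $Z$-degree $1$ would yield a $\Q(\bX)$-factorization of $F$, contradicting irreducibility, or else lies in a Schwartz--Zippel-controlled closed set. Multiplying by the $\ll B$ choices of $x_1$ gives $B^{n-1/2}$ with the right polylog factors, which closes the induction with $e(n)$ slightly larger than $e(n-1)$. The base case $n=1$ is Step 2 itself.
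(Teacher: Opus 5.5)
\textbf{Step 3 has a genuine gap: it needs a multivariable effective Hilbert irreducibility theorem to choose $a$.}

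A linear factor $Z-q(\bX')$ of $R_k$ does not give a factorization of $F$ over $\Q(\bX)$. At best it gives a factor of $Y$-degree $k$ of the \emph{specialized} polynomial $F_a(Y,\bX'):=F(Y,a,X_2,\ldots,X_n)$ over $\Q(\bX')$. Even that holds only after you choose the linear combination defining $R_k$ generically, so that $R_k$ is squarefree in $Z$.

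So the argument needs $F_a$ to be irreducible in $\Q[Y,X_2,\ldots,X_n]$. If $F_a$ has a factor of $Y$-degree $k$, then $F(Y,a,\bx')$ has a degree-$k$ factor for \emph{every} $\bx'$. Then $R_k(Z,\bx')$ always has an integral root, your bound for the bad $\bx'$ degenerates to $B^{n-1}$, and the total is the trivial $B^n$. The ``Schwartz--Zippel-controlled closed set'' alternative does not help, because this set is everything.

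Nothing in your argument produces a good $a$ with $|a|\ll_{D} 1$ uniformly in $\|F\|$. For instance, for $F=Y^2-h(X_1)X_2^2$ you need $h(a)$ not to be a rational square, which is an arithmetic condition.

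What repairs it is Theorem~\ref{thm_CDHNV} applied with $t=X_1$ and the $n$ variables $Y,X_2,\ldots,X_n$. It shows at most $\ll_{n,D}(\log(\|F\|+2))^{h(n)}A^{1/2}$ integers $a\in[-A,A]$ are bad. Hence a good $a$ exists with $|a|\ll_{n,D}(\log(\|F\|+2))^{2h(n)}$, giving $\log\|F_a\|\ll_{n,D}\log(\|F\|+2)$. With Gelfond-type height bounds for the factors of $R_k$, your induction then closes.

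\textbf{Step 2's input is misidentified.} The one-parameter count you need is exactly the case $m=1$ of Theorem~\ref{thm_CDHNV}. The large-sieve alternative does not supply it. For $G$ without the $Y^m$-structure of (\ref{F_dfn_Serre}) with $m\ge2$, the argument of \S\ref{sec_Serre} needs many primes splitting completely in an auxiliary field $K$. It gives polylogarithmic dependence on $\|G\|$ only under GRH.

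\textbf{Comparison with the paper.} Once the multivariable Theorem~\ref{thm_CDHNV} is in hand, the resolvent detour is unnecessary, and the paper's route is much shorter. It fibres over the single coordinate $x_n$ rather than over $n-1$ coordinates. Theorem~\ref{thm_CDHNV} with $t=x_n$ shows that $F(Y,X_1,\ldots,X_{n-1},x_n)$ is reducible in $\Q[Y,X_1,\ldots,X_{n-1}]$ for only $\ll_{n,D}(\log(\|F\|+2))^{h(n)}B^{1/2}$ values of $x_n$. Their fibres are counted trivially. For the remaining $x_n$ it applies the inductive hypothesis to $F_{x_n}$, with $\log\|F_{x_n}\|\ll_{n,D}\log(\|F\|+2)+\log B$.

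The key choice is to induct on the number of $\bx$ with $F(Y,\bx)$ reducible in $\Q[Y]$, rather than on the integral-root count. This count dominates $N^\cov_{\Abb^n}(F,B)$ because $\deg_YF\ge2$. Irreducible specializations then feed directly into the induction. No resolvents, auxiliary specialization points, or multi-parameter exceptional sets (your bad $\bx'\in\Z^{n-1}$) ever arise.
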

Note that to prove the theorem, we may assume that $\deg_{X_i}F(Y,\bX) \geq 1$ for some $i=1,\ldots,n$. For indeed, suppose $F(Y,\bX)$ is irreducible over $\Q$ and $\deg_Y(F) \geq 2$ while  $\deg_{X_{i}}F=0$ for all $i=1,\ldots,n$. Then $F(Y,\bx)=F(Y,\bX)$ for every $\bfx \in \Z^n$, so that $F(Y,\bfx)$ is never solvable over $\ZZ$ since by hypothesis $F$ is irreducible over $\mathbb{Q}$, and consequently $N^\cov_{\Abb^n}(F,B)=0$.

The proof of the theorem originates in \cite[Thm. 2.13]{BPW25x}, but we report it here to clarify how it inherits (at most) polylogarithmic dependence on $\|F\|$ from a recent result of Cluckers et al, on effective Hilbert's Irreducibility Theorem, obtained via the determinant method.
\begin{thm}[{\cite[Theorem 1.10]{CDHNV23x}}]\label{thm_CDHNV}
Let $F(T,X_{1},...,X_{m})\in\mathbb{Z}[T,X_{1},...,X_{m}]$ be irreducible in $\mathbb{Q}[T,X_{1},...,X_{m}]$ with  $d=\deg_{\bX}(F)\geq 1$. 
 For each integer $B \geq 1$ define
 \[ R_T(F,B) = \{ t \in \Z \cap [-B,B]: \text{$F(t,X_1,\ldots,X_m)$ is reducible in $\Q[X_1,\ldots,X_m]$}\}.\]
 Then there is a constant $h(m)>0$ depending only on $m$ such that 
 for all $B \geq 1$, 
 \[ R_T(F,B) \ll_m 2^{h(m)d}d^{h(m)}(\log (\|F\|+2))^{h(m)}B^{1/2}.
 \]
\end{thm}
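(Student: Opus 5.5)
The plan is to reduce the count of bad specializations $t$ to counting integral points on a bounded number of plane curves, each of which covers the $t$-line with degree at least $2$. I would then count those points by a uniform determinant-method bound for curves, obtaining $B^{1/2}$ with polylogarithmic dependence on the height.

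\emph{Step 1 (setup).} Since $F$ is irreducible in $\Q[T,\bX]$ and $\deg_{\bX}F\geq 1$, Gauss's lemma shows $F$ is irreducible in $\Q(T)[\bX]$. Let $L$ be a splitting-type extension of $\Q(T)$ over which $F$ factors absolutely into irreducible factors. Any factorization $F(t,\bX)=G\cdot H$ over $\Q$ with $\deg G=a$, $1\leq a<d$, then comes from the following data. Fix a monomial normalization for $G$. Write $\mathbf{c}$ for the vector of coefficients of $G$ and $\mathbf{c}'$ for those of $H$. The identity $G\cdot H=F(t,\bX)$ cuts out an affine variety $W_a\subset\Abb^1_t\times\Abb^{N_a}$ with projection $\rho_a\colon W_a\maps\Abb^1$. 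The number of such $a$ and normalizations is $\ll 2^{O(d)}$, and each $W_a$ has degree $\ll 2^{O_m(d)}d^{O_m(1)}$.

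\emph{Step 2 (no rational section).} Every component of $W_a$ that dominates $\Abb^1$ has $\rho_a$ of degree at least $2$ over $\Q$. Indeed, a degree-$1$ component would give a factorization of $F$ over $\Q(T)$, contradicting Step 1. The non-dominant components contribute $O(2^{O(d)})$ values of $t$. For each dominant component, some coordinate function $c$ (a coefficient of $G$) does not lie in $\Q(T)$. Its minimal polynomial $P(T,C)$ over $\Q(T)$ then has $\deg_C P\geq 2$, and $\deg P\ll 2^{O(d)}d^{O(1)}$.

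\emph{Step 3 (integrality and heights).} Clear denominators so that $c$ is integral over $\Z[T]$. I would multiply by a suitable power of the leading coefficient of $F$ in the normalizing monomial, and use Gauss's lemma to take $G$, $H$ with integer coefficients. By Gelfond/Mahler-measure bounds, for $|t|\leq B$ the integer $c$ satisfies $|c|\ll (\|F\|B^{d})^{O(d)}$. Moreover, $\log\|P\|\ll_m d^{O(1)}\log(\|F\|+2)$, since $P$ is obtained by elimination (resultants) from $F$. Each bad $t$ therefore yields an integral point $(t,c)$ on the plane curve $P=0$ with $|t|\leq B$.

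\emph{Step 4 (counting).} For each absolutely irreducible component $C_0$ of $P=0$ with $\deg_C\geq 2$, I would count $t\in[-B,B]$ such that $C_0$ has an integral point above $t$. Because the projection to $t$ has degree $\geq 2$ and $c$ is integral over $\Z[t]$, I would use a Bombieri–Pila/Walsh-type determinant method adapted to the lopsided box $[-B,B]\times[-B',B']$. Its count depends on the $t$-range only, giving $\ll_{\deg} B^{1/\deg_C C_0}(\log\|P\|)^{O(1)}\leq B^{1/2}(\log\|P\|)^{O(1)}$. Components that are not absolutely irreducible have only $O(\deg^2)$ rational points, by a Bézout argument with a Galois conjugate as in Lemma~\ref{lemma_typeI_geometrically_integral}. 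Summing over the $2^{O(d)}$ curves gives the stated bound.

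The main obstacle is Step 4. The $c$-coordinate is huge, polynomial in $B$ and $\|F\|$, so a naive Bombieri–Pila bound in terms of the height of $(t,c)$ is far too weak. One needs a version of the determinant method that exploits the bounded $t$-range and the integrality of $c$ over $\Z[t]$, for example by working with the $p$-adic/real determinant on the cover. It must also keep the dependence on $\|P\|$ only polylogarithmic; the cost is exponential only in $d$ through the degree of the curves.
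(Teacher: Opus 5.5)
The paper gives no proof of this statement. It quotes it from \cite{CDHNV23x}, remarking only that it is obtained via the determinant method. Your architecture is the standard D\`ebes-type reduction and is consistent with that description: you encode a factorization of $F(t,\bX)$ by the coefficients of a putative factor, use irreducibility over $\Q(T)$ to see that every dominant component covers the $t$-line with degree at least $2$, and then count integral points on the resulting plane curves.

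\emph{The missing estimate.} The proposal stops where the real work begins. Step~4 is asserted, not proved, and it is the entire content of the result. Off-the-shelf bounds do not give it.
\begin{itemize}
\item Bombieri--Pila or Walsh, applied in a box whose long side is $B'\approx(\|F\|B)^{O(d)}$, give a power of $B'$, hence a power of $\|F\|$. In general they do not give the exponent $1/2$ in $B$ either.
\item The monomial-weighted (Heath-Brown lopsided-box) form of the determinant method does use the monomial $C^{e}$ to obtain the exponent $1/e$ in $B$ alone. However, it does so only up to a loss that is a small power of $BB'$, i.e.\ of $B$ and of $\|F\|$.
\end{itemize}
To get $(\log(\|F\|+2))^{O(1)}$ instead, with no $B^{\epsilon}$, you need a determinant-method estimate in which the long side of the box enters only logarithmically. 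You identify this as the main obstacle but give no argument for it. So what your sketch actually delivers is at best a bound of shape $\|F\|^{\epsilon}B^{1/2+\epsilon}$, with constants depending on $\deg P$.

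\emph{Degree bookkeeping.} Your estimates treat $\deg_T F$ as if it were $O(d)$. In the statement, however, $d=\deg_{\bX}F$ and nothing depends on $\deg_T F$. For example, take $F=g(T)X^3+X^2-1$ with $g=\prod_{i=1}^{k}(T-i)$ and $3\nmid k$.
\begin{itemize}
\item $F$ is irreducible with $d=3$: a root in $\Q(T)$ would force $g=w^3-w$ for some $w\in\Q[T]$, which is impossible since $3\nmid\deg g$.
\item For $a=1$ and $G$ monic, the curve produced in your Step~2 is $g(T)C^3+C^2-1=0$, of degree $k+3$.
\item $F(i,X)=X^2-1$ is reducible for $i=1,\dots,k$. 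So for $B=k$ there are at least $B$ bad values of $t$, and the theorem accounts for them only through $\log\|F\|\geq\log k!$.
\end{itemize}
Hence the claims $\deg W_a,\ \deg P\ll 2^{O(d)}d^{O(1)}$ are false in general. A Step~4 constant depending on $\deg P$ would then depend on $\deg_T F$. To prove the statement as written, the curve count must have constants depending only on $m$, $d$ (or $\deg_C P$) and $\log\|P\|$. For the application in this paper, where the total degree is bounded, a $\deg_T F$-dependent constant would be harmless.

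\emph{Height of $P$.} Elimination in the $\asymp d^m$ coefficient variables of $W_a$ does not yield $\log\|P\|\ll d^{O(1)}\log\|F\|$. Instead, build $P=\prod_S(C-c_S)$ over subsets $S$ of the absolutely irreducible factors of $F$ over $\overline{\Q(T)}$. This gives $\deg_C P\leq 2^{d}$ and losses of size $2^{O(d)}$, which the statement allows.
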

 
Note that  if $\deg_T(F) =0$, then the hypothesis that $F(T,X_1,\ldots,X_m)=F(X_1,\ldots,X_m)$ is irreducible over $\Q$ is independent of specializing $T=t$ to any value $t$. In this case the set $R_T(F,B)$ is empty for all $B$, and the theorem is vacuously true. Note also that the theorem  does not require $F$ to be monic, or even constant-leading-coefficient, in $T$.
  
To deduce Theorem \ref{thm_polylog}, we start by observing that, since $\deg_{Y}(F)\geq 2$,
\begin{multline}\label{F_solvable_reducible}
\#\{\bfx\in[-B,B]^n \cap \mathbb{Z}^{n}:\text{$F(Y,\bfx)=0$ is solvable over $\Z$}\}\\
\leq \#\{\bfx\in [-B,B]^n \cap \mathbb{Z}^{n}: \text{$F(Y,\bfx)=0$ is reducible in $\Q[Y]$}\}.
\end{multline}

It thus suffices to prove:
\begin{prop}
    Fix an integer $n \geq 1$. Let $F(Y,X_{1},...,X_{n})\in\mathbb{Z}[Y,X_{1},...,X_{n}]$ be irreducible in $\mathbb{Q}[Y,X_{1},...,X_{n}]$ of total degree $D$,  with degree $\deg_{Y}(F)\geq 1$, and assume that  $F$ has constant-leading-coefficient in $Y$. 
    Then for some $e(n) \geq 1$,
    \begin{multline*}
         \#\{\bfx\in [-B,B]^n \cap \mathbb{Z}^{n}: \text{$F(Y,\bfx)=0$ is reducible in $\Q[Y]$}\} \\
         \ll_{n,D}(\log (\|F\|+2))^{e(n)}B^{n-1/2}(\log B)^{e(n)}.
             \end{multline*} 
\end{prop}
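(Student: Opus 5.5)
The plan is to reduce the count to Theorem \ref{thm_CDHNV} by fibering over one coordinate and inducting on $n$. After relabeling, we may assume $\deg_{X_1}F\geq 1$; if $F$ involves none of the $X_i$, the count is empty, as remarked after Theorem \ref{thm_polylog}. The idea is to specialize the variables $X_1,\ldots,X_n$ one at a time, tracking irreducibility at each stage.

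\textbf{Case $n=1$.} Apply Theorem \ref{thm_CDHNV} directly with $T=X_1$ and the remaining variable $Y$, so $m=1$. Since $F$ has constant-leading-coefficient in $Y$, we have $\deg_Y F(Y,x_1)=\deg_Y F$ for every $x_1$. Hence $\deg_{\mathbf{X}}$ in that theorem is $\deg_Y F\geq 1$. This gives the bound $\ll_{D}(\log(\|F\|+2))^{h(1)}B^{1/2}$.

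\textbf{General $n$.} Treat $F$ as a polynomial in $T=X_n$ with variables $(Y,X_1,\ldots,X_{n-1})$, so $m=n$. Its degree in these variables is at least $\deg_Y F\geq 1$, and irreducibility over $\Q$ is given. Theorem \ref{thm_CDHNV} shows that the set of $t\in[-B,B]$ for which $F_t:=F(Y,X_1,\ldots,X_{n-1},t)$ is reducible in $\Q[Y,X_1,\ldots,X_{n-1}]$ has size $\ll_{n,D}(\log(\|F\|+2))^{h(n)}B^{1/2}$. For each such bad $t$ we bound the inner count trivially by $(2B+1)^{n-1}$. Together these contribute $\ll (\log\|F\|)^{h(n)}B^{n-1/2}$.

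For the remaining $t$, $F_t$ is irreducible over $\Q$. It still has constant-leading-coefficient in $Y$, with the same leading coefficient $c_D$, so $\deg_Y F_t=\deg_Y F\geq 1$. It also has total degree $\leq D$ and $\|F_t\|\ll_D \|F\|B^{D}$. By the induction hypothesis in $n-1$ variables, the number of $(x_1,\ldots,x_{n-1})$ with $F_t(Y,\bx')$ reducible is $\ll_{n,D}(\log(\|F\|B^D+2))^{e(n-1)}B^{n-3/2}(\log B)^{e(n-1)}$. Summing over the $\ll B$ good values of $t$, and using $\log(\|F\|B^D)\ll_D \log\|F\|+\log B\leq 2\log(\|F\|+2)\log(B+2)$, gives
\[ \ll_{n,D} (\log(\|F\|+2))^{e(n-1)}B^{n-1/2}(\log B)^{2e(n-1)}. \]
Thus $e(n)=\max(h(n),2e(n-1))$ suffices.

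\textbf{Where the constant-leading-coefficient hypothesis is used.} This is the step I expect needs the most care. A reducible specialization in the good-$t$ regime must come from reducibility in $Y$, not from degree drop or content. Since $Y^{\deg_Y F}$ keeps the nonzero constant coefficient $c_D$, no factor independent of $Y$ can split off from any specialization. So "reducible in $\Q[Y]$" at the final stage is exactly the notion the induction tracks.

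\textbf{Degenerate intermediate stages.} At an intermediate stage $F_t$ may involve none of $X_1,\ldots,X_{n-1}$. Then $F_t$ is an irreducible polynomial in $Y$ alone, and it stays irreducible under all further specializations, so it contributes zero. If some intermediate $X_i$ does not appear, Theorem \ref{thm_CDHNV} is vacuous as noted after its statement, and the count is handled by the same argument.
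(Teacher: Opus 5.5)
Your proposal is correct and matches the paper's proof essentially step for step: induct on $n$, use Theorem \ref{thm_CDHNV} with $T=X_n$ to bound the bad specializations (counted trivially), and use the constant-leading-coefficient hypothesis to keep $\deg_Y F_{x_n}\geq 1$ so the inductive hypothesis applies to the good ones, with $\log(\|F_{x_n}\|+2)\ll \log(\|F\|+2)+\log B$. The only difference is bookkeeping: you carry the $(\log B)^{e(n-1)}$ factor explicitly in the inductive hypothesis and take $e(n)=\max(h(n),2e(n-1))$, which closes the induction more cleanly than the paper's stated hypothesis, since that hypothesis omits the $\log B$ factor.
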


Similar to various remarks above, for each $n \geq 1$ it suffices to prove this when $\deg_{\bX}F \geq 1$, since if $F$ is independent of $\bX$ the set on the left-hand side must be empty.

 If $n=1$, then we may apply  Theorem \ref{thm_CDHNV}   to $F(Y,X_1)$ with $t$ playing the role of $x_1$. (Since $F$ is constant-leading-coefficient in $Y$, observe that $\deg_Y F \geq 1$, as required.)
Consequently,
 \[ \#\{x_1\in [-B,B] \cap \mathbb{Z}: \text{$F(Y,x_1)=0$ is reducible in $\Q[Y]$}\} \ll_{D}( \log (\|F\|+2))^{h(1)}B^{1/2}.\]
 From now on, let $n \geq 2$, and we proceed by induction on the number  $n$ of variables. Let us now assume that   for any polynomial $G\in\mathbb{Z}[Y,X_{1},...,X_{n-1}]$ that is irreducible in $\mathbb{Q}[Y,X_{1},...,X_{n-1}]$ and of   $\deg_Y(G) \geq 1$ with constant-leading-coefficient in $Y$, the following inductive hypothesis holds:
  \begin{multline*}
       \#\{\bx' \in [-B,B]^{n-1} \cap \mathbb{Z}^{n-1}: \text{$G(Y,\bx')=0$ is reducible in $\Q[Y]$}\}\\
       \ll_{n,\deg G}( \log (\|G\|+2))^{e(n-1)}B^{(n-1)-1/2},
       \end{multline*}
  for some  $e(n-1)$. 
 
 Suppose $F(Y,X_1,\ldots,X_n)$ satisfies the hypotheses of the proposition. We split up the count according to those $x_n$ such that $F(Y,X_1,...,X_{n-1},x_n)$ is reducible/irreducible, and in the first case we apply Theorem \ref{thm_CDHNV} with $t$ playing the role of $x_n$. (It may be that $\deg_{X_n}F =0$, but recall that Theorem \ref{thm_CDHNV} is also applicable in this case.) This leads to:
\begin{multline*}
\#\{x_{n}\in [-B,B]\cap \Z:\text{$F(Y,X_{1},...,X_{n-1},x_{n})$ is reducible in $\Q[Y,X_1,\ldots,X_{n-1}]$}\}
    \\
    \ll_{n,D}(\log (\|F\|+2))^{h(n)} B^{\frac{1}{2}}.
\end{multline*}
Now if $F(Y,X_{1},...,X_{n-1},x_{n})$ is reducible  in $\Q[Y,X_1,\ldots,X_{n-1}]$, then  $F(Y,x_{1},...,x_{n-1},x_{n})$ is   reducible in $\Q[Y]$ for any choice of $x_{1},...,x_{n-1}$, so we count these trivially. Hence the overall contribution to (\ref{F_solvable_reducible}) of the $x_n \in [-B,B] \cap \Z$ such that $F(Y,X_{1},...,X_{n-1},x_{n})$ is reducible is bounded by 
\[  \ll_{n,D}(\log (\|F\|+2))^{h(n)} B^{(n-1)+\frac{1}{2}}
\ll_{n,D}(\log (\|F\|+2))^{h(n)} B^{n-\frac{1}{2}},\]
which is acceptable.

On the other hand, suppose $x_n \in [-B,B] \cap \Z$ is fixed and $F(Y,X_{1},...,X_{n-1},x_{n})$ is irreducible in $\Q[Y,X_1,\ldots,X_{n-1}]$. To apply the inductive step,  observe that  $\deg_YF(Y,X_{1},...,X_{n-1},x_{n}) \geq 1,$ since $F$ has constant-leading-coefficient in $Y$.
 Then by the inductive hypothesis, 
\[
\#\{\bfx'\in [-B,B]^{n-1} \cap \Z^{n-1}: \text{$F(Y,\bfx',x_{n})$ is reducible in $\Q[Y]$}\}\ll_{n, D}(\log (\|F_{x_n}\|+2))^{e(n-1)} B^{n-1-\frac{1}{2}}.
\]
Here $\|F_{x_n}\|$ denotes the norm of the polynomial $F(Y,X_1,...,X_{n-1},x_n)$ and satisfies the relation  
\[ \log(\|F_{x_n}\|+2) \ll_{n,D} \log(\|F\|+2)+\log B.\]
The number of $x_n$ which lead to this case is trivially at most $\ll B$, 
so the overall contribution from all $x_{n}$ for which $F(Y,X_{1},...,X_{n-1},x_{n})$ is irreducible in $\Q[Y,X_1,\ldots,X_{n-1}]$ is $\ll_{n, D}(\log (\|F\|+2)+\log B)^{e(n-1)} B^{n-\frac{1}{2}}$.

In total,
this argument has proved that for all $B \geq 1$, the conclusion of the proposition holds,  
with $e(n) = \max\{h(n),e(n-1)\}.$ This completes the proof of the proposition, and hence of Theorem \ref{thm_polylog}.

\section{A question about uniformity}\label{sec_uniformity_question}

Recall from \S \ref{sec_lit_typeI} that a distinctive achievement in the study of thin sets of type I is that many results are uniform; see Theorems \ref{thm_affine_hypersurface} and \ref{thm_UDGC_degneq3}.
In light of this, it is tempting to ask whether a similar phenomenon holds for thin sets of type II. That is, should a uniform version of Conjecture \ref{conj_Serre_projII} be true for projective thin sets of type II? Or, in the affine setting, should a uniform bound $N^\cov_{\Abb^n}(F,B) \ll_{n,D} B^{n-1}s(B)$ hold for all $F$ in some suitable class of polynomials, and an appropriate choice of ``small factor'' $s(B)?$   In this section, we demonstrate that the latter question appears to be subtle, and to have different features from known results in the type I setting.

\subsection{A restricted counting function}
First, it is illuminating to introduce temporarily a restricted counting function that avoids an essential feature of the type II setting.
Let $F(Y,\bX)$ be a polynomial of the form (\ref{BonPie_shape}) or (\ref{BCSSV_class}), so that it is weighted homogeneous with weights $(e,1,\ldots,1)$, or has top degree polynomial $F_\mathrm{top}$ that is weighted homogeneous with weights $(e,1,\ldots,1)$. 
Recall the counting function $N^\cov_{\Abb^n}(F,B)$ for the associated affine thin set of type II, as in (\ref{Ncov_aff_dfn}). 
Define now the restricted count 
\[ [N^\cov_{\Abb^n}]_{\mathrm{res}} (F;B^e,B)  :=\{y\in [-B^e,B^e]\cap \Z, \bfx \in [-B,B]^n\cap \Z^n:   F(y,\bfx)=0 \}.\]
 Thus this restricts to solutions with $|y|\leq B^e$, in contrast to the count $N^\cov_{\Abb^n}(F,B)$, which \emph{a priori} allows $|y| \ll \|F\| B^e$.  In this sense, the restricted count attempts to mimic the type I setting, in which all variables are governed only by the parameter $B$.
 
 The methods of \cite{BonPie24} can be applied to prove  
\beq\label{res_strength} 
[N^\cov_{\Abb^n}]_{\mathrm{res}} (F;B^e,B)\ll_{n,m,d,e}B^{n-1+1/(n+1)}(\log B)^{n/(n+1)},
\eeq
with implicit constant independent of $\|F\|$. But for the original count $N^\cov_{\Abb^n}(F,B)$, polylogarithmic dependence on $\|F\|$ cannot be ruled out by those methods; this is explained in the correction \cite{BonPie24cor}. 

The restricted count can nevertheless be a useful tool for studying the original count, but at the cost of introducing 
polynomial dependence on $\|F\|$. Indeed, for any polynomial 
 $F(Y,\bX)$ of the class (\ref{BCSSV_class}), \cite[Thm. 1.3]{BCSSV25x} proved that for some absolute constant $\ga$,
 \[ 
[N^\cov_{\Abb^n}]_{\mathrm{res}} (F;B^e,B) \ll_{n,d}  \begin{cases} B^{n-1} & \text{if $d \geq 5$}\\
B^{n-1} (\log B)^{\ga} & \text{if $d =4$}\\
B^{n-1+(2/\sqrt{d}-1)}(\log B)^{\ga} & \text{if $d =2,3$,}
\end{cases}\] 
  with implicit constant independent of $\|F\|$. There exists   a combinatorial constant $c_{n,d,e}$ such that  
\beq\label{restricted_pass_to_original} 
N^\cov_{\Abb^n}(F,B) \ll [N^\cov_{\Abb^n}]_{\mathrm{res}}(F;c_{n,d,e}\|F\|B^e,B).
\eeq
Therefore, the uniform bound for the restricted count immediately implies   the bound (\ref{BCSSV_affine}) for the original count, with polynomial dependence on $\|F\|$. To achieve at most polylogarithmic dependence on $\|F\|$ in a bound for $N^\cov_{\Abb^n}(F,B)$, different methods must be employed, such as in \cite{BPW25x}.

  \subsection{Examples that fail certain uniform upper bounds}
We now demonstrate via counterexamples that a particular shape of conjectural uniform bound for $N^\cov_{\Abb^n}(F,B)$ for affine thin sets of type II can be false; this requires specifying the small factor $s(B)$ as well. (For clarity, we note that \cite{Ser97} did not address uniformity questions, and there is no contradiction to statements there.)

\begin{example}[Sums of two squares]
    Consider the affine thin set $M_k$ defined for a given integer $k$ by $$M_k = \{x_1 \in \Z: \exists y \in \Z: y^2 + x_1^2 - k = 0\} \subset \A^1(\Z).$$
    For $B\geq k^{1/2}$, 
    $$N_{\Abb^1}(M_k,B) = \tfrac{1}{2} r(k),$$
    where $r(k)$ is the number of representations of $k$ as the sum of two squares. If $k$ is squarefree, we can compute $r(k)$ explicitly as 
    \beq\label{rk_reps_sums_of_squares} 
    r(k) = 4 \cdot 2^{\om(k)},
    \eeq
    if every prime divisor $p|k$ has $p \con 1 \modd{4}$, and $r(k)=0$ otherwise; see 
    for example \cite[Theorem 2.5]{MorWag06}.
    In particular, if we define $$k = \prod_{\substack{p\leq \log B \\ p\equiv 1 \bmod 4}} p ,$$
    then note that $k \sim B^{1/2}$ as $B \maps \infty$, and $\om(k) \sim \tfrac{1}{2} \log B/\log \log B$, by the Siegel-Walfisz theorem. 
   Consequently we can calculate that as $B \maps \infty,$
    \begin{equation}\label{eq: counter ex uniform Serre}N_{\Abb^1}(M_k,B) = 2^{\frac{\log B}{\log\log B}(1/2+o(1))}.\end{equation}
 A uniform version of Conjecture \ref{conj_Serre_projII} for thin sets of type II in $\Abb^1$ with polylogarithmic small factor, say $s(B) \ll (\log B)^c$, would ask whether for some constant $c$,
 $$N_{\Abb^1}(M_k,B) = O((\log B)^c)$$
    holds independent of the choice of $k$. This fails as   $B \maps \infty$, by (\ref{eq: counter ex uniform Serre}). However, we observe that a putative uniform bound of the form $$N_{\Abb^1}(M_k,B) = O_{\ep}(B^{\ep})$$
    is not violated by this example. 
\end{example}
\begin{example}[A multi-dimensional generalization]
    Define for a given integer $k$ the affine thin set 
    $$M_k = \{(x_1,\ldots,x_n)\in \Z^n: y^2 + x_1^2 - k(x_2+\ldots+x_n)=0\}.$$
    (Note that in the terminology of \cite{BPW25x}, while $F_k(y,x_1,\ldots,x_n):=y^2 +x_1^2 - k(x_2+\ldots+x_n)$ is a polynomial in $n+1$ variables, it is not a $(1,n)$-allowable polynomial.) 
    For $B\gg k$, we see that 
    $$N_{\Abb^n}(M_k,B) \gg \sum_{(x_2,\ldots,x_n) \in [-B,B]^{n-1}} r(k(x_2+\ldots+x_n))\gg B^{n-2} \sum_{|z|\ll B} r(kz).$$
    Here $r(k)$ again denotes the number of representations of $k$ as the sum of two squares.
    Now, $r(k)$ is not a totally multiplicative function, but it does satisfy that 
    $$r(m_1m_2) = \sum_{d\mid \gcd(m_1,m_2)}\mu(d)\chi(d) r(m_1/d)r(m_2/d),$$
    where $\chi$ is the nontrivial quadratic character mod 4 (see \cite[Section 6]{HB03}; this is also an instance of a Busche-Ramanujan identity). Thus, \begin{equation}\label{eq: uniformity counter-ex multi dimensional}\sum_{|z|\ll B} r(kz) = \sum_{d\mid k} \mu(d)\chi(d)r(k/d)\sum_{\substack{|z|\ll B\\ d\mid z}} r(z/d).\end{equation}
  By counting the number of lattice points in a circle of radius $X^{1/2}$, 
    $$\sum_{x\leq X} r(x) = \pi X + O(X^{1/2}).$$
   Plugging the above into (\ref{eq: uniformity counter-ex multi dimensional}), we have that 
    \beq\label{Gauss_circle_remainder}
    \sum_{|z|\ll B} r(kz) = \pi B \sum_{d\mid k} \frac{\mu(d)\chi(d)r(k/d)}{d}  + O(B^{1/2}\sum_{d\mid k} r(k/d)d^{-1/2}).
    \eeq
    Now define $k$ to be 
    $$k = \prod_{\substack{p\equiv 1 \bmod 4\\ \log B/2 \leq p\leq \log B}}p.$$
    Then $k \sim  B^{1/4}$ and $\om(k) \sim \tfrac{1}{4}\log B/ \log \log B$, by the Siegel-Walfisz theorem. Moreover, (\ref{rk_reps_sums_of_squares}) shows that $r(k/d) \leq r(k)$ for each $d|k$. We also observe for future reference that $\sum_{p|k}1/p \ll (\log B)^{-1} \om(k) \ll (\log \log B)^{-1}$.
    
  We may rewrite (\ref{Gauss_circle_remainder})  as
    \[  \sum_{|z|\ll B} r(kz)  = \pi B r(k) +E_1 + E_2\]
    in which 
    \begin{align*} 
   | E_1 |& \ll  B \sum_{\substack{d\mid k\\ d>1}} \frac{r(k/d)}{d} ,
    \\
    |E_2| & \ll r(k)B^{1/2}\sum_{d\mid k}  d^{-1/2}  \ll r(k) d(k) B^{1/2},
    \end{align*}
     denoting the divisor function by $d(k)$.
  By our previous observation,
    \begin{align*}|E_1| & \ll Br(k)\left(\sum_{p\mid k} \frac{1}{p} + \sum_{p_1p_2\mid k} \frac{1}{p_1p_2}+\hdots \right) \\ 
    & \ll Br(k) \left(\frac{1}{\log\log  B}+ \frac{1}{(\log\log B)^2}+\hdots \right)
    \ll Br(k) \left(\frac{1}{\log\log  B-1} \right).
    \end{align*}
    Additionally, since $k$ is squarefree so that $d(k) \ll 2^{\om(k)} \ll B^{1/ \log \log B} \ll B^{1/4}$, say, we can write $|E_2| \ll B r(k) \cdot B^{-1/4}$.
    Consequently,
      \[  \sum_{|z|\ll B} r(kz)  = \pi B r(k)(1+ O(( \log \log B)^{-1}))\] 
      as $B \maps \infty$. 
     This shows that
   $$N_{\Abb^n}(M_k,B) \gg B^{n-1} r(k)(\pi + o(1)) \gg 2^{\frac{\log B}{\log\log B}(1/4+o(1))} B^{n-1}.$$
    This example demonstrates that for any $c$, a bound of the form $$N_{\Abb^n}(M_k,B) = O_{n,\deg(M_k)}(B^{n-1}(\log B)^{c})$$
    that is independent of $k$, fails as $B \maps \infty$. It does not violate a putative bound of the form $$N_{\Abb^n}(M_k,B) = O_{n,\deg(M_k),\ep}(B^{n-1+\ep}).$$

\end{example}
These counterexamples have demonstrated Theorem \ref{thm_counterex}.

 \subsection*{Acknowledgements}
 
The authors thank Tim Browning and Raf Cluckers for helpful references, and detailed comments on an earlier draft. L.P.  has been partially supported during portions of this research by NSF DMS-2200470, a Joan and Joseph Birman Fellowship, a Simons Fellowship, and a Guggenheim Fellowship, and thanks the Hausdorff Center for Mathematics for hosting several research periods as a Bonn Research Chair; the Mittag-Leffler Institute for hosting a research period in 2024; Rhodes House in 2025. 
K.W. visited Duke in 2023 with funding from NSF RTG-2231514, which supports the Number Theory group at Duke University. K.W. is partially supported by NSF under DGE-2039656 and DMS-2502864.

\bibliographystyle{alpha}

\bibliography{NoThBib}

\newcommand{\etalchar}[1]{$^{#1}$}
\begin{thebibliography}{CDH{\etalchar{+}}23}

\bibitem[BCK25]{BCK25}
G.~Binyamini, R.~Cluckers, and F.~Kato.
\newblock Sharp bounds for the number of rational points on algebraic curves
  and dimension growth, over all global fields.
\newblock {\em Proc. Lond. Math. Soc. (3)}, 130(1):Paper No. e70016, 20, 2025.

\bibitem[BCLP23]{BCLP23}
A.~Bucur, A.~C. Cojocaru, M.~N. Lal\'{i}n, and L.~B. Pierce.
\newblock Geometric generalizations of the square sieve, with an application to
  cyclic covers.
\newblock {\em Mathematika}, 69:106--154, 2023.

\bibitem[BCN24]{BCN24}
G.~Binyamini, R.~Cluckers, and D.~Novikov.
\newblock Bounds for rational points on algebraic curves, optimal in the
  degree, and dimension growth.
\newblock {\em Int. Math. Res. Not. IMRN}, (11):9256--9265, 2024.

\bibitem[BCS{\etalchar{+}}25]{BCSSV25x}
T.~Buggenhout, R.~Cluckers, P.~Salberger, T.~Santens, and F.~Vermeulen.
\newblock Serre's question on thin sets in projective space.
\newblock (preprint arXiv:2506.13471) 2025.

\bibitem[BHB05]{BroHB05}
T.~D. Browning and D.~R. Heath-Brown.
\newblock Counting rational points on hypersurfaces.
\newblock {\em Journal f\"{u}r die reine und angewandte Mathematik},
  2005(584):83--115, 2005.

\bibitem[BHB06a]{BroHB06a}
T.~D. Browning and D.~R. Heath-Brown.
\newblock The density of rational points on non-singular hypersurfaces, {I}.
\newblock {\em Bull. London Math. Soc.}, 38(3):401--410, 2006.

\bibitem[BHB06b]{BroHB06b}
T.~D. Browning and D.~R. Heath-Brown.
\newblock The density of rational points on non-singular hypersurfaces, {II}.
\newblock {\em Proc. London Math. Soc. (3)}, 93(2):273--303, 2006.
\newblock With an appendix by J. M. Starr.

\bibitem[BHBS06]{BHBS06}
T.~D. Browning, D.~R. Heath-Brown, and P.~Salberger.
\newblock Counting rational points on algebraic varieties.
\newblock {\em Duke Math. Journal}, 132:545--578, 2006.

\bibitem[BKW25]{BKW25x}
D.~Bonolis, E.~Kowalski, and K.~Woo.
\newblock Stratification for exponential sums, revisited. {With an appendix by
  A. Forey, J. Fres\'{a}n, and E. Kowalski}.
\newblock (preprint, arXiv:2506.18299) 2025.

\bibitem[Bom78]{Bom78}
E.~Bombieri.
\newblock On exponential sums in finite fields. {II}.
\newblock {\em Invent. Math.}, 47(1):29--39, 1978.

\bibitem[Bon21]{Bon21}
D.~Bonolis.
\newblock A polynomial sieve and sums of {D}eligne type.
\newblock {\em Int. Math. Res. Not. IMRN}, (2):1096--1137, 2021.

\bibitem[BP89]{BomPil89}
E.~Bombieri and J.~Pila.
\newblock The number of integral points on arcs and ovals.
\newblock {\em Duke Math. J.}, 59(2):337--357, 1989.

\bibitem[BP24]{BonPie24}
D.~Bonolis and L.~B. Pierce.
\newblock Application of a polynomial sieve: beyond separation of variables.
\newblock {\em Algebra \& Number Theory}, 18(8):1515--1556, 2024.

\bibitem[BP26]{BonPie24cor}
D.~Bonolis and L.~B. Pierce.
\newblock Correction to ``{A}pplication of a polynomial sieve: beyond
  separation of variables''.
\newblock {\em Algebra \& Number Theory (accepted)}, arXiv:2209.02494
  (appended), 2026.

\bibitem[BPW25]{BPW25x}
D.~Bonolis, L.~B. Pierce, and K.~Woo.
\newblock Counting integral points in thin sets of type {II}: singularities,
  sieves, and stratification.
\newblock {\em (preprint, arXiv:2505.11226)}, 2025.

\bibitem[BPW26]{BPW25x_gen}
D.~Bonolis, L.~B. Pierce, and K.~Woo.
\newblock Genuine and strongly genuine polynomials: with an application to the
  persistence of {G}alois groups under specialization.
\newblock (preprint) 2026.

\bibitem[Bro03a]{Bro03N}
N.~Broberg.
\newblock Rational points on finite covers of {$\mathbb{P}^1$} and
  {$\mathbb{P}^2$}.
\newblock {\em J. Number Theory}, 101(1):195--207, 2003.

\bibitem[Bro03b]{Bro03a}
T.~D. Browning.
\newblock A note on the distribution of rational points on threefolds.
\newblock {\em Q. J. Math.}, 54(1):33--39, 2003.

\bibitem[CCDN20]{CCDN20}
W.~Castryck, R.~Cluckers, P.~Dittmann, and K.~H. Nguyen.
\newblock The dimension growth conjecture, polynomial in the degree and without
  logarithmic factors.
\newblock {\em Algebra \& Number Theory}, 14(8):2261--2294, 2020.

\bibitem[CDH{\etalchar{+}}23]{CDHNV23x}
R.~Cluckers, P.~D\`{e}bes, Y.~Hendel, K.~H. Nguyen, and F.~Vermeulen.
\newblock Improvements on {Dimension Growth results and effective Hilbert's
  Irreducibility Theorem}.
\newblock {\em arXiv:2311.16871}, 2023.

\bibitem[CG25]{CluGla25}
R.~Cluckers and I.~Glazer.
\newblock Eventual tightness of projective dimension growth bounds: quadratic
  in the degree.
\newblock {\em J. Number Theory}, 276:72--80, 2025.

\bibitem[Coh81]{Coh81}
S.~D. Cohen.
\newblock The distribution of {G}alois groups and {H}ilbert's irreducibility
  theorem.
\newblock {\em Proc. London Math. Soc. (3)}, 43(2):227--250, 1981.

\bibitem[Deh26]{Deh26x}
L.~Dehennin.
\newblock Improved, sublinear projective {Schwartz-Zippel} and subquadratic
  dimension growth bounds.
\newblock arXiv:2602.23942, 2026.

\bibitem[EH16]{EisHar16}
D.~Eisenbud and J.~Harris.
\newblock {\em 3264 and all that---a second course in algebraic geometry}.
\newblock Cambridge University Press, Cambridge, 2016.

\bibitem[GKZ08]{GKZ08}
I.~M. Gelfand, M.~M. Kapranov, and A.~V. Zelevinsky.
\newblock {\em Discriminants, Resultants and Multidimensional Determinants}.
\newblock Modern Birkh\"{a}user Classics. Birkh\"{a}user Boston, Inc., Boston,
  MA, 2008.
\newblock Reprint of the 1994 edition.

\bibitem[Gro66]{EGAIV_part3}
A.~Grothendieck.
\newblock {EGA IV, \'El\'ements de g\'eom\'etrie alg\'ebrique : {IV.} {\'Etude}
  locale des sch\'emas et des morphismes de sch\'emas, {Troisi\`eme} partie}.
\newblock {\em Publications Math\'ematiques de l'IH\'ES}, 28:5--255, 1966.

\bibitem[Har77]{Har77}
R.~Hartshorne.
\newblock {\em Algebraic Geometry}.
\newblock Springer-Verlag, New York-Heidelberg, 1977.
\newblock Graduate Texts in Mathematics, No. 52.

\bibitem[HB83]{HB83}
D.~R. Heath-Brown.
\newblock Cubic forms in ten variables.
\newblock {\em Proc. London Math. Soc.}, 47:225--257, 1983.

\bibitem[HB94]{HB94}
D.~R. Heath-Brown.
\newblock The density of rational points on non-singular hypersurfaces.
\newblock {\em Proc. Indian Acad. Sci. (Math. Sci.)}, 104:13--29, 1994.

\bibitem[HB02]{HB02}
D.~R. Heath-Brown.
\newblock The density of rational points on curves and surfaces.
\newblock {\em Ann. of Math.}, 155:553--595, 2002.

\bibitem[HB03]{HB03}
D.~R. Heath-Brown.
\newblock Linear relations amongst sums of two squares.
\newblock In {\em Number theory and algebraic geometry}, volume 303 of {\em
  London Math. Soc. Lecture Note Ser.}, pages 133--176. Cambridge Univ. Press,
  Cambridge, 2003.

\bibitem[HBP12]{HBPie12}
D.~R. Heath-Brown and L.~B. Pierce.
\newblock Counting rational points on smooth cyclic covers.
\newblock {\em J. Number Theory}, 132(8):1741--1757, 2012.

\bibitem[Lan83]{Lan83}
S.~Lang.
\newblock {\em Fundamentals of {D}iophantine {G}eometry}.
\newblock Springer-Verlag, New York, 1983.

\bibitem[LO77]{LagOdl77}
J.~C. Lagarias and A.~M. Odlyzko.
\newblock Effective versions of the {C}hebotarev density theorem.
\newblock In {\em Algebraic number fields: {$L$}-functions and {G}alois
  properties ({P}roc. {S}ympos., {U}niv. {D}urham, {D}urham, 1975)}, pages
  409--464. Academic Press, London, 1977.

\bibitem[Mun09]{Mun09}
R.~Munshi.
\newblock Density of rational points on cyclic covers of {$\mathbb{P}^n$}.
\newblock {\em Journal de Th\'{e}orie des Nombres de Bordeaux}, 21:335--341,
  2009.

\bibitem[MW06]{MorWag06}
C.~Moreno and S.~Wagstaff.
\newblock {\em Sums of Squares of Integers}.
\newblock Discrete Mathematics and Its Applications. Chapman and Hall/CRC, Boca
  Raton, London, New York, 2006.

\bibitem[Pil95]{Pil95}
J.~Pila.
\newblock Density of integral and rational points on varieties.
\newblock In {\em Columbia University Number Theory Seminar (New York, 1992)},
  volume 228 of {\em Ast\'{e}risque}, pages 183--187. Soc. Math. France,
  Montrouge, 1995.

\bibitem[Rei88]{Rei88}
M.~Reid.
\newblock {\em Undergraduate Algebraic Geometry}, volume~12 of {\em London
  Mathematical Society Student Texts}.
\newblock Cambridge University Press, Cambridge, 1988.

\bibitem[Sal15]{Sal15}
P.~Salberger.
\newblock Uniform bounds for rational points on cubic hypersurfaces.
\newblock In {\em Arithmetic and Geometry}, London Mathematical Society Lecture
  Note Series, page 401–421. Cambridge University Press, 2015.

\bibitem[Sal23]{Sal23}
P.~Salberger.
\newblock Counting rational points on projective varieties.
\newblock {\em Proc. Lond. Math. Soc. (3)}, 126(4):1092--1133, 2023.

\bibitem[Ser81]{Ser81}
J.-P. Serre.
\newblock Quelques applications du th\'{e}or\`eme de densit\'{e} de
  {C}hebotarev.
\newblock {\em Inst. Hautes \'{E}tudes Sci. Publ. Math.}, (54):323--401, 1981.

\bibitem[Ser92]{Ser92}
J.-P. Serre.
\newblock {\em Topics in {G}alois theory}, volume~1 of {\em Research Notes in
  Mathematics}.
\newblock Jones and Bartlett Publishers, Boston, MA, 1992.
\newblock Lecture notes prepared by H. Darmon.

\bibitem[Ser97]{Ser97}
J.-P. Serre.
\newblock {\em Lectures on the {M}ordell-{W}eil theorem}.
\newblock Aspects of Mathematics. Friedr. Vieweg \& Sohn, Braunschweig, 3rd
  edition, 1997.
\newblock Translated from the French and edited by M. Brown from notes by M.
  Waldschmidt.

\bibitem[Sha13]{Sha13}
I.~R. Shafarevich.
\newblock {\em Basic Algebraic Geometry I}.
\newblock Springer, Heidelberg, 3rd edition, 2013.
\newblock Varieties in projective space.

\bibitem[{Sta}26]{StaPro}
The {Stacks project authors}.
\newblock The stacks project.
\newblock \url{https://stacks.math.columbia.edu}, 2026.

\bibitem[Ver24]{Ver24}
F.~Vermeulen.
\newblock Dimension growth for affine varieties.
\newblock {\em Int. Math. Res. Not. IMRN}, (15):11464--11483, 2024.

\bibitem[Wal15]{Wal15}
M.~N. Walsh.
\newblock Bounded rational points on curves.
\newblock {\em Int. Math. Res. Not. IMRN}, (14):5644--5658, 2015.

\end{thebibliography}

\end{document}